\numberwithin{equation}{section}
\newtheorem{thma}{Theorem}[section]
\newtheorem{lemma}[thma]{Lemma}
\newtheorem{prop}[thma]{Proposition}
\newtheorem{claim}{Claim}
\newtheorem{remark}{Remark}
\newcommand{\eps}{\varepsilon}
\renewcommand{\mod}{\;\text{mod}\;}
\begin{document}

\title{Airy point process at the liquid-gas boundary}

\author{Vincent Beffara, Sunil Chhita and Kurt Johansson} 

\thanks{V.B. and S.C. gratefully acknowledge the support of the German
  Research  Foundation in  SFB 1060-B04  ``The Mathematics  of Emergent
  Effects''. K.J.  gratefully acknowledge the  support of the  Knut and
  Alice Wallenberg Foundation grant KAW:2010.0063. }

\maketitle
\begin{abstract}

Domino tilings of the two-periodic Aztec diamond feature all of the three possible types of phases of random tiling models. These phases are determined by the decay of correlations between dominoes and are generally known as solid, liquid and gas. The liquid-solid boundary is easy to define microscopically and is known in many models to be described by the Airy process in the limit of a large random tiling. The liquid-gas boundary has no obvious microscopic description. Using the height function we define a random measure in the two-periodic Aztec diamond designed to detect the long range correlations visible at the liquid-gas boundary. We prove that this random measure converges to the extended Airy point process. This indicates that, in a sense, the liquid-gas boundary should also be described by the Airy process.

\end{abstract}
\tableofcontents

\section{Introduction} \label{sec:intro}

\subsection{The two-periodic Aztec diamond and random tilings} \label{sec:intro:overview}

An \emph{Aztec diamond graph of size $n$} is a bipartite graph which contains white vertices given by
\begin{equation}
\mathtt{W}= \{(i,j): i \mod 2=1, j \mod 2=0, 1 \leq i \leq 2n-1, 0 \leq j \leq 2n\}
\end{equation}
and black vertices given by
\begin{equation}
\mathtt{B}= \{(i,j): i \mod 2=0, j \mod 2=1, 0 \leq i \leq 2n, 1\leq j \leq 2n-1\}.
\end{equation}
The edges of the Aztec diamond graph are given by $\mathtt{b} -\mathtt{w}= \pm e_1,\pm e_2$ for $\mathtt{b} \in \mathtt{B}$ and $\mathtt{w} \in \mathtt{W}$, where $e_1=(1,1)$ and $e_2=(-1,1)$.  The coordinate of a face in the graph is defined to be the coordinate of its center.
For an Aztec diamond  graph of size $n=4m$ with $m \in \mathbb{N}$, define the \emph{two-periodic Aztec diamond} to be an Aztec diamond graph with edge  weights $a$ for all edges incident to the faces $(i,j)$ with  $(i+j)\mod 4=2$ and edge weights $b$ for all the edges incident to the faces $(i,j)$ with  $(i+j) \mod 4=0$; see the left figure in Fig.~\ref{fig:weights}. We call the faces $(i,j)$ with $(i+j)\mod 4=2$ to be the \emph{$a$-faces}. 
  For the purpose of  this paper, we set $b=1$; this  incurs no loss of
  generality, since multiplying both $a$  and $b$ by the same constant
  does not change the model that we consider.

\begin{center}
\begin{figure}
\includegraphics[height=3in]{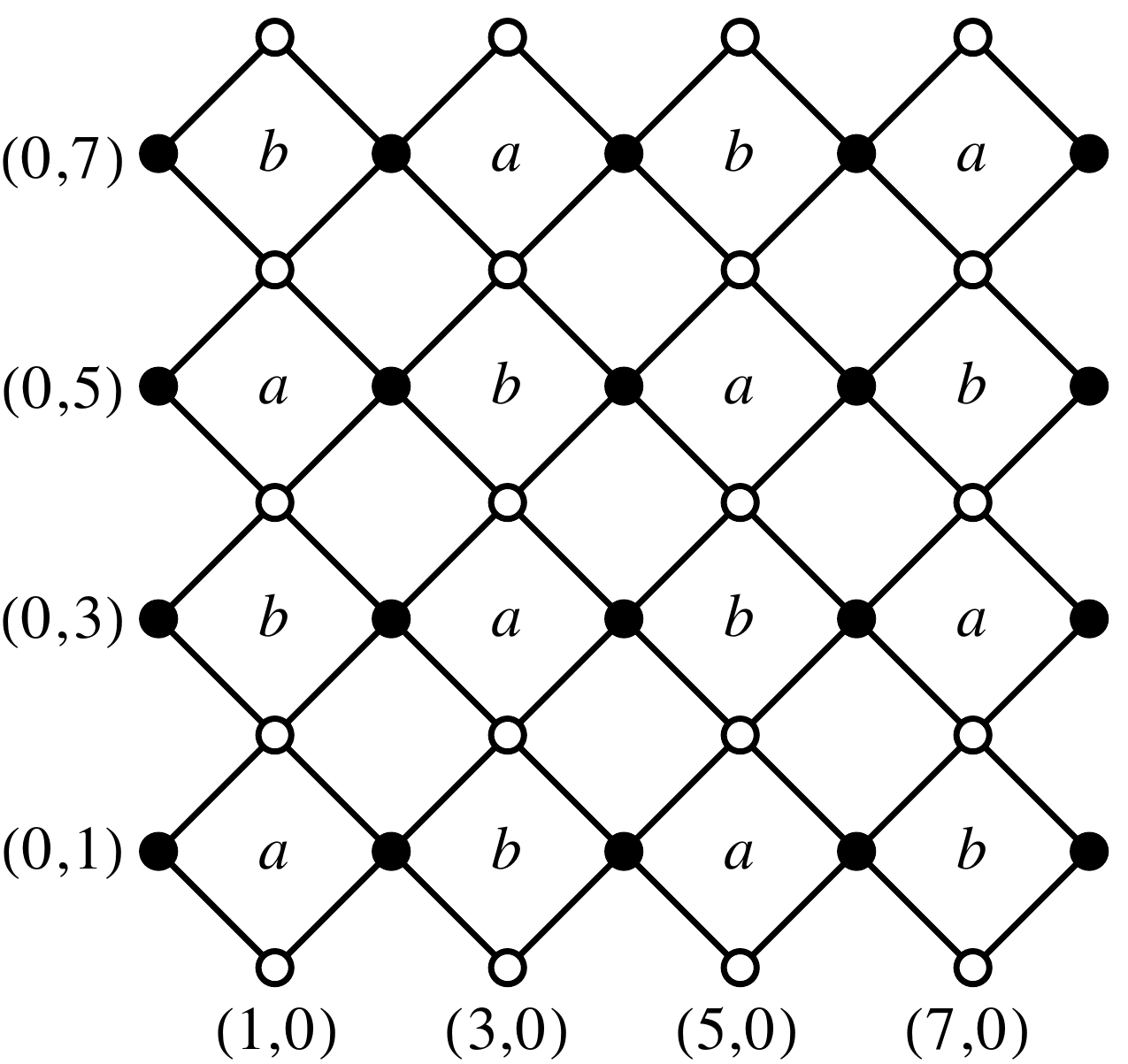}
\hspace{2mm}
\includegraphics[height=3in]{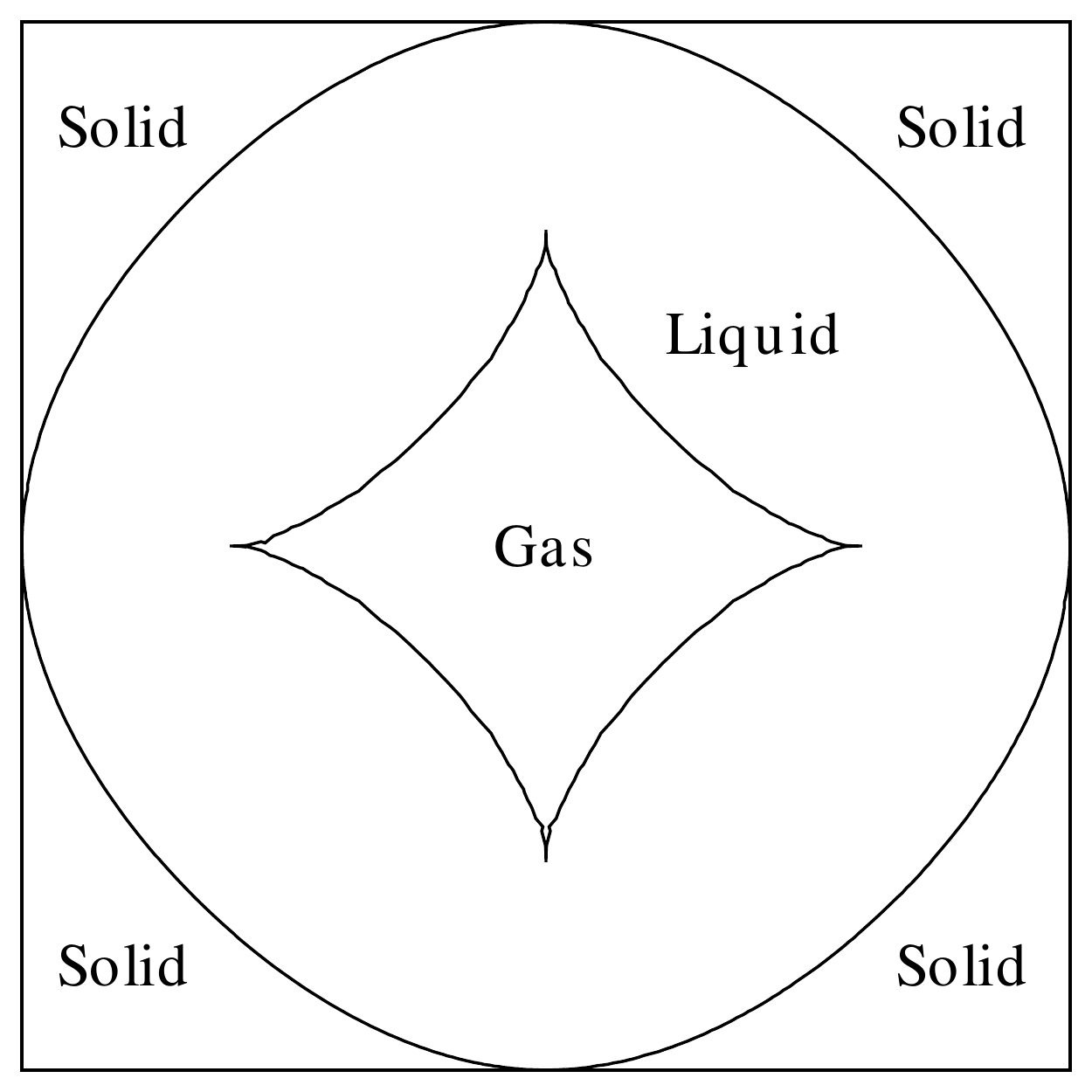}
\caption{The left figure shows the two-periodic Aztec diamond graph for $n=4$ with the edges weights  given by $a$ (or $b$) if the edge is incident to a face marked $a$ (respectively $b$). The right figure shows the limit shape when $a=0.5$ and $b=1$. See~\cite{KO:07, FSG:14, CJ:16} for the explicit curve.}
\label{fig:weights}
\end{figure} 
\end{center}

A \emph{dimer} is an edge and a \emph{dimer covering} is a subset of edges so that each vertex is incident to exactly one edge. Each dimer covering of the two-periodic Aztec diamond is picked with probability proportional to the product of  its edge weights. For the two-periodic Aztec diamond, it is immediate that each dimer covering is picked uniformly at random when $a=1$.  

An equivalent notion to a dimer covering is a domino tiling, where one replaces each dimer by a domino. Each dimer is  the graph theoretic dual of a domino. Simulations of domino tilings of large bounded regions exhibit interesting features due to the emergence of a  \emph{limit shape}. Here, the tiling \emph{separates} into distinct macroscopic regions: \emph{solid}, where the configurations of dominoes are deterministic; \emph{liquid}, where the correlation between dominoes have polynomial decay in distance; and \emph{gas}  where the dominoes have exponential decay of correlations. 
These phases are characterized in~\cite{KOS:06} but first noticed in~\cite{NHB:84}.
Even though their names may suggest otherwise, these regions are not associated with physical states of matter.   An alternate convention is to say that the solid region is the \emph{frozen} region while the liquid and gas regions are the \emph{unfrozen} regions.  The liquid region is then referred to as  the \emph{rough unfrozen} region (or simply rough region) and the gas region is referred to as the  \emph{smooth unfrozen} region.


Considerable research attention has been directed to tiling models, including domino tilings and \emph{lozenge tilings}, on bounded regions whose limit shapes contain both solid and liquid regions, but no gaseous regions.  The primary reason behind this attention lies in the fact that in several cases these models are mathematically tractable due to direct connections  with algebraic combinatorics through the so-called \emph{Schur processes}~\cite{OR:03}.  
 By exploiting the algebraic structure via the so-called Schur generating functions, it is possible to find the limit shape, and the local and global bulk limiting behaviors in several cases; see the recent articles~\cite{BK:16,  Gor:16, Pan:15}. 
 More computational approaches are also used to find these asymptotic quantities as well as  the limiting edge behavior. 
 These approaches often use in an essential way that the dimers, or some associated particles, form a determinantal process with an explicit correlation kernel. 
Finding this correlation kernel is not a simple task in general, but successful techniques  have come from applying the Karlin-McGregor Lindstr\"{o}m Gessel Viennot matrix and the Eynard Mehta theorem, or using vertex operators; see~\cite{BR:05} for the former and~\cite{BCC:14, BBCCR:15} for a recent exposition of the latter. The limiting correlation kernels that appear are often the same, or related to, those that occur in random matrix theory. Indeed, in~\cite{FS:03, Joh:05,OR:03, Pet:14}, the limiting behaviour of the random curve separating the solid and liquid regions is described by the \emph{Airy process}, a universal probability distribution first appearing in~\cite{PS:02} in connection with random growth models.

Domino tilings of the two-periodic Aztec diamond do not belong to the Schur process class. In fact the two techniques mentioned above fail (at least for us) for this model.  However, in~\cite{CY:13}, the authors derive a formula for the correlations of dominoes for the two-periodic Aztec diamond, that is they give a formula for the so-called \emph{the inverse Kasteleyn matrix}; see below for more details. The formula given in~\cite{CY:13} is particularly long and without any specific algebraic or asymptotic structure. In~\cite{CJ:16}, the formula is dramatically simplified and written in a good form for asymptotic analysis. Precise asymptotic expansions of the inverse Kasteleyn matrix reveal the limit shape as well as the asymptotic entries of the inverse Kasteleyn matrix in all three macroscopic regions, and at the solid-liquid and liquid-gas boundaries.  Due to technical considerations, these asymptotic computations were only performed along the main diagonal of the two-periodic Aztec diamond.  Roughly speaking, the outcome is that the asymptotics of the inverse Kasteleyn matrix at the liquid-gas boundary is given by a mixture of a dominant `gas part' and a lower order `Airy part'  correction.
 Unfortunately, these asymptotic results only describe the statistical behavior of the dominoes at the liquid-gas boundary, and do not determine the \emph{nature} of this boundary.  More explicitly, it is highly plausible, as can be seen in simulations, see Fig.~\ref{fig:tpn200}, that there is a family of lattice paths which separate the liquid and gas regions. The exact microscopic definition of these paths is not clear, see~\cite{CJ:16} for a discussion and a suggestion. The asymptotic computations in~\cite{CJ:16} do not give us any information about these paths. At the liquid-solid boundary the definition of the boundary is obvious, it is the first place where we see a deviation from the regular brick-wall pattern.
At the liquid-gas boundary however, these paths, if they exist, are in some sense `sitting' in a `gas' background. The paths represent long-distance correlations and the purpose of this paper is to extract these correlations from the background `gas noise' and show that they are described by the Airy point process. This strongly indicates that there should be a random boundary path at the liquid-gas boundary which, appropriately rescaled, converges to the Airy process just as at the liquid-solid boundary. 
 
We approach the problem via the so called \emph{height function} of the domino tiling, an idea originally introduced by Thurston~\cite{Thu:90}. The height function is defined for the two-periodic Aztec diamond at the center of each face of the Aztec diamond graph, characterized by the height differences in the following way: 
if there is a dimer covering the edge shared between two faces, the height difference between the two faces is $\pm 3$, while if there is no dimer covering the shared edge between two faces the height difference is $\mp 1$.  We use the convention that as we traverse from one face to an adjacent face, the height difference will be $+3$ if there is a dimer covering the shared edge and the left vertex of the incident edge is black.   Similarly the height difference is $+1$ when we cross an empty edge with a white vertex to the left. We assign the height at the face $(0,0)$ (outside of the Aztec diamond graph) to be equal to 0.  The height function on the faces bordering the Aztec diamond graph are deterministic and given by the above rule.  Fig.~\ref{fig:heights} shows a domino tiling of the Aztec diamond with the heights labeled at each face.
\begin{figure}
\begin{center}
\includegraphics[height=3in]{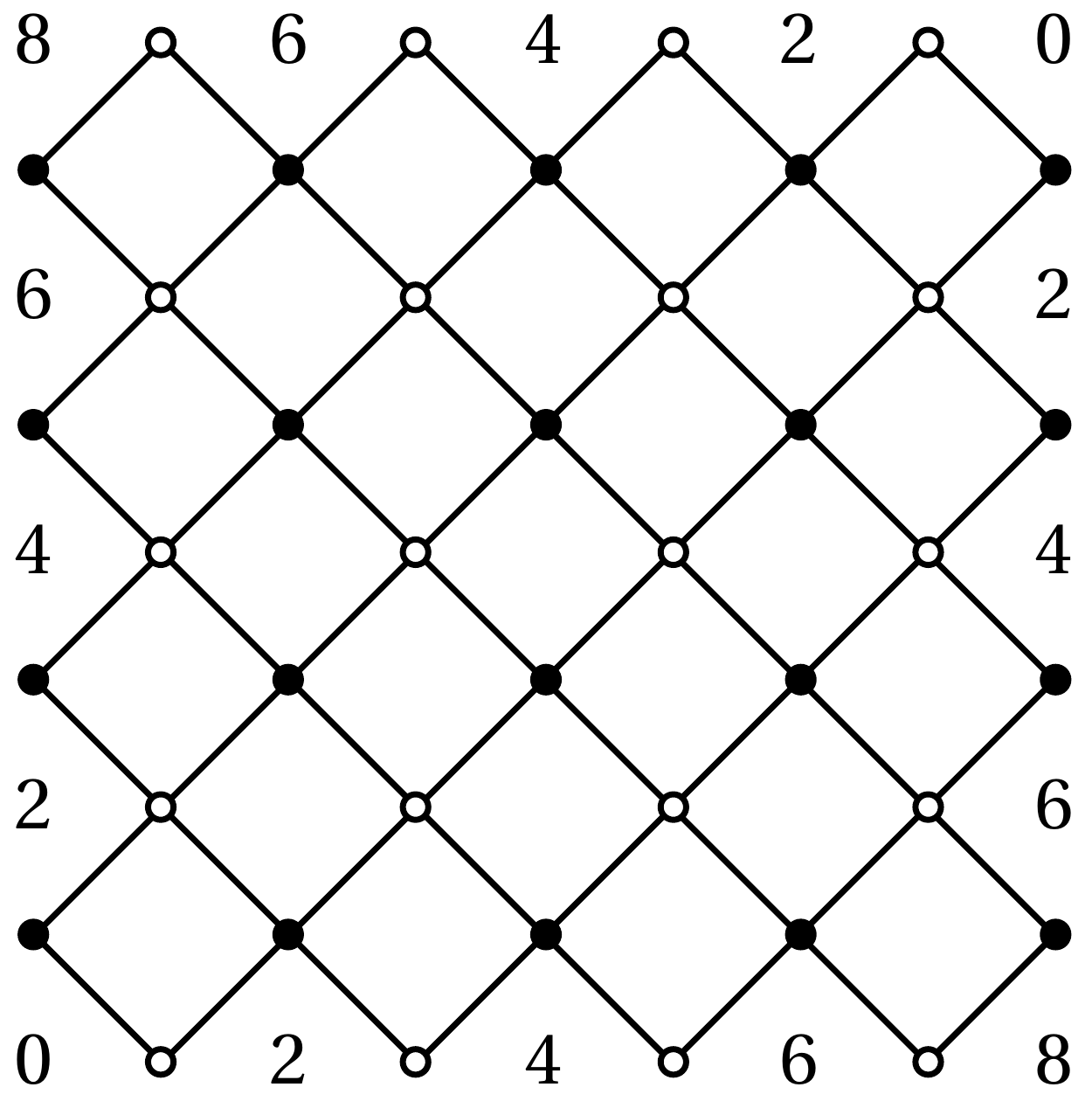}
\hspace{5mm}
\includegraphics[height=3in]{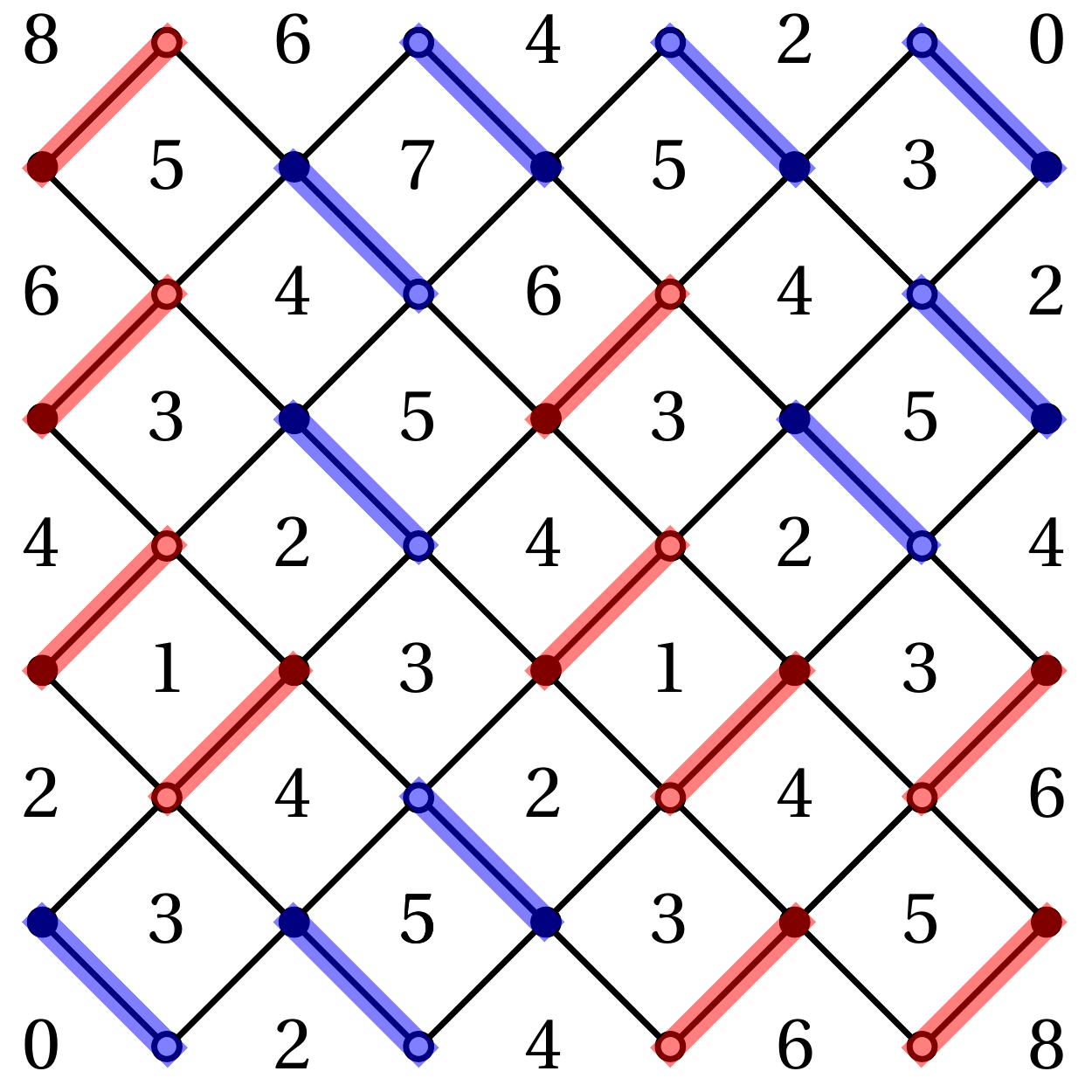}
\caption{The left figure shows the height function of an Aztec diamond graph of size 4 on the bordering faces imposing that the height at face $(0,0)$ is $0$. The right figure shows the same graph with a dimer covering and its corresponding height function.}
\label{fig:heights}
\end{center}
\end{figure}

\begin{figure}
\begin{center}
\includegraphics[height=4in]{}
\includegraphics[height=4in]{}
\caption{Two different drawings of a simulation of a domino tiling of the two-periodic Aztec diamond of size 200 with $a=0.5$ and $b=1$.  The top figure contains eight different colors, highlighting the solid and liquid phases. The bottom figure contains eight different gray-scale colors to accentuate the gas phase. We choose $a=0.5$ for aesthetic reasons in relation of the size of the Aztec diamond and the size of the liquid and gas regions. }
\label{fig:tpn200}
\end{center}
\end{figure}

The height function has a limit shape that is the solution of a certain variational problem and this also, in principle, leads to a description of the macroscopic boundaries between the regions with different phases,~\cite{CKP:01,KO:07}. For the two-periodic Aztec diamond this gives the algebraic equation for the boundaries seen in Fig.~\ref{fig:weights}. The solid and gas phases correspond to flat pieces, \emph{facets}, of the limit shape. The solid phase has a completely flat height function even at the microscopic level, whereas the height function in the gas phase has small fluctuations.  
The global fluctuations of the height function in the liquid, or  rough
phase,  for  many  tilings  models have  been  studied  revealing  the
so-called   \emph{Gaussian  free   field}  in   the  limit;   see  the
papers~\cite{BF:08, BG:16,  Bou:07a, deTil:07, Dub:15,  DG:15, Dui:13,
  Dui:15, Ken:00, Ken:01, Pet:15} for examples with varying techniques
of proof. Hence, the liquid-gas interface can be seen as an example of
a boundary  between a rough  random crystal  surface and a  facet with
small, random, almost independent and Poissonnian dislocations. 

The novelty of this paper is that we use the height function close to the liquid gas boundary to introduce a random measure, defined in detail below, which captures the long distance changes in the height function, but \emph{averages out} the local height fluctuations coming from the surrounding gas phase.  This random measure gives a partial explanation of the nature of the liquid-gas boundary.

\subsection{Definition of the random measure}

Let $\mathbbm{I}_{A}$ be the indicator function for some set $A$ and denote $\mathbbm{I}$ to be the identity matrix or operator. 
Let $\mathrm{Ai}(\cdot)$ denote the standard Airy function, and define
\begin{equation} \label{eq:Airymod}
\begin{split}
\tilde{\mathcal{A}}(\tau_1,\zeta_1;\tau_2,\zeta_2)&= 
\int_0^\infty e^{-\lambda (\tau_1-\tau_2) } \mathrm{Ai} (\zeta_1 +\lambda) \mathrm{Ai} (\zeta_2+\lambda) d\lambda\\
\end{split}
\end{equation}
and
\begin{equation}\label{eq:Airyphi}
\phi_{\tau_1,\tau_2} (\zeta_1 ,\zeta_2) =
 \mathbbm{I}_{\tau_1<\tau_2} \frac{1}{\sqrt{4 \pi (\tau_2-\tau_1)}} e^{-\frac{(\zeta_1-\zeta_2)^2}{4(\tau_2-\tau_1)}-\frac{(\tau_2-\tau_1)(\zeta_1+\zeta_2)}{2}+\frac{(\tau_2-\tau_1)^3}{12}},
\end{equation}
the latter is referred to as the \emph{Gaussian part} of the extended Airy kernel; see~\cite{Joh:03}. The \emph{extended Airy kernel}, ${\mathcal{A}}(\tau_1,\zeta_1;\tau_2,\zeta_2)$, is defined by
\begin{equation}\label{eq:extendedAiry}
{\mathcal{A}}(\tau_1,\zeta_1;\tau_2,\zeta_2)=\tilde{\mathcal{A}}(\tau_1,\zeta_1;\tau_2,\zeta_2)-\phi_{\tau_1,\tau_2} (\zeta_1 ,\zeta_2).
\end{equation}

Let $\beta_{1} <\dots<\beta_{L_1}$, $L_1 \geq 1$, be given fixed real numbers.  The extended Airy kernel gives a determinantal point process on $L_1$ lines $\{\beta_1,\dots,\beta_{L_1}\} \times \mathbb{R}$.  We think of this process as a random measure $\mu_{\mathrm{Ai}}$ on $\{\beta_1,\dots,\beta_{L_1}\} \times \mathbb{R}$ in the following way:

Let $A_1,\dots , A_{L_2}$, $L_2 \geq 1$, be finite, disjoint intervals in $\mathbb{R}$ and write 
\begin{equation} \label{phi}
\Psi(x)= \sum_{p=1}^{L_2} \sum_{q=1}^{L_1} w_{p,q} \mathbbm{I}_{\{\beta_q\} \times A_p}(x)
\end{equation}
for $x \in \{\beta_1,\dots,\beta_{L_1}\} \times \mathbb{R}$, where $w_{p,q}$ are given complex numbers.  Then,
\begin{equation}
\mathbb{E} \left[ \exp \left( \sum_{p=1}^{L_2} \sum_{q=1}^{L_1} w_{p,q} \mu_{\mathrm{Ai}} (\{\beta_q\} \times A_p) \right) \right]
	= \det \left(  \mathbbm{I} + ( e^{\Psi}-1) \mathcal{A} \right)_{L^2(\{\beta_1,\dots,\beta_{L_1}\} \times \mathbb{R})}
\end{equation}
for $w_{p,q} \in \mathbb{C}$, defines the random measure $\mu_{\mathrm{Ai}}$, the \emph{extended Airy point process}.  

The positions of the particles in  the extended Airy point process can
be    thought    of    as    the   intersections    of    the    lines
$\{\beta_1,\dots,\beta_{L_1}\}  \times \mathbb{R}$  with  a family  of
random curves (a \emph{line ensemble};  see \cite{CH:14}). If we think
of  these  lines  as  level   lines  of  some  height  function,  then
$\mu_{\mathrm{Ai}} (\{\beta_q\} \times A)$ is the height change in $A$
along the  line $\{\beta_q\}  \times A$.  We want  to define  a random
measure in a random tiling of  the two-periodic Aztec diamond close to
the  liquid-gas  boundary  which  captures the  long  distance  height
differences, and  which converges  to $\mu_{\mathrm{Ai}}$.  Take $L_1$
lines in the  Aztec diamond at distances of order  $m^{2/3}$ from each
other,  and  look at  the  height  differences  along these  lines  in
intervals of length $O(m^{1/3})$ close  to the liquid-gas boundary. In
Fig.~\ref{fig:tpn200}, we  see something like long  random curves, but
the height differences along an interval  in the diamond will come not
only from these  curves but also from the smaller  sized objects which
are in  a sense  due to  the gas-like features  in the  background. We
expect  that  these  smaller  sized  objects  are  much  smaller  than
$O(m^{1/3})$,  and we  further  expect that  the correlations  between
these smaller sized  objects decay rapidly. We isolate  the effects of
the long  curves by  taking averages of  the height  differences along
copies of  the intervals on  $M$ parallel  lines, with $M$  tending to
infinity slowly  as $m$ tends  to infinity. The distances  between the
copies are  of order $(\log m)^2$,  which is less than  $m^{2/3}$, but
large enough  for the short range  correlations to decay. We  will now
make these  ideas precise and  define a  random measure on  $\mu_m$ on
$\{\beta_1,\dots,\beta_{L_1}\}  \times \mathbb{R}$  that we  will show
converges to $\mu_{\mathrm{Ai}}$.

The following constants come from the asymptotic results for the inverse Kasteleyn matrix for the two-periodic Aztec diamond; see~\cite{CJ:16} and Theorem~\ref{Airyasymptotics} below. Let
\begin{equation} \label{eq:parameterc}
c=\frac{a}{(1+a^2) }
\end{equation}
 which occurs throughout the paper. For this paper, we fix
 $\xi=-\frac{1}{2}\sqrt{1-2c}$ and set
\begin{equation} \label{eq:scalingparameters}
c_0=\frac{(1-2c)^{\frac{2}{3}}}{(2c(1+2c))^{\frac{1}{3}}}, \hspace{5mm} 
 \lambda_1 =  \frac{\sqrt{1-2c}}{2c_0} \hspace{5mm} \mbox{and} \hspace{5mm}
\lambda_2=\frac{(1-2c)^{\frac{3}{2}}}{2c c_0^2}.
\end{equation}
The term $\xi$ can be thought of as the asymptotic parameter which puts the analysis at the liquid-gas boundary after re-scaling (along the main diagonal in the third quadrant of the rotated Aztec diamond). The terms $\lambda_1$ and $\lambda_2$ are scale parameters, as found in~\cite{CJ:16}.

We will define discrete lines $\mathcal{L}_m(q,k)$, $q \in \{1,\dots,L_1\}$, $1\leq k \leq M$, which we should think of as $M$ copies of the lines $\{\beta_1,\dots,\beta_{L_1}\} \times \mathbb{R}$ embedded in the Aztec diamond as mentioned above.  Recall that $e_1=(1,1)$ and $e_2=(-1,1)$.
Set
\begin{equation}
\beta_m(q,k)=2[ \beta_q \lambda_2 (2m)^{2/3} + k \lambda_2 (\log m )^2 ] 
\end{equation}
and define 
\begin{equation}
\mathcal{L}_m(q,k)= \mathcal{L}_m^0(q,k) \cup \mathcal{L}_m^1(q,k),
\end{equation}
where, for $\eps \in \{0,1\}$,
\begin{equation}
	\mathcal{L}_m^{\eps}(q,k) = \{(2t-\eps+\frac{1}{2} ) e_1 -\beta_m(q,k)e_2 ; t \in [0,4m]\cap \mathbb{Z} \}.
\end{equation}
The lines $\mathcal{L}_m(q,k)$ are \emph{discrete lines} parallel to the main diagonal with direction $(1,1)$; see Fig.~\ref{fig:Lmqk}.
\begin{figure}
\begin{center}
\includegraphics[height=5cm]{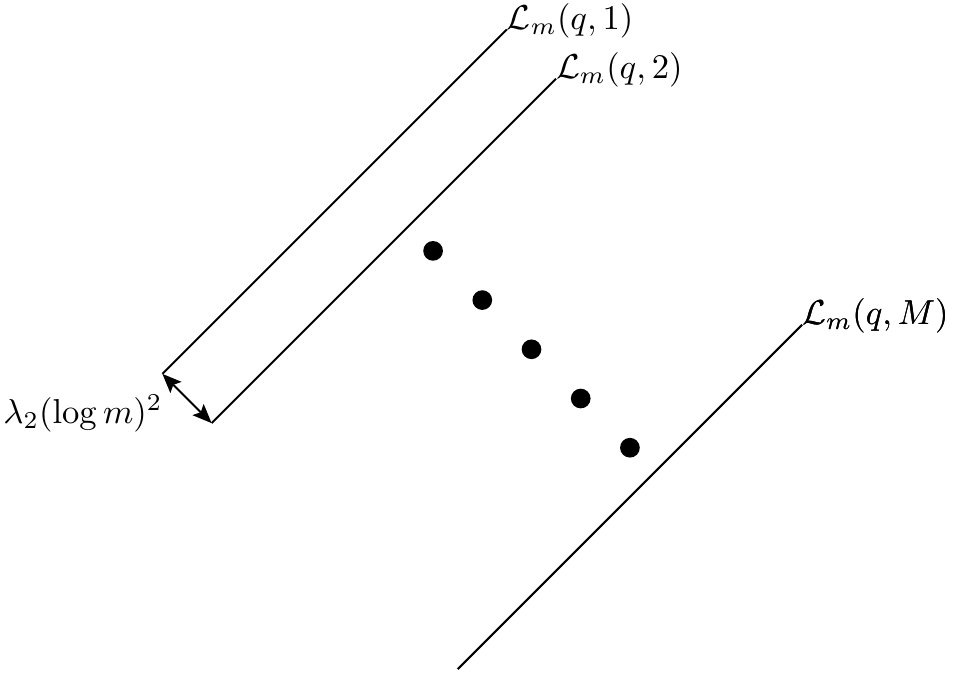}
\caption{The lines $\mathcal{L}_m(q,1)$ to $\mathcal{L}_m(q,M)$}
\label{fig:Lmqk}
\end{center}
\end{figure}
Write 
\begin{equation}
\mathcal{L}_m^{\eps}= \bigcup_{q=1}^{L_1} \bigcup_{k=1}^M \mathcal{L}_m^{\eps}(q,k)
\end{equation}
and 
\begin{equation} \label{Lm}
\mathcal{L}_m= \mathcal{L}_m^{0} \cup \mathcal{L}_m^{1},
\end{equation}
so that $\mathcal{L}_m$ is the union of all these discrete lines. For $z \in \mathcal{L}_m$, we write
\begin{equation}
\eps (z)=\eps \hspace{5mm} \mbox{if } z \in \mathcal{L}_m^{\eps}.
\end{equation}
Each of the points in $\mathcal{L}_m^\eps$ may be covered by a dimer. When computing height differences, the sign of the height change as we cross a dimer depends on whether $\eps=0$ or $1$. Later, we will think of these dimers as particles and $\eps$ will then be called the \emph{parity} of the particle. We think of $\eps(z)=0$ having even parity while $\eps(z)=1$ having odd parity.

We call a subset $I \subseteq \mathcal{L}_{m}(q,k)$ a \emph{discrete interval} if it has the form 
\begin{equation}\label{discreteI}
I= \{(\frac{1}{2} +t) e_1 -\beta_m(q,k)e_2 ; t_1 \leq t < t_2  \}, 
\end{equation}
where $t_1,t_2 \in 2 \mathbb{Z}+1$. We denote the height of the face $F$ by $h(F)$ as defined in Section~\ref{sec:intro:overview}.  The $a$-faces adjacent to the discrete interval $I$ in~\eqref{discreteI} are defined to be the faces
\begin{equation}
F_+(I)=t_2e_1 -\beta_m(q,k) e_2
\end{equation}
\begin{equation}
F_-(I)=t_1e_1 -\beta_m(q,k) e_2,
\end{equation}
which are the \emph{end faces} of a discrete interval; see Fig.~\ref{fig:endpoints}.
\begin{figure}
\begin{center}
\includegraphics[height=6cm]{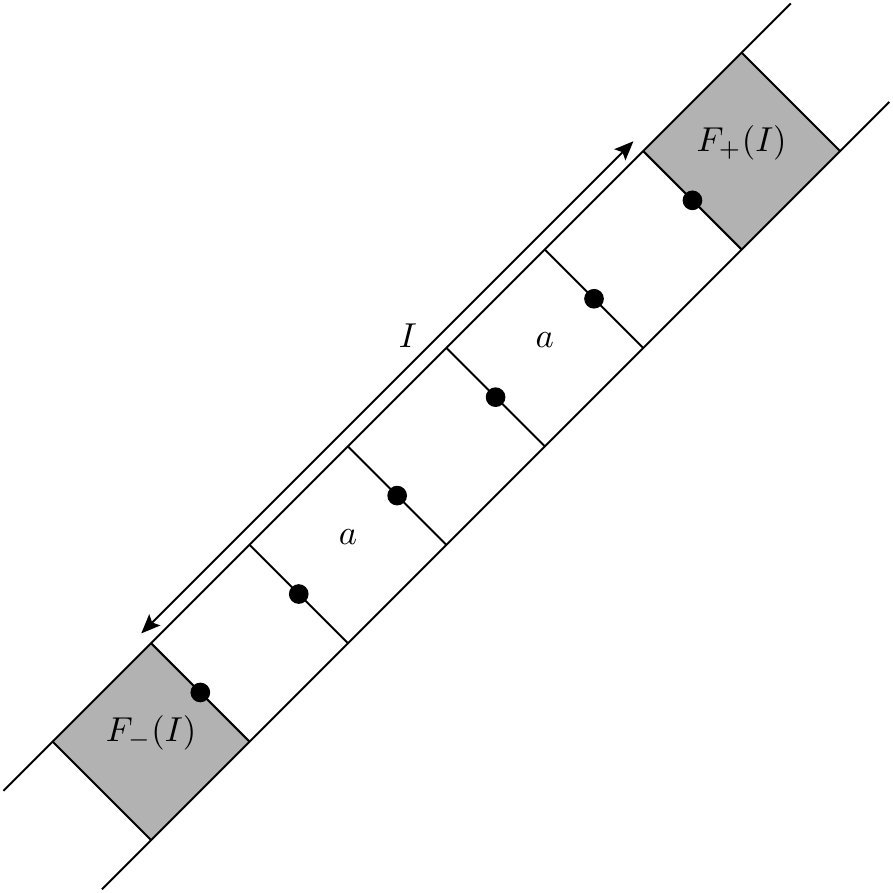}
\caption{The two endpoints of I: $F_-(I)$ and $F_+(I)$}
\label{fig:endpoints}
\end{center}
\end{figure}

The \emph{height difference} along $I$ is then 
\begin{equation}
\Delta h(I) = h(F_+(I))-h(F_-(I)).
\end{equation}

Write
\begin{equation}
\rho_m=4 [m(1+\xi)], \hspace{5mm} \tau_m(q)=[\beta_q^2 \lambda_1 (2m)^{1/3}].
\end{equation}
We want to embed the real line intervals $A_p=[\alpha_p^l,\alpha_p^r]$, $1 \leq p \leq L_2$, in the Aztec diamond as discrete intervals close to the liquid-gas boundary.  For this, and for the asymptotic analysis later in the paper, it is convenient to use the following parameterization of $\mathcal{L}_m(q,k)$.  Given $z \in \mathcal{L}_m(q,k)$, there is a $t(z) \in [-\rho_m/2+\tau_m(q), 4m-\rho_m/2+ \tau_m(q)] \cap \mathbb{Z}$ such that 
\begin{equation} \label{tz}
z=(\rho_m +2(t(z)-\tau_m(q)) -\eps(z) +\frac{1}{2} )e_1 -\beta_m(q,k)e_2.
\end{equation}
We also write, for $s \in \mathbb{Z}$, 
\begin{equation}\label{zqks}
z_{q,k}(s) = (\rho_m +s -2 \tau_m(q)+\frac{1}{2})e_1 - \beta_m (q,k) e_2,
\end{equation}
so that $\mathcal{L}_m(q,k)= \{z_{q,k} (s); s \in [-\rho_m+2\tau_m(q)-1, 8m-\rho_m+2 \tau_m(q)]\cap \mathbb{Z}\}$.

Let  
\begin{equation} \label{Apmtilde}
\tilde{A}_{p,m} = \{s \in \mathbb{Z} ; 2 [ \alpha_p^l \lambda_1 (2m)^{1/3}]-1 \leq s < 2 [ \alpha_p^r \lambda_1 (2m)^{1/3}] +1\}.
\end{equation}
The embedding of the interval $A_p$ as a discrete interval in $\mathcal{L}_{m}(q,k)$ is then given by
\begin{equation}
I_{p,q,k}=\{z_{q,k}(s); s \in \tilde{A}_{p,m} \}. \label{Ipqk}
\end{equation}
We define the random signed measure $\mu_m$ on $\{\beta_1,\dots,\beta_{L_1}\}\times \mathbb{R}$ by
\begin{equation}
\mu_m ( \{\beta_q\} \times A_p ) = \frac{1}{4 M } \sum_{k=1}^M \Delta h(I_{p,q,k}) \hspace{5mm} \mbox{for $1\leq p\leq L_2,1\leq q \leq L_1$}. \label{mu_m}
\end{equation}
The height changes between $a$-faces along a line are multiples of 4.
Intuitively, the factor $4$ in the above normalization ensures that we increase the height by $1$ for each connected component of $a$ edges traversing the two boundaries, which
in a sense are the paths describing the transition between the liquid and gas phases.

\subsection{Main Theorem}

We now state the main theorem of this paper.  Assume that $M=M(m)\to \infty$ as $m\to \infty$, but $M(m)( \log m)^2 /m^{1/3} \to 0$ as $m \to \infty$.  E.g.\ we could take 
$M=(\log m)^\gamma$ for some $\gamma>0$.

\begin{thma} \label{thm:moments}
The sequence of measures $\{\mu_m \}$ converges to $\mu_{\mathrm{Ai}}$ as $m$ tends to infinity in the sense that
that there is an $R>0$ so that for all $|w_{p,q}| \leq R$, $1 \leq p \leq L_2$, $1 \leq q \leq L_1$, 
\begin{equation} \label{Laplacelimit}
\lim_{m \to \infty} \mathbb{E} \left[ \exp \left( \sum_{p=1}^{L_2} \sum_{q=1}^{L_1} w_{p,q} \mu_{m} (\{\beta_q\} \times A_p) \right) \right]
=\mathbb{E} \left[ \exp \left( \sum_{p=1}^{L_2} \sum_{q=1}^{L_1} w_{p,q} \mu_{\mathrm{Ai}} (\{\beta_q\} \times A_p) \right) \right].
\end{equation}

\end{thma}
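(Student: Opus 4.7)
The plan is to realize the Laplace transform in \eqref{Laplacelimit} as a Fredholm determinant involving the inverse Kasteleyn matrix $K_m^{-1}$, decompose this matrix into a gas part and an Airy part via the asymptotics provided by Theorem~\ref{Airyasymptotics}, and then show that after the averaging over $M$ parallel copies the gas contributions wash out, leaving the Airy Fredholm determinant of the right-hand side.

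First I would convert each $\Delta h(I_{p,q,k})$ into a linear combination of indicators of the event that a particular edge is covered by a dimer. Traversing $I_{p,q,k}$ along $\mathcal{L}_m(q,k)$ from $F_-$ to $F_+$, each crossed edge contributes $\pm 3$ or $\mp 1$ to the height depending on whether it carries a dimer, so $\Delta h(I_{p,q,k}) = 4\sum_{z\in I_{p,q,k}} \sigma(z)\,\mathbbm{I}[z\text{ is covered}] + \mathrm{const}$, for explicit parity-dependent signs $\sigma(z)\in\{\pm 1\}$ determined by $\eps(z)$. Since the dimer configuration is determinantal with kernel $K_m^{-1}$, expanding the exponential and reorganizing gives a representation
\[ \mathbb{E}\Bigl[\exp\bigl(\sum_{p,q} w_{p,q}\,\mu_m(\{\beta_q\}\times A_p)\bigr)\Bigr] = e^{c_m(w)}\,\det\bigl(\mathbbm{I}+(e^{\Phi_m}-1)K_m^{-1}\bigr)_{L^2(\mathcal{L}_m)}, \]
where $c_m(w)$ is deterministic (from the `const' above) and $\Phi_m$ is a step function on $\mathcal{L}_m$ taking the value $\sigma(z)w_{p,q}/M$ on $I_{p,q,k}$.

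Next I would plug the decomposition $K_m^{-1}(z_1,z_2) = G(z_1,z_2) + A_m(z_1,z_2) + \mathcal{E}_m(z_1,z_2)$ supplied by Theorem~\ref{Airyasymptotics} into the Fredholm expansion and sort terms by the labels (gas, Airy, error) carried by each $K_m^{-1}$-factor. Here $G$ has exponential decay in $|z_1-z_2|$ coming from the pure gas phase, $A_m$ rescaled by $(2m)^{1/3}\lambda_1$ converges to $\mathcal{A}(\beta_{q_1},\zeta_1;\beta_{q_2},\zeta_2)$ times the explicit prefactor needed to compensate the $1/4$ in \eqref{mu_m} and the Jacobians of the embedding \eqref{zqks}--\eqref{Ipqk}, and $\mathcal{E}_m$ is sub-leading. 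Terms containing any $G$-factor joining two distinct lines $\mathcal{L}_m(q_1,k_1)$ and $\mathcal{L}_m(q_2,k_2)$ pick up a factor $e^{-c(\log m)^2}$ and are negligible; terms built only from $A_m$-factors converge, via Riemann sum approximation on the $\tilde A_{p,m}$, to the Fredholm expansion of $\det(\mathbbm{I}+(e^{\Psi}-1)\mathcal{A})_{L^2(\{\beta_1,\dots,\beta_{L_1}\}\times\mathbb{R})}$.

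The delicate contributions are those in which $G$-factors appear only within a single line. There the gas kernel is pointwise $O(1)$ and so produces a contribution of order $m^{1/3}$ when summed over $I_{p,q,k}$; however the $1/(4M)$ normalization in \eqref{mu_m}, together with the exponential decorrelation between copies at distance $(\log m)^2$, turns the total single-line contribution into an empirical average over the $M$ copies of an almost i.i.d.\ quantity, which by a law-of-large-numbers estimate concentrates on its mean. That mean is deterministic, can be absorbed into $c_m(w)$, and cancels in the limit against the analogous piece of the Fredholm expansion. The main obstacle is making this quantitative: one needs trace-norm bounds on $(e^{\Phi_m}-1)K_m^{-1}$ that are uniform in $m$ in order to commute the limit with the Fredholm expansion, sharp control on $\mathcal{E}_m$ after summation over the $O(m^{1/3})$ points of each interval, and a precise LLN estimate for the single-line gas contributions whose error decays faster than $1/M$. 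The hypotheses $M\to\infty$, $M(\log m)^2/m^{1/3}\to 0$, and $|w_{p,q}|\le R$ are exactly what make these three ingredients simultaneously available: lines far enough apart to decorrelate the gas, close enough to sit inside a single Airy window, and weights small enough for absolute convergence of the Fredholm series for the signed measure $\mu_m$.
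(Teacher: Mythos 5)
Your overall frame (particle/indicator representation, Fredholm determinant, gas--Airy decomposition of $K^{-1}_{a,1}$, exponential decay to kill cross-line gas factors, Riemann sums for the Airy factors) matches the paper, but the two places you label ``delicate'' are exactly where your plan does not work as described. First, the single-line pure-gas contribution is not an almost-i.i.d.\ quantity that you can tame by a law of large numbers and absorb into a constant $c_m(w)$. In the cumulant expansion of $\log\det(\mathbbm{I}+(e^{\psi/M}-1)\mathtt{K}_m)$ the first single-line gas term is already of size $\frac{1}{M}\cdot M\cdot O(m^{1/3})=O(m^{1/3})$, i.e.\ divergent, and there is no extraneous prefactor in the exact representation to cancel it against (in the paper's particle picture $\Delta h(I_{p,q,k})=4\sum_i(-1)^{\eps(z_i)}\mathbbm{I}_{p,q,k}(z_i)$ with no additive constant). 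What saves the day is an \emph{exact algebraic cancellation}, not concentration: using the Fourier/convolution structure of the translation-invariant gas kernel, the sum of all single-line gas cumulants is rewritten as $\frac{|A_{p,m}|}{2\pi\mathrm{i}}\int_{\Gamma_1}\frac{d\omega}{\omega}\log\det\bigl(\mathbbm{I}+(e^{\frac{w}{M}\eta}-1)F_\omega\bigr)$ with a $2\times 2$ symbol matrix $F_\omega$, and the identities $f_{0,0}=f_{1,1}$ and $af_{0,0}-a^2(f_{0,0}^2+f_{0,1}f_{1,0})=0$ (Lemma~\ref{lem:feps}) force this determinant to equal $1$ identically, so the whole block is $O(1/M)$ (Lemma~\ref{U0}). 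Your proposal contains no substitute for this identity, and without it the ``mean'' you want to absorb is of order $m^{1/3}$ and the argument collapses.

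Second, your assertion that the limit Fredholm determinant is produced by the terms ``built only from $A_m$-factors'' is incorrect, because each weight factor in your expansion is $e^{\Phi_m}-1=O(w/M)$, so a product of $\ell$ of them attached to the same interval $\{\beta_q\}\times A_p$ is $O((w/M)^\ell)$; to recover the higher powers of $w$ hidden in $e^{\Psi}-1$ (and the accompanying $1/\ell!$) you must take the $\ell$ points on \emph{different} copies $k$ of the same line, joined by within-line gas factors between copies at distance of order $(\log m)^2$. In the paper these mixed gas--Airy terms are the heart of the computation of $U_3^*(m)$: by Proposition~\ref{Asymptotics}(3)(a) each such gas factor becomes a Gaussian kernel in the rescaled variable, it integrates to $1$, and the ordering constraint $\lim_{m\to\infty}M^{-s}\sum_{\overline{k}}\prod_{i\in J'}\mathbbm{I}_{k_i>k_{i+1}}=1/(\ell_1!\cdots\ell_r!)$ regenerates exactly the cumulant expansion of $\log\det(\mathbbm{I}+(e^{\Psi}-1)\mathcal{A})$ (together with the parity sum $\sum_{\overline{\eps}}Q(\overline{\eps})=(-1)^s$ of Lemma~\ref{Tepsilon}, which your sketch also leaves unaddressed). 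So the within-line gas kernel is not noise to be averaged away; part of it cancels exactly (the $U_0$ block) and part of it is an essential building block of the limit, and both mechanisms are missing from your proposal.
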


In the above equation, the expectation on the left side is with respect to the two-periodic Aztec diamond measure and the right side is with respect to the extended Airy point process.

\subsection{Heuristic Interpretation}

The asymptotic formulas at the liquid-gas boundary for \emph{the inverse Kasteleyn matrix}, described below, are computed in~\cite{CJ:16}.  These formulas have a primary contribution from the full-plane gas phase inverse Kasteleyn matrix and a correction term given in terms of the extended Airy kernel. This means that when we consider correlations between dominoes that are relatively close they are essentially the same as in a pure gas phase. However, at longer distances the correction term becomes important since correlations in a pure gas phase decay exponentially. A heuristic description of the behavior of the dominoes at the liquid-gas boundary is that the behavior is primarily a gas phase but there is a family of random curves which have a much longer interaction scale than the gas phase objects.  Although this is not quite an accurate description of the boundary, it naturally motivates the random measure $\mu_m$ defined in~\eqref{mu_m}.

In this paper, we do not investigate whether there is a natural geometric curve that separates the liquid and gas regions. A candidate for such a path, the last \emph{tree-path}, is discussed in~\cite[Section 6]{CJ:16}, but there are other possible definitions. Such a path should converge to the Airy$_2$ process. The present work can be thought of as a crucial  step in proving
this by providing  a specific \emph{averaging} of  the height function
at   the  liquid-gas   boundary  which   isolates  the   long-distance
correlations. However, it  does not give any  direct information about  the existence of a natural path that converges to the Airy$_2$ process.  We plan to investigate this in a future paper (work in progress).

\subsection{Organization}   
The rest of the paper is organized as follows: in Section~\ref{section:particles}, we introduce the particle description associated to the height function and the inverse Kasteleyn matrix.  In Section~\ref{section:asymptoticformulas}, we state asymptotic formulas and results needed for the rest of the paper.  The proof of Theorem~\ref{thm:moments} is given in Section~\ref{sec:mainthmproof}.  In Section~\ref{sec:U0proof}, we give the proof of lemmas that are used in the proof of Theorem~\ref{thm:moments}. Finally, in Section~\ref{sec:proofasymptotics}, we give the proof of the results stated in Section~\ref{section:asymptoticformulas}.

\subsection*{Acknowledgments}
All the authors wish to thank  the Galileo Galilei Institute for hospitality and support during the scientific program `Statistical Mechanics, Integrability and Combinatorics', which provided a useful platform for this work.   We would also like to thank Anton Bovier, Maurice Duits and Patrik Ferrari for useful discussions and the referees for useful comments and suggestions.  

\section{Inverse Kasteleyn matrix and the particle process} \label{section:particles}

In this section, we introduce a particle process which will be used to prove~\eqref{Laplacelimit}. This particle process enables the direct use of determinantal point process machinery.
 
For the two-periodic  Aztec diamond, there are two types of white vertices and two types of black vertices seen from the two possibilities of edge weights around each white and each black vertex.
To distinguish between these types of vertices, we define for $i\in \{0,1\}$
\begin{equation}
 \mathtt{B}_{i} = \{(x_1,x_2) \in \mathtt{B}: x_1+x_2 \mod 4=2i+1\}
\end{equation}
and 
\begin{equation}
  \mathtt{W}_{i} = \{(x_1,x_2) \in \mathtt{W}: x_1+x_2 \mod 4=2i+1\}.
\end{equation}
There are four different types of dimers having weight $a$ with  $(\mathtt{W}_i,\mathtt{B}_j)$ for $i,j\in\{0,1\}$ and a further four types of dimers having weight $1$ with $(\mathtt{W}_i,\mathtt{B}_j)$ for $i,j\in\{0,1\}$.

The \emph{Kasteleyn matrix} for the two periodic Aztec diamond of size $n=4m$ with  parameters $a$ and $b$, denoted by $K_{a,b}$, is given by 
\begin{equation} \label{pf:K}
      K_{a,b}(x,y)=\left\{\begin{array}{ll}
		     a (1-j) + b j  & \mbox{if } y=x+e_1, x \in \mathtt{B}_j \\
		     (a j +b (1-j) ) \mathrm{i} & \mbox{if } y=x+e_2, x \in \mathtt{B}_j\\
		     a j + b (1-j)  & \mbox{if } y=x-e_1, x \in \mathtt{B}_j \\
		     (a (1-j) +b j ) \mathrm{i} & \mbox{if } y=x-e_2, x \in \mathtt{B}_j\\
			0 & \mbox{if $(x,y)$ is not an edge}
		     \end{array} \right.
\end{equation}
where $\mathrm{i}^2=-1$ and $j\in \{0,1\}$. For the significance of the Kasteleyn matrix for random tiling models, see for example~\cite{Ken:09}.

Since the Aztec diamond graph is   bipartite, meaning that there is a two-coloring of the vertices, from~\cite{Ken:97} the dimers of the two-periodic Aztec diamond form a determinantal point process. More explicitly,  suppose that $E=\{\mathtt{e}_i\}_{i=1}^r$ is a collection of distinct edges with $\mathtt{e}_i=(\mathtt{b}_i,\mathtt{w}_i)$, where $\mathtt{b}_i$ and $\mathtt{w}_i$ denote black and white vertices. 
\begin{thma}[\cite{Ken:97,Joh17}]\label{localstatisticsthm}
    The dimers form a determinantal point process on the edges of the Aztec diamond graph with correlation kernel $L$ meaning that 
	\begin{equation}
		\mathbb{P}(\mathtt{e}_1,\dots, \mathtt{e}_r)=\det L(\mathtt{e}_i,\mathtt{e}_j)_{1 \leq i,j \leq r}
	\end{equation}
where
    \begin{equation}
	  L(\mathtt{e}_i,\mathtt{e}_j) = K_{a,b}(\mathtt{b}_i,\mathtt{w}_i) K_{a,b}^{-1} (\mathtt{w}_j,\mathtt{b}_i).
    \end{equation}
\end{thma}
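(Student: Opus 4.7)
The plan is to combine Kasteleyn's theorem for the partition function with Jacobi's identity for the minors of an inverse matrix, so that the probability of seeing a prescribed set of dimers becomes a ratio of determinants, which collapses to the claimed determinantal formula.

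First, I would verify that the weighted complex matrix $K_{a,b}$ defined in (\ref{pf:K}) is a genuine Kasteleyn matrix for the two-periodic Aztec diamond: the phases $1, \mathrm{i}, -1, -\mathrm{i}$ placed on the four edge-directions at each black vertex satisfy the Kasteleyn flatness condition around every bounded face (the product of phases around each face, combined with the orientation, gives an odd power of $\mathrm{i}$), so that the weighted enumeration of dimer coverings equals $|\det K_{a,b}|$. Call this partition function $Z$. For any collection of disjoint edges $\mathtt{e}_i=(\mathtt{b}_i,\mathtt{w}_i)$, $i=1,\dots,r$, the conditional sum over dimer coverings containing all $\mathtt{e}_i$ is the partition function $Z'$ of the subgraph with the vertices $\mathtt{b}_1,\dots,\mathtt{b}_r,\mathtt{w}_1,\dots,\mathtt{w}_r$ deleted, so
\begin{equation}
\mathbb{P}(\mathtt{e}_1,\dots,\mathtt{e}_r)=\frac{1}{Z}\prod_{i=1}^{r} K_{a,b}(\mathtt{b}_i,\mathtt{w}_i)\cdot Z'.
\end{equation}
The deleted subgraph is still planar bipartite with the inherited Kasteleyn orientation, so a second application of Kasteleyn's theorem expresses $Z'$ as the absolute value of the determinant of the submatrix $K'$ of $K_{a,b}$ obtained by deleting the rows indexed by $\{\mathtt{b}_i\}$ and the columns indexed by $\{\mathtt{w}_i\}$.

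The second ingredient is Jacobi's cofactor identity: for any invertible matrix $M$ and any index sets $I,J$ of equal size, the minor obtained by deleting rows $I$ and columns $J$ satisfies
\begin{equation}
\frac{\det M_{\widehat{I},\widehat{J}}}{\det M}=(-1)^{\sum I+\sum J}\det\bigl[(M^{-1})_{J,I}\bigr].
\end{equation}
Applied to $M=K_{a,b}$ with $I=\{\mathtt{b}_i\}$, $J=\{\mathtt{w}_i\}$, this turns $Z'/Z$ into $\pm\det\bigl[K_{a,b}^{-1}(\mathtt{w}_j,\mathtt{b}_i)\bigr]_{i,j=1}^{r}$. Multiplying the $i$-th row by the scalar $K_{a,b}(\mathtt{b}_i,\mathtt{w}_i)$ absorbs the prefactor $\prod_{i} K_{a,b}(\mathtt{b}_i,\mathtt{w}_i)$ into the matrix, so that
\begin{equation}
\mathbb{P}(\mathtt{e}_1,\dots,\mathtt{e}_r)=\pm\det\bigl[K_{a,b}(\mathtt{b}_i,\mathtt{w}_i)K_{a,b}^{-1}(\mathtt{w}_j,\mathtt{b}_i)\bigr]_{1\le i,j\le r},
\end{equation}
which is exactly the asserted determinant of $L(\mathtt{e}_i,\mathtt{e}_j)$.

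The main obstacle is the sign bookkeeping: one has to verify that the signs coming from the two invocations of Kasteleyn's theorem (one for $Z$, one for $Z'$) combine with the signature $(-1)^{\sum I+\sum J}$ in Jacobi's identity to produce exactly $+1$, so that the signed determinant is the positive probability. This is the classical consistency lemma for bipartite Kasteleyn matrices on planar graphs; the cleanest way to handle it is to show both sides are continuous functions of the edge weights that coincide on the diagonal case $r=1$ (where the identity reduces to the well-known single-edge formula $\mathbb{P}(\mathtt{e})=K(\mathtt{b},\mathtt{w})K^{-1}(\mathtt{w},\mathtt{b})$) and then appeal to the fact that any minor of $K_{a,b}$ associated with a removable subgraph inherits a compatible Kasteleyn orientation, fixing the global sign. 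Once that is checked, the theorem follows; the formula is then reinterpreted as the defining identity of a determinantal point process on edges with kernel $L$.
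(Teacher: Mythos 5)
The paper does not prove this theorem at all: it is imported wholesale from the cited references (Kenyon's \emph{Local statistics of lattice dimers} and Johansson), so there is no internal proof to compare against. Your outline is essentially the classical argument from those references: Kasteleyn enumeration for $Z$, expansion over configurations containing $\mathtt{e}_1,\dots,\mathtt{e}_r$, Jacobi's identity for the complementary minor, and absorption of the prefactor $\prod_i K_{a,b}(\mathtt{b}_i,\mathtt{w}_i)$ into the rows. That is the right route and the skeleton is sound.

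Two steps are stated more confidently than they deserve. First, the claim that the vertex-deleted graph ``inherits the Kasteleyn orientation'' and so Kasteleyn's theorem applies directly to give $Z'=|\det K'|$ is not automatic: deleting vertices merges faces, and the flatness condition around the new faces need not hold. The correct justification in this setting is different: every matching of the deleted graph extends to a matching of the full graph by adding back the edges $\mathtt{e}_i$, so via the Laplace/cofactor expansion the terms of $\det K_{a,b}$ coming from configurations containing all $\mathtt{e}_i$ equal $\prod_i K_{a,b}(\mathtt{b}_i,\mathtt{w}_i)$ times the corresponding terms of the minor, up to one common sign; since all terms of $\det K_{a,b}$ have a common phase, so do the surviving terms of the minor, which is what gives $|\det K'|=Z'$. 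Second, the sign bookkeeping by ``continuity in the weights plus the $r=1$ case'' is not a proof as stated: for fixed $r>1$ the identity is a different function of the weights, the determinant can vanish or the event can have probability zero, and positivity only pins the answer down up to a global sign per configuration of rows and columns. Kenyon fixes the sign by an explicit computation (tracking the permutation signs in the cofactor expansion against the signature $(-1)^{\sum I+\sum J}$ in Jacobi's identity); alternatively one can induct on $r$ using conditioning and the Schur complement of the inverse Kasteleyn matrix. With those two points repaired, your argument is a complete and standard proof of the quoted theorem.
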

As mentioned in the introduction, the derivation for the \emph{inverse Kasteleyn matrix}, $K_{a,b}^{-1}$ for the two-periodic Aztec diamond is given in~\cite{CY:13} and a simplification of this formula, which is amenable for asymptotic analysis, is given in~\cite{CJ:16}.  For the purpose of this paper, we set $b=1$.

In order to prove~\eqref{Laplacelimit}, we want to write the expectation on the left side as an expectation of a determinantal point process.  For this, it is convenient to introduce a suitable particle process.  

The space of possible particle positions is $\mathcal{L}_m$ given by~\eqref{Lm}.  To a particle $z \in \mathcal{L}_m$, we associate two vertices $x(z) \in \mathtt{W}$ and $y(z) \in \mathtt{B}$ and the edge $(y(z),x(z))$ between them.  For $z \in \mathcal{L}_m$, and since each $z$ is incident to an $a$-face, we let
\begin{equation}
\begin{split} \label{xyz}
x(z)&= z-\frac{1}{2} (-1)^{\eps(z)} e_2 \\
y(z)&= z+\frac{1}{2} (-1)^{\eps(z)} e_2. \\
\end{split}
\end{equation}
This gives the \emph{particle to edge mapping} 
\begin{equation} \label{particleedge}
\mathcal{L}_m \ni z \longleftrightarrow (y(z),x(z)) \in \mathtt{B} \times \mathtt{W}.
\end{equation}
Using the definitions we see that $x(z) \in \mathtt{W}_{\eps(z)}$ and $y(z) \in \mathtt{B}_{\eps(z)}$; see Fig.~\ref{fig:aface}.
\begin{figure}
\begin{center}
\includegraphics[height=5cm]{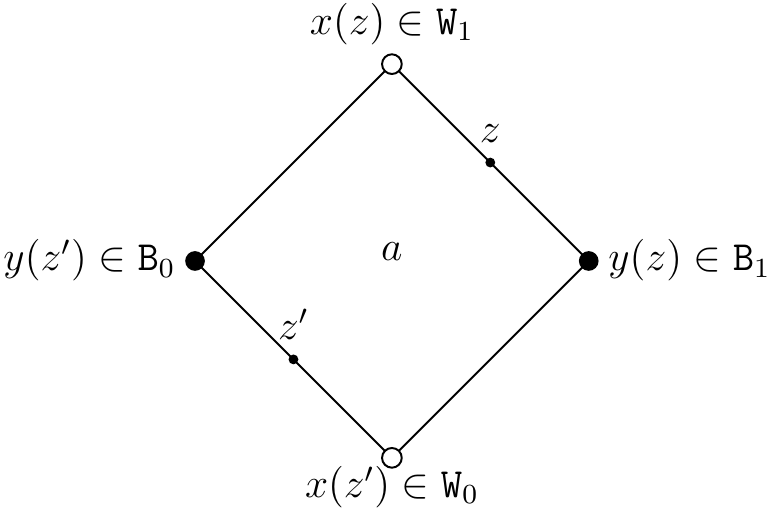}
\caption{An $a$-face on a discrete line with $\eps(z')=0$ and $\eps(z)=1$, $z,z' \in \mathcal{L}_m$.} 
\label{fig:aface}
\end{center}
\end{figure}

From Theorem~\ref{localstatisticsthm}, we know that the dimers, that is the covered edges, form a determinantal point process.  Hence, the mapping~\eqref{particleedge} induces a determinantal point process on $\mathcal{L}_m$.  There is a particle at $z \in \mathcal{L}_m$ if and only if the edge $(y(z),x(z))$ is covered by a dimer.  The next proposition is an immediate consequence of Theorem~\ref{localstatisticsthm}  and the fact that $K_{a,1}(y(z),x(z))=a \mathrm{i} $ for $z \in \mathcal{L}_m$. 

\begin{prop} \label{particlekernel}
The particle process on $\mathcal{L}_m$ defined above is a determinantal point process with correlation kernel $\tilde{K}_m$ given by
\begin{equation}
	\tilde{\mathcal{K}}_m(z,z')=a \mathrm{i} K^{-1}_{a,1} (x(z'),y(z))
\end{equation}
for $z,z' \in \mathcal{L}_m$.
\end{prop}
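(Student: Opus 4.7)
The plan is to read off the claim directly from Theorem~\ref{localstatisticsthm} applied through the particle–to–edge bijection~\eqref{particleedge}. First, I would note that because the map $z \mapsto (y(z),x(z))$ is injective from $\mathcal{L}_m$ into the edge set of the Aztec diamond, a particle configuration $\{z_1,\dots,z_r\}\subset \mathcal{L}_m$ corresponds, under the dimer measure, to the event that the dimer covering contains the edges $\mathtt{e}_i=(y(z_i),x(z_i))$, $i=1,\dots,r$. Hence the correlation function for the particles at $z_1,\dots,z_r$ equals the correlation function for the dimers at $\mathtt{e}_1,\dots,\mathtt{e}_r$, which by Theorem~\ref{localstatisticsthm} is the determinant of
\begin{equation}
L(\mathtt{e}_i,\mathtt{e}_j)=K_{a,1}(y(z_i),x(z_i))\, K_{a,1}^{-1}(x(z_j),y(z_i)).
\end{equation}

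The next step is to verify the stated identity $K_{a,1}(y(z),x(z))=a\mathrm{i}$. From \eqref{xyz} we have $x(z)=y(z)-(-1)^{\eps(z)}e_2$. Recall that $y(z)\in\mathtt{B}_{\eps(z)}$, so in the formula~\eqref{pf:K} one takes $j=\eps(z)$. If $\eps(z)=0$, then $x(z)=y(z)-e_2$ and the relevant line of~\eqref{pf:K} with $b=1$ gives $(a\cdot 1+1\cdot 0)\mathrm{i}=a\mathrm{i}$; if $\eps(z)=1$, then $x(z)=y(z)+e_2$ and the formula gives $(a\cdot 1+1\cdot 0)\mathrm{i}=a\mathrm{i}$ as well. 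So in both cases the $a$-face incidence forces the weight $a$ and the Kasteleyn $\pm e_2$-direction contributes the factor $\mathrm{i}$, as claimed.

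With this evaluation, the factor $K_{a,1}(y(z_i),x(z_i))=a\mathrm{i}$ is independent of $z_i$. A standard conjugation argument for determinantal point processes shows that such a constant factor can be absorbed into a new kernel without changing the determinants: specifically,
\begin{equation}
\det\bigl(L(\mathtt{e}_i,\mathtt{e}_j)\bigr)_{i,j=1}^r=\det\bigl(a\mathrm{i}\, K_{a,1}^{-1}(x(z_j),y(z_i))\bigr)_{i,j=1}^r,
\end{equation}
because the row-constant $a\mathrm{i}$ multiplies out uniformly (the scalar factors $\prod_i (a\mathrm{i})$ cancel with the identification of the joint intensity on either side). Defining $\tilde{\mathcal{K}}_m(z,z'):=a\mathrm{i}\,K_{a,1}^{-1}(x(z'),y(z))$ then gives a determinantal kernel for the particle process, which is exactly the formula claimed.

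There is essentially no obstacle here: the proposition is a transport of structure via a deterministic labeling of edges by points of $\mathcal{L}_m$, and the only nontrivial computation is the case analysis on $\eps(z)$ showing that $K_{a,1}(y(z),x(z))$ is constantly $a\mathrm{i}$. Care must merely be taken with the orientation convention $(y(z_j),x(z_i))\mapsto (x(z_j),y(z_i))$ appearing in the stated kernel, which matches the arguments of $K_{a,1}^{-1}$ used in~\cite{CY:13,CJ:16} and is consistent with the row/column labeling induced by the particle–edge correspondence.
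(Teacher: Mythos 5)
Your argument is correct and takes essentially the same route as the paper, which deduces the proposition directly from Theorem~\ref{localstatisticsthm} together with the evaluation $K_{a,1}(y(z),x(z))=a\mathrm{i}$, verified exactly by your case analysis on $\eps(z)$. The only superfluous step is the ``conjugation/cancellation'' remark: since $K_{a,1}(y(z_i),x(z_i))=a\mathrm{i}$ for every $i$, the matrix $L(\mathtt{e}_i,\mathtt{e}_j)=a\mathrm{i}\,K_{a,1}^{-1}(x(z_j),y(z_i))$ already coincides entrywise with $\tilde{\mathcal{K}}_m(z_i,z_j)$, so nothing needs to be absorbed or cancelled.
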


Recall the definitions of~\eqref{Ipqk} and~\eqref{mu_m}. Let $\{z_i \}$ denote the particle process on $\mathcal{L}_m$ as defined above, and let
\begin{equation}
\mathbbm{I}_{p,q,k}(z) = \left\{\begin{array}{ll}
1 & \mbox{if $z \in I_{p,q,k}$} \\
0 & \mbox{if $z \not \in I_{p,q,k}$} \end{array}  \right.
\end{equation}
be the indicator function for $I_{p,q,k}$. The change in the height function across the interval $I_{p,q,k}$ can be written in terms of the particle process, namely, we have the equation
\begin{equation}
\Delta h(I_{p,q,k})= 4 \sum_{i} (-1)^{\eps(z_i)} \mathbbm{I}_{p,q,k}(z_i),
\end{equation}
where $\sum_i$ is the sum over all particles in the process. From the above equation and~\eqref{mu_m}, we obtain
\begin{equation}
\mu_m(\{\beta_q\} \times A_p ) = \frac{1}{M} \sum_{k=1}^M \sum_i (-1)^{\eps(z_i)} \mathbbm{I}_{p,q,k}(z_i).
\end{equation}
If we let 
\begin{equation}
\psi(z)=\sum_{k=1}^M \sum_{p=1}^{L_2} \sum_{q=1}^{L_1} w_{p,q} (-1)^{\eps(z)} \mathbbm{I}_{p,q,k}(z),
\end{equation}
we see that
\begin{equation}
 \sum_{p=1}^{L_2} \sum_{q=1}^{L_1} w_{p,q} \mu_m(\{\beta_q\} \times A_p) =\frac{1}{M} \sum_i \psi (z_i).
\end{equation}
Since $\{z_i \}$ is a determinantal process on $\mathcal{L}_m$ with correlation kernel $\tilde{K}_m$ we immediately obtain
\begin{equation}
\begin{split} \label{finiteFredholm}
&\mathbb{E}\left[\exp \left(  \sum_{p=1}^{L_2} \sum_{q=1}^{L_1} w_{p,q} \mu_m(\{\beta_q\} \times A_p) \right) \right]  
=\mathbb{E}\left[\exp \left(  \frac{1}{M} \sum_i \psi (z_i) \right) \right]  \\
&=\mathbb{E}\left[\prod_i \left( 1+ \left( e^{\frac{1}{M} \psi(z_i)}-1 \right) \right)   \right] 
	=\det\left(\mathbbm{I} +(e^{\frac{1}{M} \psi} -1)\tilde{\mathcal{K}}_m\right).
\end{split}
\end{equation}
The matrix in the above determinant is indexed by entries of $\mathcal{L}_m$, which is a finite set.
The above formula will be the basis of our asymptotic analysis which will lead to a proof of~\eqref{Laplacelimit}.   To perform this asymptotic analysis, we need some asymptotic formulas which we state in the next section. 

\section{Asymptotic formulas}\label{section:asymptoticformulas}

This section brings forward some of the  key asymptotic results for the liquid-gas boundary from~\cite{CJ:16}. These results are refined specifically for the particle process introduced in Section~\ref{section:particles} and the corresponding scaling associated to $\mathcal{L}_m$.   The origin of these results is made explicit in Section~\ref{sec:proofasymptotics}.

Let
\begin{equation} \label{ctilde}
\tilde{c} (u_1,u_2)=2(1+a^2) + a(u_1+u_1^{-1})(u_2+u_2^{-1}),
\end{equation} 
which is related to the so-called \emph{characteristic polynomial} for the dimer model~\cite{KOS:06}; see~\cite[(4.11)]{CJ:16} for an explanation. Write
\begin{equation}
h(\eps_1,\eps_2)=\eps_1(1-\eps_2)+\eps_2(1-\eps_1).
\end{equation}
We set 
\begin{equation} \label{G}
\mathcal{C}=\frac{1}{\sqrt{2c}} (1-\sqrt{1-2c} ).
\end{equation}

\begin{remark}
  Note that the quantity $\mathcal{C}$ given above is exactly equal to
  the quantity  $|G(\mathrm{i})|$ defined  in~\cite[Eq. (2.6)]{CJ:16},
  that  is  $|G(\mathrm{i})|=\mathcal{C}$.   We  have  simplified  the
  notation since  only $|G(\mathrm{i})|$  appears in  our computations
  here and  the complete definition  of $G$  along with its  choice of
  branch cut is not necessary.
\end{remark}

The full-plane gas phase inverse Kasteleyn matrix is given by
\begin{equation}\label{gasphaseeqn}
\mathbb{K}^{-1}_{1,1}(x,y)=-\frac{\mathrm{i}^{1+h(\eps_x,\eps_y)}}{(2 \pi \mathrm{i})^2} 
\int_{\Gamma_1} \frac{du_1}{u_1}  \int_{\Gamma_1} \frac{du_2}{u_2} \frac{a^{\eps_y} u_2^{1-h(\eps_x,\eps_y)} +a^{1-\eps_y} u_1 u_2^{h(\eps_x,\eps_y)}}{\tilde{c}(u_1,u_2) u_1^{\frac{x_1-y_1+1}{2}} u_2^{\frac{x_2-y_2+1}{2}}},
\end{equation}
where 
$x=(x_1,x_2)\in \mathtt{W}_{\eps_x}$ and $y=(y_1,y_2)\in \mathtt{B}_{\eps_y}$ with $\eps_x,\eps_y \in \{0,1\}$, and $\Gamma_1$ is the positively oriented unit circle; see~\cite[Section 4]{CJ:16} for details.  For the rest of this paper, $\Gamma_R$ denotes a positively oriented circle of radius $R$ around the origin.  
 From~\cite{CJ:16}, it is natural to write
\begin{equation}
K^{-1}_{a,1} (x,y)=\mathbb{K}^{-1}_{1,1}(x,y)-\mathbb{K}_{\mathrm{A}}
\end{equation}
which  defines  $\mathbb{K}_{\mathrm{A}}$.  The  full  expression  for
$\mathbb{K}_{\mathrm{A}}$   is   complicated,  see~\cite[Theorem 2.3]{CJ:16}.   Its
asymptotics  is  given  in  Section~\ref{sec:proofasymptotics}.  Since
$K_{a,1}(x(z),y(z))=a \mathrm{i}$, this leads us to define
\begin{equation} \label{Ktilde}
\begin{split}
\tilde{\mathcal{K}}_{m,0}(z,z')&=a \mathrm{i} \mathbb{K}^{-1}_{1,1} (x(z'),y(z)) \\
\tilde{\mathcal{K}}_{m,1}(z,z')&=a \mathrm{i} \mathbb{K}_{\mathrm{A}} (x(z'),y(z)) \\
\end{split}
\end{equation}
so that
\begin{equation}
\tilde{\mathcal{K}}_{m}(z,z')=\sum_{\delta \in \{0,1\}}(-1)^{\delta} \tilde{\mathcal{K}}_{m,\delta}(z,z').
\end{equation}
For $z \in \mathcal{L}_m(q,k)$, we define 
\begin{equation}
\begin{split} \label{gi}
\gamma_1 (z)& = \frac{t(z)}{\lambda_1 (2m)^{1/3}} \beta_q -\frac{1}{3} \beta_q^3, \\
\gamma_2(z) & = \eps(z) +\beta_m(q,k), \\
\gamma_3(z) &= 2(t(z)-\tau_m(q))+\beta_m(q,k).
\end{split}
\end{equation}
We also introduce the relation 
\begin{equation}
\mathtt{g}_{\eps_1,\eps_2} =
\left\{
\begin{array}{ll}
\frac{\mathrm{i} \left(\sqrt{a^2+1}+a\right)}{1-a}
 &  \mbox{if } (\eps_1,\eps_2)=(0,0) \\
\frac{\sqrt{a^2+1}+a-1}{\sqrt{2a} (1-a) }
 & \mbox{if } (\eps_1,\eps_2)=(0,1) \\
-\frac{\sqrt{a^2+1}+a-1}{\sqrt{2a} (1-a) }
&\mbox{if } (\eps_1,\eps_2)=(1,0)\\
\frac{\mathrm{i}\left(\sqrt{a^2+1}-1\right)}{(1-a) a}
&\mbox{if } (\eps_1,\eps_2)=(1,1).
\end{array}
\right.
\end{equation}
Let $z,z' \in \mathcal{L}_m$ and write $x(z')=(x_1(z'),x_2(z'))$ and $y(z)=(y_1(z),y_2(z))$.  Motivated by the asymptotic results from~\cite{CJ:16}, compare~\cite[Theorem 2.7]{CJ:16}, we define for $\delta = 0,1$,
\begin{equation}
\label{Kmdelta}
 \mathcal{K}_{m,\delta} (z,z') =\frac{1}{\lambda_1 c_0 a \mathrm{i} \mathtt{g}_{\eps(z'),\eps(z)}}  \mathrm{i}^{y_1(z)-x_1(z')-1} e^{\gamma_1(z') -\gamma_1(z)}  \mathcal{C}^{\frac{1}{2}(2+x_1(z')-x_2(z')+y_2(z)-y_1(z))} \tilde{\mathcal{K}}_{m,\delta}(z,z') 
\end{equation}
and 
\begin{equation} \label{Kdeltaeqn}
\mathcal{K}_m(z,z')=\sum_{\delta \in \{0,1\}} (-1)^{\delta} \mathcal{K}_{m,\delta}(z,z').
\end{equation} 
$\mathcal{K}_{m,\delta} (z,z')$ is the object that will have nice scaling limits and that we can control as $m\to\infty$, see Proposition~\ref{Asymptotics} below.
If $z \in \mathcal{L}_m(q,k)$ and $z' \in \mathcal{L}_m(q',k')$, a computation using~\eqref{tz},~\eqref{xyz} and~\eqref{gi} gives
\begin{equation}\label{xyg2}
\frac{2+x_1(z')-x_2(z')+y_2(z)-y_1(z)}{2} = \gamma_2(z')-\gamma_2(z)+2-2\eps(z')
\end{equation}
and
\begin{equation} \label{xyg3}
y_1(z)-x_1(z')-1=\gamma_3(z)-\gamma_3(z')+2\eps(z').
\end{equation}
Applying these formulas in~\eqref{Kmdelta} and using the fact that $\lambda_1 c_0 =\frac{1}{2} \sqrt{1-2c}$ we obtain
\begin{equation}
\begin{split} \label{Ktilde2}
\tilde{\mathcal{K}}_{m,\delta}(z,z')&= \frac{a \mathrm{i}}{2} \sqrt{1-2c} \mathtt{g}_{\eps(z'),\eps(z)} \mathcal{C}^{2 \eps(z')-2} (-1)^{\eps(z')} e^{\gamma_1(z') -\gamma_1(z)} \\
& \times
 \mathcal{C}^{\gamma_2(z) -\gamma_2(z')} \mathrm{i}^{\gamma_3(z')-\gamma_3(z)} \mathcal{K}_{m,\delta}(z,z').
\end{split}
\end{equation}
From this, we see that
\begin{equation} \label{KmKm}
\mathtt{K}_m(z,z') =  \frac{a \mathrm{i}}{2} \sqrt{1-2c} \mathtt{g}_{\eps(z'),\eps(z)} \mathcal{C}^{2 \eps(z')-2}(-1)^{\eps(z')}\mathcal{K}_{m}(z,z')
\end{equation}
is also a correlation kernel for the particle process on $\mathcal{L}_m$. See Section~\ref{sec:proofasymptotics} for specific signposting of where these formulas come from. 

The next proposition contains the asymptotic formulas and estimates that we will need in the proof of our main theorem.   The proof will be given in Section~\ref{sec:proofasymptotics}.
\begin{prop} \label{Asymptotics}
Let $z \in \mathcal{L}_m(q,k)$, $z \in \mathcal{L}_m(q',k')$ and write $t=t(z)$, $t'=t(z')$.  Consider $\mathcal{K}_{m,\delta}(z,z')$ defined by~\eqref{Kmdelta}. 
The asymptotic formulas and estimates below are uniform as $m\to\infty$ for $|t|,|t'| \leq C(2m)^{1/3}$, for any fixed $C>0$ and $1 \leq k \leq M$. 
\begin{enumerate}
\item For any $q,q'$,
\begin{equation}
\mathcal{K}_{m,1}(z,z') = \frac{1}{\lambda_1(2m)^{1/3}} \tilde{\mathcal{A}}\left(\beta_{q'},\frac{t'}{\lambda_1(2m)^{1/3}} ; \beta_{q},\frac{t}{\lambda_1(2m)^{1/3}} \right)(1+o(1)).
\end{equation}
		where $\tilde{\mathcal{A}}$ is given in~\eqref{eq:Airymod}.
\item If $q\not=q'$, then 
\begin{equation}
\mathcal{K}_{m,0}(z,z') = \frac{1}{\lambda_1(2m)^{1/3}} \phi_{\beta_{q'},\beta_q} \left( \frac{t'}{\lambda_1(2m)^{1/3}}, \frac{t}{\lambda_1(2m)^{1/3}} \right)(1+o(1))
\end{equation}
		where $c_1>0$ is a constant adn $\phi_{\beta_{q'},\beta_q}$ is given in~\eqref{eq:Airyphi}.
\item Assume that $q=q'$ and $k>k'$. Then there are constants $c_1,c_2,C>0$, so that
\begin{enumerate}
\item 
\begin{equation}
\mathcal{K}_{m,0}(z,z')= \frac{1}{\lambda_1(\log m)} \frac{1}{\sqrt{4 \pi (k-k')}} \exp \left(-\frac{1}{4(k-k')} \left( \frac{t'-t}{\lambda_1 \log m }\right)^2 \right) (1+o(1))
\end{equation}
if $|t'-t| \leq c_2( (k-k')( \log m )^2)^{7/12}$,
\item 
\begin{equation}
|\mathcal{K}_{m,0}(z,z')| \leq\frac{C}{(\log m)\sqrt{k-k'}} \exp\left(-\frac{c_1}{(k-k')} \left( \frac{t'-t}{\lambda_1 \log m }\right)^2 \right) 
\end{equation}
if $ c_2( (k-k')( \log m )^2)^{7/12} \leq  |t'-t| \leq \lambda_2 (k-k') (\log m)^2$,
\item 
and
\begin{equation}
|\mathcal{K}_{m,0}(z,z')| \leq C e^{-c_1(k-k')(\log m)^2} 
\end{equation}
if $|t'-t| \geq \lambda_2 (k-k') (\log m)^2$.
\end{enumerate}

\item Assume that $q=q'$ and $k<k'$. Then there are constants $c_1,C>0$ so that
\begin{equation}
|\mathcal{K}_{m,0}(z,z')| \leq C e^{-c_1(k'-k)(\log m)^2}.
\end{equation}
\item  Assume that $q=q'$ and $k=k'$.  Then there are constants $c_1,C>0$ so that
\begin{equation}
|\mathcal{K}_{m,0}(z,z')| \leq C e^{-c_1|t'-t|}.
\end{equation}

\end{enumerate}
\end{prop}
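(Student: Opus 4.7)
The proof is a saddle-point analysis of the explicit double-contour integral representations of the gas-phase inverse Kasteleyn matrix $\mathbb{K}_{1,1}^{-1}$ in (\ref{gasphaseeqn}) and of the Airy-correction piece $\mathbb{K}_{\mathrm{A}}$ recalled from \cite{CJ:16}. The prefactors in the definition (\ref{Kmdelta}) of $\mathcal{K}_{m,\delta}$, namely the conjugating factors $e^{\gamma_1(z')-\gamma_1(z)}\mathcal{C}^{\gamma_2(z)-\gamma_2(z')}\mathrm{i}^{\gamma_3(z')-\gamma_3(z)}$, are engineered precisely so that, after substituting the parameterization (\ref{zqks}) for the particle positions and invoking the identities (\ref{xyg2})--(\ref{xyg3}) to eliminate the $\eps$-dependence from the exponent, the integrand takes a normalized form amenable to steepest-descent analysis with critical point on the circle $|u_j|=\mathcal{C}$.

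For Part (1), I would work from the representation of $\mathbb{K}_{\mathrm{A}}$ in \cite{CJ:16}, whose phase function has a double critical point at the liquid-gas boundary value $\xi=-\tfrac{1}{2}\sqrt{1-2c}$. Under the local rescaling $u_j\mapsto \mathcal{C}(1+w_j/(2m)^{1/3})$ adapted to the scale parameters $\lambda_1,\lambda_2,c_0$ in (\ref{eq:scalingparameters}), the action Taylor-expands to a cubic and the double integral reduces, to leading order, to the standard double Airy-function representation of $\tilde{\mathcal{A}}(\beta_{q'},\zeta';\beta_q,\zeta)$ with $\zeta=t/(\lambda_1(2m)^{1/3})$. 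Uniform control on compact windows $|t|,|t'|\leq C(2m)^{1/3}$ follows by standard steepest-descent deformations and exponential tail bounds away from the saddle. For Part (2), where $q\neq q'$ and $\beta_m(q,k)-\beta_m(q',k')$ is of macroscopic order $m^{2/3}$, I would first take the residue in $u_2$ inside $\Gamma_1$ at the zero of $\tilde{c}(u_1,u_2)$, reducing (\ref{gasphaseeqn}) to a single integral in $u_1$ whose saddle, governed by $\beta_q-\beta_{q'}$, is Gaussian-type; a second-order expansion produces exactly $\phi_{\beta_{q'},\beta_q}$, with the $(\log m)^2$ contribution from $k$ and $k'$ absorbed in the $(1+o(1))$.

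For Parts (3)--(5), where $q=q'$, the key observation is that the separation $\beta_m(q,k)-\beta_m(q,k')=2\lambda_2(k-k')(\log m)^2$ is large compared to $O(1)$ but asymptotically small compared to $m^{2/3}$. After the same residue reduction, the single-variable saddle is displaced from $u_1=\mathcal{C}$ by an amount controlled by the ratio of $\Delta t=t'-t$ to $\Delta\beta=2\lambda_2(k-k')(\log m)^2$. Three regimes then arise naturally: when $|\Delta t|\ll \Delta\beta^{7/12}$ the quadratic Taylor expansion around the displaced saddle is sharp and reproduces the Gaussian of Part (3)(a); in the intermediate window, a direct bound along a chosen contour using the negative sign of $\mathrm{Re}$ of the exponent yields Part (3)(b); and for $|\Delta t|\geq \lambda_2\Delta\beta$, as well as for $k\leq k'$ (where the saddle is on the wrong side), one deforms $\Gamma_1$ into a sector where $\mathrm{Re}$ of the exponent is strictly negative by a macroscopic amount, producing the pure exponential decay estimates in Parts (3)(c), (4), and (5). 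Part (5) with $k=k'$ follows from the classical exponential decay of the translation-invariant gas-phase kernel after a small circle deformation.

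The main obstacle is the uniformity in the intermediate regime (3)(b). The exponent $7/12$ reflects a delicate balance between the quadratic and cubic Taylor coefficients of the action near the displaced saddle: below that scale the quadratic term dominates and the Gaussian approximation is sharp, above it the cubic correction becomes visible but must still produce a bound of Gaussian shape. Constructing a single descent contour that interpolates globally between the quadratic-saddle regime and the exponentially-decaying regime, and tracking the pointwise size of the integrand on it uniformly in $k-k'$, $|\Delta t|$ and $m$, will be the most technical part of the argument. The other estimates reduce to fairly mechanical steepest-descent bookkeeping once the contours are correctly chosen.
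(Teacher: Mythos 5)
Your plan is sound in outline, but it takes a genuinely different route from the paper: you propose to redo the full steepest-descent analysis of the double-contour integrals, whereas the paper performs no new saddle-point analysis at all. Its proof of Proposition~\ref{Asymptotics} is a reduction to two imported results from~\cite{CJ:16}: parts (1)--(2) follow from the boundary asymptotics of $\mathbb{K}_{\mathrm{A}}$ and $\mathbb{K}^{-1}_{1,1}$ (Theorem~\ref{Airyasymptotics}, i.e.\ \cite[Theorem 2.7]{CJ:16}, adjusted for the $O((\log m)^2)$ positional shift) after matching the scalings via the identity $\alpha_y\beta_y+\tfrac23\beta_y^3=\gamma_1(z)$; parts (3)--(5) follow from the exact representation~\eqref{KinverseE} of $\mathbb{K}^{-1}_{1,1}$ through the quantities $E_{\mathtt{k},\mathtt{l}}$ together with the asymptotics and bounds of Lemma~\ref{Ekasymptotics} (\cite[Lemmas 4.6--4.7]{CJ:16}), applied with $A_{m,i},B_{m,i}$ computed from~\eqref{AmBmt}. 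In particular, the exponential-decay cases (3)(c), (4), (5) are not obtained by new contour deformations but from the purely algebraic observation that the conjugation prefactor $\mathcal{C}^{\gamma_2(z')-\gamma_2(z)+2-2\eps'}=\mathcal{C}^{-2B_{m,i}+O(1)}$ combines with the $\mathcal{C}^{2b_{m,i}}$ from the $E$-bounds into $\mathcal{C}^{2(b_{m,i}-B_{m,i})}$ with $\mathcal{C}<1$, and $b_{m,i}-B_{m,i}$ is large in exactly those regimes. What each approach buys: yours is self-contained but essentially re-proves \cite[Thm.\ 2.7, Lemmas 4.6--4.7]{CJ:16} (your hardest step, the uniform $7/12$-regime matching, is precisely the content of~\eqref{Easymptotics} with its $O(b_m^{-1/4})$ error, since $|a_m|\le b_m^{7/12}$ makes the cubic correction $a_m^3/b_m^2\le b_m^{-1/4}$); the paper's is short and robust but only as strong as the citations, its real content being the coordinate translation and $\mathcal{C}$-power bookkeeping.

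One caution if you carry out your plan: when you bound cases (3)(c), (4), (5) by ``deforming into a sector where the real part of the exponent is negative,'' remember that the object to be estimated is the conjugated kernel $\mathcal{K}_{m,0}$, whose prefactor $\mathcal{C}^{\gamma_2(z)-\gamma_2(z')}$ is exponentially large of size $\mathcal{C}^{-2\lambda_2(k-k')(\log m)^2}$ when $k>k'$ (and exponentially small when $k<k'$). The decay you extract from the deformed contour must be shown to dominate this factor uniformly in $k-k'$, $t'-t$ and $m$; this is exactly the $b_{m,i}$ versus $B_{m,i}$ accounting in the paper's proof, and your sketch currently treats the prefactors only as a normalization, leaving this quantitative comparison implicit. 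Similarly, for part (5) your appeal to the translation-invariant gas-phase decay is fine, but you must note that for $k=k'$ the conjugation factors are bounded (as in~\eqref{g1est}), which is what makes that reduction legitimate.
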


\section{Proof of Theorem~\ref{thm:moments}} \label{sec:mainthmproof}

In this section, we give the proof of Theorem~\ref{thm:moments} relying on Proposition~\ref{Asymptotics} and Lemmas~\ref{U0} and~\ref{Tepsilon} whose proofs are deferred to later in the paper.  To prove Theorem~\ref{thm:moments}, we analyze the right side of~\eqref{finiteFredholm} via its cumulant or trace expansion.  Since $\mathtt{K}_m$, given by (\ref{KmKm}), is also a correlation kernel for the particle process, we have
\begin{equation}
\det ( \mathbbm{I}  + (e^{\frac{1}{M} \psi}-1)\tilde{\mathcal{K}}_m) = \det ( \mathbbm{I}  + (e^{\frac{1}{M} \psi}-1)\mathtt{K}_m). 
\end{equation}
For $|w_{p,q}| \leq R$ with $R$ sufficiently small, we have the expansion 
\begin{equation}
\begin{split} \label{traceexp}
&\log \det ( \mathbbm{I}  + (e^{\frac{1}{M} \psi}-1)\mathtt{K}_m)_{\mathcal{L}_m)} \\
&= \sum_{s=1}^\infty \frac{1}{M^s} \sum_{r=1}^s \frac{(-1)^{r+1}}{r} \sum_{\substack{\ell_1+\dots+\ell_r=s \\ \ell_1,\dots,\ell_r \geq 1}} \frac{1}{\ell_1! \dots \ell_r!} \mathrm{tr} \left( \psi^{\ell_1} \mathtt{K}_m \dots \psi^{\ell_r} \mathtt{K}_m \right).
\end{split}
\end{equation}
For a simple proof of this expansion, see e.g.\ p.\ 450 in~\cite{BD:14}.
Since $\mathcal{L}_m$ is finite we have a finite-dimensional operator, and the expansion is convergent if $R$ is small enough.
Note that \emph{a priori}, $R$ could depend on $m$. It is a consequence of the proof below that we are able to choose $R$ independent of $m$. 

Since all the discrete intervals $I_{p,q,k}$ have disjoint support,
\begin{equation}\label{eq:psiexpand}
\psi(z)^\ell = \sum_{k=1}^M \sum_{p=1}^{L_2} \sum_{q=1}^{L_1} w^\ell_{p,q} (-1)^{\ell \eps (z)} \mathbbm{I}_{p,q,k}(z),
\end{equation}
for all $l\geq 1$. 
In what follows, we use the notation $\overline{j} \in S^r$ to denote the sum over all $j_1,\dots, j_r \in S$ for some set $S$ and we assume the notation to be cyclic with respect to $r$, that is $j_{r+1}=j_1$. Also, we use the notation $[N]=\{1,\dots,N\}$. Thus, we have from~\eqref{eq:psiexpand}
\begin{equation}
\begin{split} \label{trace1}
\mathrm{tr} \left( \psi^{\ell_1} \mathtt{K}_m \dots \psi^{\ell_r} \mathtt{K}_m \right)
&= \sum_{\overline{z} \in (\mathcal{L}_m)^r}  \sum_{\overline{k} \in [M]^r} \sum_{\overline{p} \in [L_2]^r} \sum_{\overline{q} \in [L_1]^r}
\prod_{i=1}^rw_{p_i,q_i}^{\ell_i} (-1)^{\ell_i \eps(z_i)}   \mathbbm{I}_{p_i,q_i,k_i}(z_i) \mathtt{K}_m(z_i,z_{i+1}) \\
&= \sum_{\overline{\eps} \in\{0,1\}^r}   \sum_{\overline{k} \in [M]^r} \sum_{\overline{p} \in [L_2]^r} \sum_{\overline{q} \in [L_1]^r}
\prod_{i=1}^rw_{p_i,q_i}^{\ell_i} (-1)^{\ell_i \eps_i}
\sum_{\overline{z} \in (\mathcal{L}_m)^r}
\prod_{i=1}^r 
\mathbbm{I}_{p_i,q_i,k_i}^{\eps_i} (z_i)
  \mathtt{K}_m(z_i,z_{i+1}). \\
\end{split}
\end{equation} 
Here, $\mathbbm{I}_{p,q,k}^{\eps}$ is the indicator function on $\mathcal{L}_m$ for the set
\begin{equation}
{I}_{p,q,k}^{\eps}= \{z \in I_{p,q,k} ; \eps(z)=\eps \}
\end{equation}
for $\eps \in \{0,1\}$. Write $\mathtt{K}_m = \sum_{\delta \in \{0,1\}}(-1)^{\delta} \mathtt{K}_{m,\delta}$, similarly to~\eqref{Kdeltaeqn}, and plug it into~\eqref{trace1} to get
\begin{equation} \label{trace2}
\begin{split}
\mathrm{tr} \left( \psi^{\ell_1} \mathtt{K}_m \dots \psi^{\ell_r} \mathtt{K}_m \right)
&= \sum_{\overline{\eps},\overline{\delta} \in\{0,1\}^r}  \prod_{i=1}^r (-1)^{\ell_i \eps_i} 
  \sum_{\overline{k} \in [M]^r} \sum_{\overline{p} \in [L_2]^r} \sum_{\overline{q} \in [L_1]^r}
\prod_{i=1}^rw_{p_i,q_i}^{\ell_i} (-1)^{\delta_i} \\
&\times \sum_{\overline{z} \in (\mathcal{L}_m)^r}
\prod_{i=1}^r 
\mathbbm{I}_{p_i,q_i,k_i}^{\eps_i} (z_i)
  \mathtt{K}_{m,\delta_i}(z_i,z_{i+1}). \\ 
\end{split}
\end{equation}
In order to carry out the asymptotic analysis, we will split this trace into four parts. Let 
\begin{equation}
D_r=\{0,1\}^r \times [M]^r \times [L_2]^r \times [L_1]^r.
\end{equation}
Define
\begin{equation}
	D_{r,0} = \{(\overline{\delta},\overline{k},\overline{p},\overline{q}) \in D_r; \delta_i =0,k_i=k_{i+1},p_i=p_{i+1} \mbox{~and~} q_i=q_{i+1},1\leq i \leq r\},
\end{equation}
\begin{equation}
D_{r,1} = \{ (\overline{\delta},\overline{k},\overline{p},\overline{q}) \in D_r; \delta_i =0, q_i=q_{i+1} \mbox{~for~}1\leq i \leq r \mbox{~and~}p_i\not=p_{i+1} \mbox{~for some~} i \},  
\end{equation}
\begin{equation}
D_{r,2} = \{ (\overline{\delta},\overline{k},\overline{p},\overline{q}) \in D_r; \delta_i =0,q_i=q_{i+1},p_i=p_{i+1}\mbox{~for~}1\leq i\leq r  \mbox{~and~} k_i\not=k_{i+1} \mbox{~for some~}i\},
\end{equation}
and
\begin{equation}
D_{r,3} = \{ (\overline{\delta},\overline{k},\overline{p},\overline{q}) \in D_r; \delta_i =1 \mbox{~or~} q_i \not = q_{i+1} \mbox{~for some~} i \}.
\end{equation}
Then, we have $D_r=D_{r,0} \cup D_{r,1} \cup D_{r,2} \cup D_{r,3}$. Introduce
\begin{equation}  \label{Tjmrell}
T_j(m,r,\overline{l}) = \sum_{\overline{\eps} \in \{0,1\}^r}  \prod_{i=1}^r (-1)^{\ell_i \eps_i} \sum_{(\overline{\delta},\overline{k},\overline{p},\overline{q}) \in D_{r,j}} \prod_{i=1}^r w_{p_i,q_i}^{\ell_i} (-1)^{\delta_i}  
\sum_{\overline{z} \in( \mathcal{L}_m)^r } \prod_{i=1}^r \mathbbm{I}_{p_i,q_i,k_i}^{\eps_i} (z_i)
  \mathtt{K}_{m,\delta_i}(z_i,z_{i+1}),
\end{equation}
for  $0\leq j \leq 3$.  Then, by~\eqref{traceexp} and~\eqref{trace2} we have
\begin{equation}
\log \det( \mathbbm{I} +(e^{\frac{1}{M} \psi}-1) \mathtt{K}_m) = \sum_{j=0}^3 U_j(m)
\end{equation}
where we define
\begin{equation} \label{Ujm}
U_j(m)=  \sum_{s=1}^\infty \frac{1}{M^s} \sum_{r=1}^s \frac{(-1)^{r+1}}{r} \sum_{\substack{\ell_1+\dots+\ell_r=s \\ \ell_1,\dots,\ell_r \geq 1}} \frac{T_j(m,r,\overline{\ell})}{\ell_1! \dots \ell_r!}.
\end{equation}
Our goal is now to show that $U_j(m)$ tends to zero as $m$ tends  infinity for $j=0,1,2$ and then to compute the limit of $U_3(m)$, which will give us what we want. The proof of $U_0(m)$ tends to zero as $m$ tends to infinity is rather involved and requires a separate argument.  We formulate it as a lemma but postpone the proof until Section~\ref{sec:U0proof}.

\begin{lemma} \label{U0}
There is an $R>0$ such that $\lim_{m\to \infty}U_0(m)=0$ uniformly for $|w_{p,q}| \leq R$. 
\end{lemma}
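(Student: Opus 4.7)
\textbf{Proof plan for Lemma~\ref{U0}.}

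The plan is to reassemble $U_0(m)$ into a sum of $M L_1 L_2$ single-line log-determinants, and then to bound each of them by $O(1/M^2)$, giving $|U_0(m)| = O(1/M) \to 0$. The restriction in $D_{r,0}$ to $\delta_i = 0$ and cyclically constant $(k_i,p_i,q_i)$ forces a single common triple $(k,p,q)$ to appear in each cyclic trace, so summing the series in~\eqref{Ujm} over $r$ and $\overline\ell$ yields
\[
U_0(m) = \sum_{k=1}^M \sum_{p=1}^{L_2}\sum_{q=1}^{L_1} \log\det\bigl(\mathbbm{I} + (e^{\psi_{p,q,k}/M} - 1)\mathtt{K}_{m,0}\bigr)_{\ell^2(I_{p,q,k})},
\]
with $\psi_{p,q,k}(z) = w_{p,q}(-1)^{\eps(z)}\mathbbm{I}_{I_{p,q,k}}(z)$. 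On a single line, the kernel $\mathtt{K}_{m,0}$ is translation invariant in the line parameter (up to parity-dependent prefactors from~\eqref{KmKm}) because it comes from the full-plane gas-phase inverse Kasteleyn matrix~\eqref{gasphaseeqn}, and by Proposition~\ref{Asymptotics}(5) it has uniform exponential off-diagonal decay.

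I would next expand each log-determinant as a cumulant series in $1/M$: the coefficient of $(w_{p,q}/M)^n/n!$ is the $n$-th cumulant $C_n$ of the signed count $Y_{p,q,k} = \sum_{z \in I_{p,q,k}}(-1)^{\eps(z)}X_z$ under the determinantal measure with kernel $\mathtt{K}_{m,0}|_{I_{p,q,k}}$. The first cumulant vanishes exactly: by the $\mathbb{Z}/2 \times \mathbb{Z}/2$ symmetry of the two-periodic weights, the diagonal value $\mathtt{K}_{m,0}(z,z)$ is independent of $\eps(z)$, while the even length of $\tilde{A}_{p,m}$ in~\eqref{Apmtilde} forces $|I^0_{p,q,k}| = |I^1_{p,q,k}|$; together these give $C_1 = \sum_z(-1)^{\eps(z)}\mathtt{K}_{m,0}(z,z) = 0$.

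The key estimate is then that each cumulant $C_n$ for $n \ge 2$ is $O(1)$ uniformly in $(k,p,q)$ and in $m$, rather than growing like $|I_{p,q,k}| = O(m^{1/3})$. Heuristically, this reflects the facet nature of the gas phase: since $Y_{p,q,k} = \Delta h(I_{p,q,k})/4$ is a height difference and in the gas phase the height has only $O(1)$ fluctuations over macroscopic distances, the signed linear statistic has uniformly bounded cumulants. Technically, each $C_n$ is a signed multilinear trace; using the contour representation~\eqref{gasphaseeqn} and the exponential decay of Proposition~\ref{Asymptotics}(5), the alternating weight $(-1)^{\eps(z)}$ selects the antidiagonal Fourier mode of $\mathtt{K}_{m,0}$, and the specific algebraic structure of the gas-phase symbol on $|u|=1$ reduces the expected $O(|I|)$ bulk contribution to a boundary term of size $O(1)$. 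With $C_1 = 0$ and $|C_n| \le n! A^n$ for all $n$, taking $R$ sufficiently small makes the cumulant series converge uniformly, so each single-line log-determinant is $O(R^2/M^2)$ and summing over the $M L_1 L_2$ triples gives $|U_0(m)| \le C R^2/M \to 0$.

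The main obstacle is the $O(1)$ bound on the signed cumulants $C_n$ for $n \ge 2$, which is where most of the work of Section~\ref{sec:U0proof} must go. It amounts to a careful Fourier/contour analysis of the gas-phase kernel, exploiting the exact cancellation between the two parity sectors supplied by the antidiagonal weight $(-1)^{\eps(z)}$ together with the exponential decay of Proposition~\ref{Asymptotics}(5).
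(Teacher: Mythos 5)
Your reduction of $U_0(m)$ to a sum of single-line log-determinants with the pure gas-phase kernel, the use of translation invariance along a line, the exponential decay (Proposition~\ref{Asymptotics}(5), i.e.\ Lemma~\ref{Lepsilonbound}), and the splitting of each cyclic trace into an extensive ``bulk'' term plus a bounded boundary term is exactly the architecture of the paper's proof in Section~\ref{sec:U0proof}. The genuine gap is the central cancellation that you defer: the claim that the signed cumulants $C_n$, $n\ge 2$, are $O(1)$ rather than $O(|I_{p,q,k}|)=O(m^{1/3})$ does not follow from the heuristic that gas-phase height fluctuations are bounded, nor from the statement that $(-1)^{\eps(z)}$ ``selects the antidiagonal Fourier mode''. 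What actually kills the extensive part, simultaneously for every cumulant, is an exact algebraic identity for the $2\times 2$ parity-block symbol $F_\omega(\eps,\eps')=a\,\mathrm{i}^{h(\eps,\eps')}f_{\eps,\eps'}(\omega)$ of the gas kernel: with $\eta(\eps)=(-1)^{\eps}$ one has $\det\bigl(\mathbbm{I}+(e^{\frac{w}{M}\eta}-1)F_\omega\bigr)\equiv 1$ on $|\omega|=1$, which rests on the two relations $f_{0,0}=f_{1,1}$ and $a f_{0,0}-a^2\bigl(f_{0,0}^2+f_{0,1}f_{1,0}\bigr)=0$, proved by a residue computation involving the roots $r_1(\omega),r_2(\omega)$ of $\tilde c(\sqrt u,\omega/\sqrt u)$ (Lemma~\ref{lem:feps}). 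Without identifying and proving this identity, your ``key estimate'' is an assertion, and it is precisely where the content of the lemma lies. Note also that your claim $C_1=0$ already uses the first of these relations (parity-independence of the diagonal value is exactly $f_{0,0}=f_{1,1}$), so even that step requires the computation rather than an appeal to ``symmetry''.

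A smaller point: to sum the cumulant series uniformly in $m$ and get an $R$ independent of $m$, you need the boundary error in your Szeg\H{o}-type replacement (trace over $I_{p,q,k}$ replaced by $|A_{p,m}|$ times the zeroth Fourier mode of the product of symbols) to be bounded by $C^r$ uniformly in $|A_{p,m}|$, with $C$ independent of $r$; this follows from the exponential decay after passing to increment variables, as in~\eqref{Traceest}, and it is what makes each single-line log-determinant $O(1/M^2)$ and hence $|U_0(m)|\le C/M$. You gesture at this but it should be carried out, though it is routine once the decay bound is in hand. In short: right scaffolding, but the decisive cancellation --- the determinant identity for $F_\omega$ --- is missing, and that identity is the heart of the proof.
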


Recall~\eqref{KmKm} and define
\begin{equation} \label{Pepsilonell}
P(\overline{\eps},\overline{\ell}) =\prod_{i=1}^r \frac{a\mathrm{i}}{2} \sqrt{1-2c} (-1)^{(1+\ell_i) \eps_i} \mathtt{g}_{\eps_i,\eps_{i+1}} \mathcal{C}^{\eps_i+\eps_{i+1} -2}.
\end{equation}
Then, we have 
\begin{equation}
T_j(m,r,\overline{\ell}) =\sum_{\overline{\eps} \in \{0,1\}^r} P(\overline{\eps},\overline{\ell})
 \sum_{(\overline{\delta},\overline{k},\overline{p},\overline{q}) \in D_{r,j}} \prod_{i=1}^r w_{p_i,q_i}^{\ell_i} (-1)^{\delta_i}  
\sum_{\overline{z} \in( \mathcal{L}_m)^r } \prod_{i=1}^r \mathbbm{I}_{p_i,q_i,k_i}^{\eps_i} (z_i)
  \mathcal{K}_{m,\delta_i}(z_i,z_{i+1}),
\end{equation}
for $j=1,2,3$. 

Recall~\eqref{tz},~\eqref{zqks},~\eqref{Apmtilde} and~\eqref{Ipqk}.  Define 
\begin{equation}
z_{q,k}^{\eps} (t) =\left(\rho_m +2(t-\tau_m(q)) -\eps+\frac{1}{2} \right) e_1 -\beta_m(q,k)e_2 
\end{equation}
and
\begin{equation} \label{Apm}
A_{p,m} = \{t \in \mathbb{Z}; [\alpha^l_p \lambda_1 (2m)^{1/3}] \leq t \leq [\alpha^r_p \lambda_1(2m)^{1/3}] \},
\end{equation} where we recall the notation $A_p=[\alpha_p^l,\alpha_p^r]$ for all $1 \leq p \leq L_2$. 
Then, we can write
\begin{equation}
I_{p,q,k}^\eps = \{z_{q,k}^{\eps}(t) ; t \in A_{p,m} \}.
\end{equation}
Hence, we can also write
\begin{equation}
\begin{split} \label{Sr}
S_r (\overline{\eps}, \overline{\delta} ,\overline{k} ,\overline{p}, \overline{q}) &:= 
\sum_{\overline{z} \in (\mathcal{L}_m)^r} \prod_{i=1}^r \mathbbm{I}_{p_i,q_i,k_i}^{\eps_i} (z_i) \mathcal{K}_{m,\delta_i} (z_i,z_{i+1})  \\
&= \sum_{\overline{t} \in \mathbb{Z}^r} \prod_{i=1}^r \mathbbm{I}_{A_{p_i,m}} (t_i) \mathcal{K}_{m,\delta_i} (z_{q_i,k_i}^{\eps_i}(t_i), z_{q_{i+1},k_{i+1}}^{\eps_{i+1}}(t_{i+1} )) \\
&= \int_{\mathbb{R}^r} d^r \overline{t} \prod_{i=1}^r \mathbbm{I}_{A_{p_i,m}} ([t_i])\mathcal{K}^{(i)}_{m,\overline{\eps}, \overline{\delta}, \overline{k}, \overline{q}}(t_i,t_{i+1}) 
\end{split}
\end{equation}
where 
\begin{equation}
\mathcal{K}^{(i)}_{m,\overline{\eps}, \overline{\delta}, \overline{k}, \overline{q}}(t,t') = 
\mathcal{K}_{m,\delta_i} (z_{q_i,k_i}^{\eps_i}([t]), z_{q_{i+1},k_{i+1}}^{\eps_{i+1}}([t'] )).
\end{equation} 
With this notation, we see that
\begin{equation} \label{Tj}
T_j(m,r,\overline{\ell}) =\sum_{\overline{\eps} \in \{0,1\}^r} P(\overline{\eps},\overline{\ell})
 \sum_{(\overline{\delta},\overline{k},\overline{p},\overline{q}) \in D_{r,j}} \prod_{i=1}^r w_{p_i,q_i}^{\ell_i} (-1)^{\delta_i} S_r (\overline{\eps}, \overline{\delta} ,\overline{k} ,\overline{p}, \overline{q}) 
\end{equation}
for $j=1,2,3$.

\begin{lemma} \label{Ujsmall} 
There is an $R>0$ such that, for $j=1,2$, $\lim_{m \to \infty} U_j(m)=0$ uniformly in $|w_{p,q}| \leq R$.

\end{lemma}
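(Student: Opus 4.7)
The plan is to exploit the super-exponential decay of $\mathcal{K}_{m,0}$ in the index regimes relevant to $D_{r,1}$ and $D_{r,2}$. Since $\delta_i \equiv 0$ and all $q_i$'s coincide in both cases, every factor appearing in $S_r$ (see~\eqref{Sr}) has the form $\mathcal{K}_{m,0}(z_{q,k}^{\eps}(t), z_{q,k'}^{\eps'}(t'))$ and parts (3)--(5) of Proposition~\ref{Asymptotics} apply. The basic analytic input is the uniform row/column $L^1$ bound
\begin{equation}
\sup_{k,k',\eps,\eps',\,t}\int_{\mathbb{R}} \bigl|\mathcal{K}_{m,0}(z_{q,k}^{\eps}(t), z_{q,k'}^{\eps'}(t'))\bigr|\,dt' \leq C,
\end{equation}
obtained by integrating each of the estimates in parts (3)(a)--(c), (4), (5): the Gaussian profile in (3)(a) integrates to $O(1)$, the tails in (3)(b),(c),(5) are smaller, and (4) integrated against $|A_{p,m}|=O(m^{1/3})$ gives $o(1)$.

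The strategy is then to identify, in each configuration in $D_{r,j}$, a distinguished ``bad'' index $i_0$ at which $|\mathcal{K}_{m,0}(z_{i_0},z_{i_0+1})|\leq C e^{-c_1(\log m)^2}$; bound that factor by its supremum; and successively integrate the remaining $r-1$ factors using the $L^1$ bound above, with one endpoint left free in $A_{p,m}$ to contribute an $m^{1/3}$. For $U_2(m)$, $\overline{k}\in [M]^r$ is cyclically non-constant and therefore must contain at least one ascent $k_{i_0} < k_{i_0+1}$, at which Proposition~\ref{Asymptotics}(4) gives the desired bound. For $U_1(m)$, the intervals $A_{p,m}$ are pairwise disjoint and separated by $\Omega(m^{1/3})$, so at any index $i_0$ with $p_{i_0}\neq p_{i_0+1}$ we have $|t_{i_0}-t_{i_0+1}|\geq \eta m^{1/3}$; combined with the hypothesis $M(\log m)^2=o(m^{1/3})$ this rules out regimes (3)(a),(b) (since both $(M(\log m)^2)^{7/12}$ and $\lambda_2 M(\log m)^2$ are $o(m^{1/3})$), leaving (3)(c), (4), (5), which all give $|\mathcal{K}_{m,0}(z_{i_0},z_{i_0+1})|\leq C e^{-c_1(\log m)^2}$ or better. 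Either way we obtain $|S_r|\leq C^r m^{1/3}\, e^{-c_1(\log m)^2}$, uniformly over admissible tuples.

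Substituting into~\eqref{Tj}, using $|P(\overline{\eps},\overline{\ell})|\leq C_0^r$ from~\eqref{Pepsilonell}, $|w_{p,q}|^{\ell_i}\leq R^{\ell_i}$, the cardinalities $\#D_{r,1}\leq L_1 L_2^r M^r$ and $\#D_{r,2}\leq L_1 L_2 M^r$, the combinatorial bound $\sum_{\ell_1+\dots+\ell_r=s,\,\ell_i\geq 1}1/\prod_i \ell_i!\leq r^s/s!$, and finally absorbing $M^r$ into $M^{-s}$ from~\eqref{Ujm} via $r\leq s$, we arrive at a majorant
\begin{equation}
|U_j(m)|\leq m^{1/3}\, e^{-c_1(\log m)^2}\sum_{s=1}^\infty \frac{(C_1 R)^s s^s}{s!}
\end{equation}
for some $C_1$ independent of $m$, $M$, and $R$. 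Taking $R<1/(C_1 e)$ makes the series converge (by Stirling) while $m^{1/3}\, e^{-c_1(\log m)^2}\to 0$, which establishes the lemma. The main technical obstacle will be the careful verification of the uniform $L^1$ bound and the case analysis ruling out the Gaussian regimes (3)(a),(b) for $U_1$; once these are in place, the remainder is organised bookkeeping to confirm that $R$ can be chosen independently of both $m$ and $M$.
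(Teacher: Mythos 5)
Your proposal is correct and follows essentially the same route as the paper: isolate one factor bounded by $Ce^{-c_1(\log m)^2}$ (for $j=1$ from the $\Omega(m^{1/3})$ separation of the embedded intervals, for $j=2$ from the ascent $k_{i_0}<k_{i_0+1}$ forced by cyclicity, i.e.\ statement (4) of Proposition~\ref{Asymptotics}), control the remaining factors by $L^1$-type bounds from statements (3)--(5) (the paper does this via the change of variables~\eqref{ttauchange}), and sum the cumulant series for $|w_{p,q}|\leq R$ with $R$ small independent of $m$ and $M$. The differences (prefactor $m^{1/3}$ versus the paper's $m^{2/3}$, $r^s/s!$ versus $e^r$ in the multinomial bound, and your more explicit exclusion of regimes (3)(a),(b) for $j=1$) are only cosmetic.
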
 
\begin{proof}
Consider $j=1$ so that $(\overline{\delta},\overline{k},\overline{p},\overline{q}) \in D_{r,1}$.   There is an $i=i_1$ such that $p_{i_1} \not =p_{i_1+1}$ by the definition of $D_{r,1}$.  We have $\delta_i=0$ for all $i$.  Hence, by statements (3) to (5) in Proposition~\ref{Asymptotics},  we have
\begin{equation}
|\mathcal{K}^{(i_1)}_{m,\overline{\eps}, \overline{\delta}, \overline{k}, \overline{q}}(t_{i_1},t_{i_1+1}) | \leq  C e^{-c_1 (\log m)^2} 
\end{equation}
since  $|t_{i_1+1}-t_{i_1}| \geq  C m^{1/3}$  --- note  that the  real
estimate in  the above inequality  is actually  less than or  equal to
$Ce^{-c_1m^{1/3}}$  but  we do  not  need  this  here. All  the  other
$\mathcal{K}^{(i)}$  factors in  the  integrand  in~\eqref{Sr} can  be
estimated      using      statements      (3)      to      (5)      in
Proposition~\ref{Asymptotics}; to make this  argument very precise, we
can use the same type of change of variables (\ref{ttauchange}) below,
we omit the details. From this, we see that
\begin{equation}
|S_r (\overline{\eps}, \overline{\delta} ,\overline{k} ,\overline{p}, \overline{q})| \leq C^r m^{2/3} e^{-c_1(\log m)^2}.
\end{equation}
Consequently, by~\eqref{Tj}, since $|P(\overline{\eps},\overline{\ell})| \leq C^r$,
\begin{equation}
|T_j(m,r,\overline{\ell})| \leq C^r M^r R^s m^{2/3} e^{-c_1 (\log m)^2}.
\end{equation}
We can use this estimate in~\eqref{Ujm} to see that 
\begin{equation}
\begin{split}
| U_1(m) | &\leq \sum_{s=1}^{\infty} \frac{R^s}{M^s} \sum_{r=1}^s \frac{1}{r} \sum_{\substack{\ell_1+\dots + \ell_r=s \\ \ell_1,\dots,\ell_r \geq 1}} \frac{(CM)^r m^{2/3} e^{-c_1 (\log m)^2}}{\ell_1! \dots \ell_r!}  \\
& \leq C m^{2/3} e^{-c_1 (\log m)^2} \sum_{s=1}^{\infty} (CR)^s \leq C m^{2/3}e^{-c_1 (\log m)^2}
\end{split}
\end{equation}
provided that $R$ is small enough. Here, we used the fact that
\begin{equation}
\sum_{\substack{\ell_1+\dots + \ell_r=s \\ \ell_1,\dots,\ell_r \geq 1}} \frac{1}{\ell_1! \dots \ell_r!} \leq \left( \sum_{\ell=0}^{\infty} \frac{1}{\ell!} \right)^r =e^r.
\end{equation}

We next consider $j=2$ so that $(\overline{\delta},\overline{k},\overline{p},\overline{q}) \in D_{r,2}$.  We cannot have $k_i >k_{i+1}$ for all $i$ since it violates the cyclic condition.  Hence, when estimating the $\mathcal{K}^{(i)}$ in the integrand in~\eqref{Sr}, we have to use statement (4) in Proposition~\ref{Asymptotics} at least once.  We now proceed in exactly the same way as above to prove that $U_2(m) \to 0$ as $m \to \infty$. 
\end{proof}

It remains to consider $U_3(m)$.  This means that we need to control $S_r(\overline{\eps},\overline{\delta},\overline{k},\overline{p},\overline{q})$ in the case when $(\overline{\delta},\overline{k},\overline{p},\overline{q}) \in D_{r,3}$.  There are two sub-cases: for a given $(\overline{\delta},\overline{k},\overline{p},\overline{q}) \in D_{r,3}$, 
\begin{enumerate}
\item if $\delta_i=1$ for some $i$, we define $i_1$, by $\delta_1=\dots=\delta_{i_1-1}=0$, $\delta_{i_1}=1$,  
\item if $\delta_i=0$ for all $i$, we define $i_1$ by $q_1= \dots = q_{i_1} \not = q_{i_1+1}$.  
\end{enumerate}
Such $i_1$'s always exist by the definition of $D_{r,3}$. Define $d_i$, $1 \leq i \leq r$, by 
\begin{equation}
d_i = \left\{
\begin{array}{ll}
\lambda_1 (2m)^{1/3} & \mbox{if $q_{i} \not = q_{i+1}$ or $\delta_i=1$} \\
(\log m)\lambda_1 \sqrt{|k_{i+1}-k_i|} &\mbox{if $\delta_i=0, q_{i} = q_{i+1}, k_i \not = k_{i+1}$ } \\
1 & \mbox{if $\delta_i=0,q_i=q_{i+1}, k_i=k_{i+1}$.}
\end{array}
\right.
\end{equation}
We now  introduce new coordinates in~\eqref{Sr} by
\begin{equation} \label{ttauchange}
 \left\{
\begin{array}{ll}
\tau_{i_1} =t_{i_1}/d_{i_1} \\ 
\tau_{i} =( t_{i+1}-t_i)/d_{i}  & \mbox{if $i>i_{1}$}, 
\end{array}
\right.
\end{equation}
recalling that the indices are cyclic.  The inverse transformation is 
\begin{equation}
t_i=t_i(\overline{\tau}) = \sum_{j=i_1}^i d_j \tau_j
\end{equation}
for $i_1 \leq i < i_1+r$. After this change of variables, we obtain
\begin{equation}
S_r (\overline{\eps}, \overline{\delta} ,\overline{k} , \overline{q}) = \int_{\mathbb{R}^r} d^r \overline{\tau} \prod_{i=1}^r \mathbbm{I}_{A_{{p_i},m}} ([t_i(\overline{\tau})]) d_i  \mathcal{K}^{(i)}_{m,\overline{\eps},\overline{\delta},\overline{k},\overline{q} } (t_i (\overline{\tau}), t_{i+1} (\overline{\tau})) 
\end{equation}
The next lemma gives a bound on $S_r$. 

\begin{lemma} \label{Srbound}
There is a constant $C>0$ such that  
\begin{equation}
|S_r (\overline{\eps}, \overline{\delta} ,\overline{k} ,\overline{p}, \overline{q}) | \leq C^r
\end{equation}
for all $(\overline{\eps}, \overline{\delta} ,\overline{k} , \overline{q}) \in D_{r,3}$ and $\overline{\eps} \in \{0,1\}^r$.  

\end{lemma}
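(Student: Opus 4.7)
The plan is to bound each of the $r$ factors $d_i\,|\mathcal{K}^{(i)}_{m,\overline{\eps},\overline{\delta},\overline{k},\overline{q}}(t_i(\overline{\tau}),t_{i+1}(\overline{\tau}))|$ appearing in the integrand for $S_r$ (in the $\overline{\tau}$-coordinates given by~\eqref{ttauchange}) using the appropriate case of Proposition~\ref{Asymptotics}, so that each one-dimensional integral in $\tau_i$ yields a uniformly bounded quantity; Fubini then gives $|S_r|\leq C^r$.

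I would first single out the distinguished index $i_1$. In both subcases of $D_{r,3}$ one has $d_{i_1}=\lambda_1(2m)^{1/3}$, and since $t_{i_1}=d_{i_1}\tau_{i_1}$ the indicator $\mathbbm{I}_{A_{p_{i_1},m}}([t_{i_1}(\overline{\tau})])$ restricts $\tau_{i_1}$ to a bounded interval (essentially $A_{p_{i_1}}$). Either item~(1) or item~(2) of Proposition~\ref{Asymptotics} applies at $i_1$ and gives $d_{i_1}|\mathcal{K}^{(i_1)}|\leq C$ uniformly, because the resulting Airy or Gaussian kernel is bounded when its rescaled arguments stay in the bounded set $A_{p}$, which is guaranteed by the remaining indicators; the $\tau_{i_1}$-integral thus contributes $O(1)$.

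For each remaining $i\neq i_1$ I would split into four subcases. (i) If $\delta_i=1$ or $q_i\neq q_{i+1}$, then $d_i=\lambda_1(2m)^{1/3}$; items~(1)--(2) give $d_i|\mathcal{K}^{(i)}|\leq C$ uniformly, and the indicators on $t_i,t_{i+1}$ restrict $\tau_i=(t_{i+1}-t_i)/d_i$ to a bounded interval. (ii) If $\delta_i=0$, $q_i=q_{i+1}$ and $k_i>k_{i+1}$, items~(3)(a)--(c) rewritten in the variable $\tau_i$ produce Gaussian bounds of the form $Ce^{-c\tau_i^2}$ on the near-origin and medium ranges and an $e^{-c_1(k_i-k_{i+1})(\log m)^2}$ bound beyond; integrating over all three ranges yields $O(1)$. (iii) If $k_i<k_{i+1}$, item~(4) supplies the uniform estimate $Ce^{-c_1(k_{i+1}-k_i)(\log m)^2}$, which multiplied by the Jacobian $d_i$ and by the $\tau_i$-range length $O(m^{1/3}/d_i)$ forced by the indicators still yields $o(1)$. (iv) If $k_i=k_{i+1}$, item~(5) gives $d_i|\mathcal{K}^{(i)}|\leq Ce^{-c_1|\tau_i|}$, integrable on $\mathbb{R}$. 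In every subcase the $\tau_i$-contribution is $O(1)$ uniformly in $m$ and in the indices, so taking the product over $i$ gives the claim.

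The main obstacle is subcase~(i), the Airy/Gaussian cross-line factor: here the kernel is only bounded (not $L^1$) on $\mathbb{R}$, so one genuinely needs the indicator constraints at neighboring positions to truncate the $\tau_i$-integration to a bounded interval. A secondary, milder subtlety appears in subcase~(iii), where both $d_i$ and the admissible $\tau_i$-range grow with $m$, but the factor $e^{-c_1(k_{i+1}-k_i)(\log m)^2}$ of Proposition~\ref{Asymptotics}(4) comfortably beats these polynomial growths. Once both points are checked, the product of the $r$ uniform bounds produces $|S_r|\leq C^r$.
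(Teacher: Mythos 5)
Your argument is correct and follows essentially the same route as the paper's proof: after the change of variables~\eqref{ttauchange}, each factor $d_i\,\mathcal{K}^{(i)}$ is bounded case by case via Proposition~\ref{Asymptotics}, with the indicator constraints $|t_i(\overline{\tau})|\leq Cm^{1/3}$ used to truncate the non-decaying (Airy/Gaussian cross-line and constant) factors, and the product of the resulting $O(1)$ one-dimensional $\tau_i$-integrals gives $C^r$. Your explicit treatment of the distinguished index $i_1$ and of the $k_i<k_{i+1}$ range bookkeeping is just a more detailed spelling-out of what the paper states briefly.
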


\begin{proof}
If $\delta_i =1$, then statement (1) in Proposition~\ref{Asymptotics} gives
\begin{equation}
|  d_i  \mathcal{K}^{(i)}_{m,\overline{\eps},\overline{\delta},\overline{k},\overline{q} } (t_i (\overline{\tau}), t_{i+1} (\overline{\tau})) | \leq C \left|\tilde{\mathcal{A} }\left(\beta_{q_i} , \frac{t_i (\overline{\tau})}{\lambda_1 (2m)^{1/3}} ; \beta_{q_{i+1}} , \frac{t_{i+1} (\overline{\tau}) }{\lambda_1 (2m)^{1/3}} \right)\right|.
\end{equation}
Similarly, if $\delta_i =0$, $q_i\not = q_{i+1}$, then 
\begin{equation}
|  d_i  \mathcal{K}^{(i)}_{m,\overline{\eps},\overline{\delta},\overline{k},\overline{q} } (t_i (\overline{\tau}), t_{i+1} (\overline{\tau}))| \leq C \left| 
\phi_{\beta_{q_i}, \beta_{q_{i+1}}} \left(  \frac{t_i (\overline{\tau})}{\lambda_1 (2m)^{1/3}}   , \frac{t_{i+1} (\overline{\tau}) }{\lambda_1 (2m)^{1/3}} \right)\right|,
\end{equation}
by (2) in Proposition~\ref{Asymptotics}. Furthermore, we obtain the following estimates  for $\delta_i=0$ and $q_i= q_{i+1}$.
\begin{itemize}
\item If  $k_i >k_{i+1}$ and $|\tau_i|  \leq  c_2 ( (k_i-k_{i+1}) (\log m)^2)^{1/2}$, then  
\begin{equation}
|  d_i  \mathcal{K}^{(i)}_{m,\overline{\eps},\overline{\delta},\overline{k},\overline{q} } (t_i (\overline{\tau}), t_{i+1} (\overline{\tau}))| \leq Ce^{-c_1'\tau_i^2},
\end{equation}
where $c_1'>0$, which follows from statement (3)(a) in Proposition~\ref{Asymptotics}.
\item If $k_i <k_{i+1}$,  or $k_i >k_{i+1}$  and $|\tau_i|  >  c_2 ( |k_i-k_{i+1}| (\log m)^2)^{1/2}$, then 
\begin{equation}
|  d_i  \mathcal{K}^{(i)}_{m,\overline{\eps},\overline{\delta},\overline{k},\overline{q} } (t_i (\overline{\tau}), t_{i+1} (\overline{\tau}))| \leq Ce^{-c_1|k_i-k_{i+1}| (\log m)^2},
\end{equation}
which follows from statements (3)(a), (3)(b) and (4) in Proposition~\ref{Asymptotics}.
\item If $k_i =k_{i+1}$, then
\begin{equation}
|  d_i  \mathcal{K}^{(i)}_{m,\overline{\eps},\overline{\delta},\overline{k},\overline{q} } (t_i (\overline{\tau}), t_{i+1} (\overline{\tau}))| \leq Ce^{-c_1|\tau_i|},
\end{equation}
which follows from statement (5) in Proposition~\ref{Asymptotics}.
\end{itemize}
If we use these estimates and the fact that $|t_i| \leq Cm^{1/3}$ for all $1 \leq i \leq r $, we get the bound on $S_r$.

\end{proof}

We can now prove that we have a uniform control of the series defining $U_3(m)$.

\begin{lemma} \label{Uniform} 
The series~\eqref{Ujm} defining $U_3(m)$ is uniformly convergent for $|w_{p,q}| \leq R$ if $R$ is sufficiently small. 
\end{lemma}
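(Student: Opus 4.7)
The plan is to bound $U_3(m)$ term by term using the estimate on $S_r$ from Lemma~\ref{Srbound} together with a naive counting of the size of the index set $D_{r,3}$. The crucial observation is that, although $|D_{r,3}|$ carries a factor $M^r$ coming from the free choice of $\overline{k}\in[M]^r$, this is absorbed by the prefactor $M^{-s}$ in~\eqref{Ujm} because $r\le s$.

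First I would collect the trivial bounds on each of the factors appearing in~\eqref{Tj} for $j=3$. Inspection of~\eqref{Pepsilonell} yields $|P(\overline{\eps},\overline{\ell})|\le C_0^r$ for some absolute constant $C_0$ (the $\mathtt{g}_{\eps,\eps'}$ and $\mathcal{C}$ are fixed constants depending only on $a$). For the weights one has $\prod_{i=1}^r|w_{p_i,q_i}^{\ell_i}|\le R^{\ell_1+\cdots+\ell_r}=R^s$. The cardinality of $D_{r,3}$ is bounded by $|D_r|=2^r M^r L_2^r L_1^r$. Combined with Lemma~\ref{Srbound}, this gives
\begin{equation}
|T_3(m,r,\overline{\ell})|\le 2^r\, C_0^r\,(2ML_1L_2)^r\, R^s\, C^r \le C_1^r\, M^r\, R^s
\end{equation}
for a constant $C_1$ depending only on $a,L_1,L_2$.

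Plugging this bound into~\eqref{Ujm} for $j=3$ gives
\begin{equation}
|U_3(m)|\le \sum_{s=1}^\infty \frac{R^s}{M^s}\sum_{r=1}^s \frac{1}{r}\sum_{\substack{\ell_1+\cdots+\ell_r=s\\ \ell_i\ge 1}}\frac{C_1^r M^r}{\ell_1!\cdots \ell_r!}\le \sum_{s=1}^\infty R^s\sum_{r=1}^s \frac{1}{r}\,\frac{C_1^r}{M^{s-r}}\,e^r,
\end{equation}
where I used the elementary inequality $\sum_{\ell_1+\cdots+\ell_r=s,\ \ell_i\ge 1}1/(\ell_1!\cdots\ell_r!)\le e^r$ already employed in the proof of Lemma~\ref{Ujsmall}. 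Since $M\ge 1$ and $s-r\ge 0$, the factor $M^{r-s}$ is at most $1$, so
\begin{equation}
|U_3(m)|\le \sum_{s=1}^\infty R^s\sum_{r=1}^s (C_1 e)^r\le \sum_{s=1}^\infty R^s\,(C_1 e+1)^s\le \sum_{s=1}^\infty (R(C_1 e+1))^s,
\end{equation}
which converges, uniformly in $m$ and in $|w_{p,q}|\le R$, provided $R<1/(C_1 e+1)$.

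There is no real obstacle here: the whole point is combinatorial bookkeeping. The only thing to check carefully is that the bound $|S_r|\le C^r$ furnished by Lemma~\ref{Srbound} is uniform over all $(\overline{\eps},\overline{\delta},\overline{k},\overline{p},\overline{q})\in D_{r,3}$ (which is exactly what that lemma asserts), so that the summation over $D_{r,3}$ really only costs the counting factor $(CML_1L_2)^r$ and no further dependence on the indices. The resulting radius of convergence $R$ is independent of $m$, as claimed.
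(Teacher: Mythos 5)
Your argument is correct and is essentially the paper's own proof: bound $|P(\overline{\eps},\overline{\ell})|\le C^r$, the weights by $R^s$, $|S_r|\le C^r$ via Lemma~\ref{Srbound}, count $|D_{r,3}|\le C^rM^r$, use the factorial-sum bound, and absorb $M^r$ into $M^{-s}$ since $r\le s$. Nothing is missing, and the radius $R$ you obtain is indeed independent of $m$, exactly as in the paper.
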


\begin{proof}
It follows from~\eqref{Ujm},~\eqref{Tj} and the bound in Lemma~\ref{Srbound} that
\begin{equation}
|U_3(m)| \leq \sum_{s=1}^{\infty} \frac{1}{M^s} \sum_{r=1}^s \frac{1}{r} \sum_{\substack{\ell_1+\dots+\ell_r=s \\ \ell_1,\dots,\ell_r \geq 1}} \sum_{\overline{\eps} \in \{0,1\}^r} \frac{|P(\overline{\eps},\overline{\ell})|}{\ell_1!\dots \ell_r! } \sum_{( \overline{\delta} ,\overline{k} ,\overline{p}, \overline{q})  \in D_{r,3}} R^s C^r 
\leq \sum_{s=1}^{\infty} \frac{R^s}{M^s} \sum_{r=1}^s (CM)^r <\infty 
\end{equation}
if $|w_{p,q}| \leq R$, and $R$ is sufficiently small. 
\end{proof} 

Let 
\begin{equation}
D_{s,3}^*=   \{(    \overline{\delta}   ,\overline{k}   ,\overline{p},
\overline{q}) \in D_{s,3}; k_i \not = k_j \mbox{~for all $i \not = j$} \}
\end{equation}
and write 
\begin{equation}\label{PtoQ}
Q(\overline{\eps}) = P(\overline{\eps},(1,\dots, 1))
\end{equation}
 with the vector $(1,\dots,1)$ having length $s$. Define
\begin{equation}
U_3^* (m) =\sum_{s=1}^\infty \frac{(-1)^{s+1}}{s M^s} \sum_{\overline{\eps} \in \{0,1\}^s} Q(\overline{\eps}) \sum_{( \overline{\delta} ,\overline{k} ,\overline{p}, \overline{q})  \in D_{s,3}^*} \prod_{i=1}^s (-1)^{\delta_i} w_{p_i,q_i} S_s(\overline{\eps} , \overline{\delta} ,\overline{k} ,\overline{p}, \overline{q}). \label{Ustar3m}
\end{equation}

\begin{lemma} \label{U3mstar} 
There is a constant $C>0$ so that for $|w_{p,q}| \leq R$, with $R$ sufficiently small,
\begin{equation}
\left| U_3(m) - U_3^*(m) \right| \leq \frac{C}{M}.
\end{equation}
\end{lemma}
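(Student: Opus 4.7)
My plan is to split the difference $U_3(m)-U_3^*(m)$ into two contributions and bound each by $O(1/M)$ separately. Comparing~\eqref{Ustar3m} with~\eqref{Ujm}, $U_3^*(m)$ is precisely the sub-sum of the series defining $U_3(m)$ obtained by (i) restricting to the term $r=s$ of the inner sum (which forces $\ell_1=\cdots=\ell_s=1$, so $\ell_1!\cdots\ell_r!=1$) and (ii) further restricting the index set from $D_{s,3}$ to $D_{s,3}^*$, i.e.\ to tuples whose $k_i$'s are pairwise distinct. Accordingly I would write $U_3(m)-U_3^*(m)=E_1(m)+E_2(m)$, with $E_1(m)$ collecting the terms with $r<s$ and $E_2(m)$ collecting the $r=s$ terms coming from $D_{s,3}\setminus D_{s,3}^*$.

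For $E_1(m)$ I would reuse the estimates from the proof of Lemma~\ref{Uniform}: Lemma~\ref{Srbound} gives $|S_r|\leq C^r$, formula~\eqref{Pepsilonell} gives $|P(\overline\eps,\overline\ell)|\leq C^r$, the trivial counting yields $|D_{r,3}|\leq (CM)^r$, and $|w_{p_i,q_i}^{\ell_i}|\leq R^{\ell_i}$ gives $\prod_i|w_{p_i,q_i}^{\ell_i}|\leq R^s$. Combined with $\sum_{\ell_1+\cdots+\ell_r=s}(\ell_1!\cdots\ell_r!)^{-1}\leq e^r$, one finds
\begin{equation}
|E_1(m)|\leq \sum_{s=2}^{\infty}\frac{R^s}{M^s}\sum_{r=1}^{s-1}(C'M)^r.
\end{equation}
Re-indexing by $j=s-r\geq 1$ factors the right-hand side as $\bigl(\sum_{r\geq 1}(C'R)^r\bigr)\bigl(\sum_{j\geq 1}(R/M)^j\bigr)$; both geometric series converge for $R$ small, and the second is visibly $O(1/M)$.

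For $E_2(m)$ the crucial input is the elementary combinatorial estimate
\begin{equation}
\#\bigl\{\overline k\in[M]^s:k_i=k_j\text{ for some }i\neq j\bigr\}\leq M^s-M(M-1)\cdots(M-s+1)\leq\binom{s}{2}M^{s-1}.
\end{equation}
Since in the $r=s$ case the relevant combinatorial factor is $Q(\overline\eps)$ defined in~\eqref{PtoQ}, which satisfies $|Q(\overline\eps)|\leq C^s$, and since Lemma~\ref{Srbound} gives $|S_s|\leq C^s$, while $|w_{p_i,q_i}|\leq R$ contributes $R^s$ and the sums over $\overline\eps,\overline\delta,\overline p,\overline q$ contribute a further $(C'')^s$, I get
\begin{equation}
|E_2(m)|\leq \frac{1}{M}\sum_{s=1}^{\infty}s(C'''R)^s,
\end{equation}
which is $O(1/M)$ for $R$ small enough.

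The whole argument is essentially bookkeeping and I do not expect a real obstacle; no new analytic input beyond Lemma~\ref{Srbound} is needed. The one conceptual observation is that both sources of discrepancy between $U_3$ and $U_3^*$ --- higher multiplicities $\ell_i\geq 2$ in the cumulant expansion on one hand, and coincidences among the $k_i$'s on the other --- cost at least one factor of $M^{-1}$ in the natural bound, which is precisely the rate being claimed.
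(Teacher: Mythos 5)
Your proof is correct and follows essentially the same route as the paper: the paper also splits the difference into the $r<s$ terms of the cumulant expansion (bounded via Lemma~\ref{Srbound}, $|P(\overline\eps,\overline\ell)|\leq C^r$ and $|D_{r,3}|\leq (CM)^r$, giving $\sum_s R^sM^{-s}\sum_{r<s}C^rM^r\leq C/M$) and the $r=s$ terms with a coincidence $k_i=k_j$, whose index count is $O(M^{s-1})$, yielding the extra factor $1/M$. Your explicit $\binom{s}{2}M^{s-1}$ counting and geometric re-indexing are just slightly more detailed versions of the same estimates.
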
 

\begin{proof}
The same argument as in the proof of the previous lemma shows that
\begin{equation}
\begin{split}
&\left| \sum_{s=1}^{\infty} \frac{1}{M^s} \sum_{r=1}^{s-1} \frac{(-1)^{r+1}}{r} \sum_{\substack{\ell_1+\dots+\ell_r=s \\ \ell_1,\dots,\ell_r \geq 1}} \sum_{\overline{\eps} \in \{0,1\}^r} \frac{P(\overline{\eps},\overline{\ell})}{\ell_1!\dots \ell_r! } \sum_{( \overline{\delta} ,\overline{k} ,\overline{p}, \overline{q})  \in D_{r,3}}
S_r(\overline{\eps} , \overline{\delta} ,\overline{k} ,\overline{p}, \overline{q}) \right| \\ &\leq 
\sum_{s=1}^\infty \frac{R^s}{M^s} \sum_{r=1}^{s-1} \frac{C^r M^r}{r} \leq \frac{C}{M}.
\end{split}
\end{equation}
If $r=s$, and $k_i=k_j$ for some $i,j$, then the number of elements in $D_{s,3}$ is less than $C M^{s-1}$ and we use the same estimates as used in the proof of the previous lemma.
\end{proof}

Since $M$ tends to infinity (slowly) as $m$ tends to infinity, we only have to consider $U_3^*(m)$.  

Given $( \overline{\delta} ,\overline{k} ,\overline{p}, \overline{q})  \in D^*_{s,3} $ we let $1 \leq j_1 < \dots < j_r \leq s$ be the indices $i$ where $\delta_i=1$, or $\delta_i=0$ and $q_i \not = q_{i+1}$.  Let $\ell_1=j_1-j_r+s$, $\ell_2 = j_2-j_1$, $\dots$, $\ell_r = j_r-j_{r-1}$.  We see that $\ell_i \geq 1$ and $\ell_1+\dots+\ell_r=s$.  Also, $j_r =j_1+s-\ell_1 \leq s$, which implies that $j_1 \leq \ell_1$.  Hence, given $\ell_1,\dots,\ell_r$ with $\ell_1+\dots +\ell_r=s$, $\ell_i \geq 1$  for all $1 \leq i \leq r$, and $j_1$ with $1 \leq j_1 \leq \ell_1$, we can uniquely reconstruct $j_1,\dots, j_r$.  

Write $J=\{j_1,\dots, j_r \}$ and $J'=[s] \backslash J$.  Then, using~\eqref{Sr}, we have
\begin{equation}
\begin{split}
S_s (\overline{\eps}, \overline{\delta} ,\overline{k} ,\overline{p}, \overline{q})  
&= \int_{\mathbb{R}^s} d^s \overline{\tau} \prod_{i=1}^s \mathbbm{I}_{A_{p_i,m}} ([t_i(\overline{\tau})])
\prod_{i \in J}
d_i \mathcal{K}^{(i)}_{m,\overline{\eps}, \overline{\delta}, \overline{k}, \overline{q}}(t_i(\overline{\tau})) ,t_{i+1}(\overline{\tau})) 
\\
& \times \prod_{i \not \in J}
d_i \mathcal{K}^{(i)}_{m,\overline{\eps}, \overline{\delta}, \overline{k}, \overline{q}}(t_i(\overline{\tau}) ,t_{i+1}(\overline{\tau})).
\end{split}
\end{equation}
Note that $[t_i (\overline{\tau})] \in A_{p_i,m}$ means that  
\begin{equation}
[\alpha_{p_i}^l \lambda_1 (2m)^{1/3} ] \leq t_i (\tau) \leq [\alpha_{p_i}^r \lambda_1 (2m)^{1/3} ].
\end{equation}
Dropping the integer parts gives a negligible error and this is equivalent to $t_i(\tau)/\lambda_1(2m)^{1/3} \in A_{p_i}$, where $A_{p_i}=[\alpha_{p_i}^l,\alpha_{p_i}^r]$.  By statement (3) in Proposition~\ref{Asymptotics}, for $i \in J'$ and $|\tau_i| \leq c_2 (\log m)^{1/6}$, 
\begin{equation}
d_i \mathcal{K}^{(i)}_{m,\overline{\eps}, \overline{\delta}, \overline{k}, \overline{q}}(t_i(\overline{\tau}) ,t_{i+1}(\overline{\tau})) = \frac{1}{\sqrt{4 \pi }} e^{-\frac{\tau_i^2}{4}} (1+o(1)) \mathbbm{I}_{k_i > k_{i+1}}
\end{equation}
as $m \to \infty$. Write
\begin{equation} \label{B0eqn}
B_0(\beta,\xi;\beta',\xi')= \phi_{\beta,\beta'} (\xi,\xi')
\end{equation}
and
\begin{equation}\label{B1eqn}
B_1(\beta,\xi;\beta',\xi')= \tilde{\mathcal{A}} (\beta, \xi;\beta', \xi').
\end{equation}
Then, for $i \in J$ and $|t_i (\overline{\tau})|$, $|t_{i+1} (\overline{\tau})| \leq C m^{1/3}$, 
\begin{equation}
d_i\mathcal{K}^{(i)}_{m,\overline{\eps}, \overline{\delta}, \overline{k},\overline{q}} (t_i (\overline{\tau}), t_{i+1} (\overline{\tau})) 
= B_{\delta_i} \left(\beta_{q_i} , \frac{t_i (\overline{\tau})}{\lambda_1 (2m)^{1/3}} ; \beta_{q_{i+1}} , \frac{t_{i+1} (\overline{\tau})}{\lambda_1 (2m)^{1/3}}\right) (1+o(1))  
\end{equation}
as $m \to \infty$.   Note that 
\begin{equation}
\lim_{m \to \infty} \frac{t_i(\overline{\tau})}{\lambda_1 (2m)^{1/3}} =  \lim_{m \to \infty} \sum_{j=i_1}^{i} \frac{d_j}{\lambda_1 (2m)^{1/3}}\tau_j= \sum_{\substack{j=i_1 \\ j \in J}}^i \tau_j
\end{equation}
for $i_1 \leq i <i_1 +s$.

It follows from the above asymptotic formulas and the estimates in Proposition~\ref{Asymptotics} that
\begin{equation}
\begin{split} \label{Slimit}
&\lim_{m\to \infty} S_s (\overline{\eps}, \overline{\delta}, \overline{k},\overline{p}, \overline{q})  \\
&= \int_{\mathbb{R}^r} \prod_{j \in J} d \tau_j 
\prod_{i=1}^s \mathbbm{I}_{A_{p_i}}  \left( \sum_{{j=i_1 , j \in J}}^i \tau_j
\right) \prod_{i \in J}  B_{\delta_i } \left( \beta_{q_i} , \sum_{{j=i_1 , j \in J}}^i \tau_j ; \beta_{q_{i+1}}, \sum_{{j=i_1 , j \in J}}^{i+1} \tau_j \right) \\
&\, \times
\prod_{i \in J'} \mathbbm{I}_{k_i>k_{i+1}} \frac{1}{\sqrt{4 \pi }} \int_{\mathbb{R}}d \tau_i  e^{-\frac{\tau_i^2}{4} }  \\
&= \prod_{i \in J'} \mathbbm{I}_{k_i>k_{i+1}}  \int_{\mathbb{R}^r} \prod_{j \in J} d \tau_j 
\prod_{i=1}^s \mathbbm{I}_{A_{p_i}}  \left( \sum_{{j=i_1 , j \in J}}^i \tau_j
\right) \prod_{i \in J}  B_{\delta_i } \left( \beta_{q_i} , \sum_{{j=i_1 , j \in J}}^i \tau_j ; \beta_{q_{i+1}}, \sum_{{j=i_1 , j \in J}}^{i+1} \tau_j \right).
\end{split}
\end{equation}
Note that a non-zero right side in~\eqref{Slimit} requires $p_i = p_{j_\alpha}$ for $j_{\alpha} \leq i <j_{\alpha+1}$ since otherwise 
\begin{equation}
\prod_{i=1}^s \mathbbm{I}_{A_{p_i}}  \left( \sum_{{j=i_1 , j \in J}}^i \tau_j 
\right)  =0.
\end{equation}
By the definition of $j_{\alpha}$, we have that $q_i=q_{j_{\alpha}}$ for $j_{\alpha} \leq i <j_{\alpha+1}$.  Note that the limit in~\eqref{Slimit} does not depend on $\eps$.  We have, for fixed $\delta$, $q$, which determine $J$ and $J'$, that
\begin{equation}
\lim_{m \to \infty }  \frac{1}{M^s} \sum_{\overline{k} \in [M]^s} \prod_{i \in J'} \mathbbm{I}_{k_i > k_{i+1}} = \frac{1}{\ell_1! \dots \ell_r!}.
\end{equation}
Thus, after an analogous change of variables to~\eqref{ttauchange}, we get
\begin{equation}
\begin{split} \label{Slimit2}
&\lim_{m \to \infty} \frac{1}{M^s} \sum_{(\overline{\delta}, \overline{k},\overline{p}, \overline{q})\in D_{s,3}^*} \prod_{i=1}^s (-1)^{\delta_i} w_{p_i,q_i} S_s( \overline{\eps}, \overline{\delta}, \overline{k},\overline{p}, \overline{q})  \\
&=\sum_{r=1}^s \sum_{\substack{\ell_1+\dots+\ell_r=s \\ \ell_1,\dots,\ell_r\geq 1}} \frac{\ell_1(-1)^r}{\ell_1!\dots \ell_r!} \sum_{\overline{p} \in [L_2]^r} \sum_{\overline{q} \in [L_1]^r} \sum_{\overline{\delta} \in \{0,1\}^r} \prod_{i=1}^r (-1)^{1+\delta_i} w_{p_i,q_i}^{\ell_i}  \\
&\times \int_{\mathbb{R}^r} d^r \overline{t} \prod_{i=1}^r \mathbbm{I}_{A_{p_i}} (t_i) B_{\delta_i} \left( \beta_{q_i} ,t_i;\beta_{q_{i+1}},t_{i+1} \right) 
\end{split}
\end{equation}
where the $\ell_1$ factor comes from the $\ell_1$ possible choices of $j_1$ as discussed above. By symmetry, we see that we can replace 
\begin{equation}
\sum_{\substack{\ell_1+\dots+\ell_r=s \\ \ell_1,\dots,\ell_r\geq 1}} \frac{\ell_1(-1)^r}{\ell_1!\dots \ell_r!} 
\end{equation}
in the right side of~\eqref{Slimit2} by
\begin{equation}
\frac{1}{r} \sum_{\substack{\ell_1+\dots+\ell_r=s \\ \ell_1,\dots,\ell_r\geq 1}} \frac{(\ell_1+ \dots +\ell_r)(-1)^r}{\ell_1! \dots \ell_r!} =  \frac{(-1)^r}{r} \sum_{\substack{\ell_1+\dots+\ell_r=s \\ \ell_1,\dots,\ell_r\geq 1}} \frac{s}{\ell_1!\dots \ell_r!}. 
\end{equation}
Thus, we find that
\begin{equation}
\begin{split}
&\lim_{m \to \infty} \frac{1}{M^s} \sum_{(\overline{\delta}, \overline{k},\overline{p}, \overline{q})\in D_{s,3}^*} \prod_{i=1}^s (-1)^{\delta_i} w_{p_i,q_i} S_s( \overline{\eps}, \overline{\delta}, \overline{k},\overline{p}, \overline{q})  \\
&=\sum_{r=1}^s \frac{(-1)^r}{r}\sum_{\substack{\ell_1+\dots+\ell_r=s \\ \ell_1,\dots,\ell_r\geq 1}}\frac{s}{\ell_1!\dots \ell_r!}  \sum_{\overline{p} \in [L_2]^r} \sum_{\overline{q} \in [L_1]^r}  \prod_{i=1}^r  w_{p_i,q_i}^{\ell_i}  \\ 
&\times \int_{\mathbb{R}^r} d^r \overline{t} \prod_{i=1}^r \mathbbm{I}_{A_{p_i}} (t_i) \mathcal{A} \left( \beta_{q_i} ,t_i;\beta_{q_{i+1}},t_{i+1} \right),
\end{split}
\end{equation}
since $\mathcal{A}= -B_0+B_1$ from~\eqref{eq:extendedAiry},~\eqref{B0eqn} and~\eqref{B1eqn}.  In order to get the limit of $U_3^*(m)$ in~\eqref{Ustar3m}, we need the following lemma, which we will prove in Section~\ref{sec:U0proof}.

\begin{lemma} \label{Tepsilon}
We have that
\begin{equation}
\sum_{\overline{\eps} \in \{0,1\}^s} Q( \overline{\eps}) =(-1)^s.
\end{equation}
\end{lemma}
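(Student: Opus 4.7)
The cyclic product structure of $Q(\overline{\eps})$ begs to be read as the trace of a power of a small matrix. With $\ell_i = 1$ we have $(-1)^{(1+\ell_i)\eps_i} = 1$, so
\begin{equation}
Q(\overline{\eps}) = \prod_{i=1}^s M_{\eps_i,\eps_{i+1}}, \qquad M_{\eps,\eps'} := \frac{a\mathrm{i}}{2}\sqrt{1-2c}\, \mathtt{g}_{\eps,\eps'}\, \mathcal{C}^{\eps+\eps'-2},
\end{equation}
with indices read cyclically ($\eps_{s+1} = \eps_1$). Therefore the plan is to identify
\begin{equation}
\sum_{\overline{\eps}\in\{0,1\}^s} Q(\overline{\eps}) = \mathrm{tr}(M^s)
\end{equation}
and then to evaluate the $2\times 2$ matrix $M$ explicitly.

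The main claim is that $M$ has characteristic polynomial $\lambda(\lambda+1)$, i.e.\ eigenvalues $0$ and $-1$. Once this is established, Cayley--Hamilton gives $M^2 = -M$, hence $M^s = (-1)^{s-1}M$ for every $s\geq 1$, and so $\mathrm{tr}(M^s)=(-1)^{s-1}\mathrm{tr}(M)=(-1)^s$. Thus everything reduces to verifying the two identities $\mathrm{tr}(M) = -1$ and $\det(M) = 0$ from the explicit definitions of $\mathtt{g}_{\eps,\eps'}$ and $\mathcal{C}$.

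Both identities are straightforward algebra once one rewrites the basic quantities in a single clean variable. Setting $r = \sqrt{1+a^2}$, the relation $c=a/(1+a^2)$ gives $\sqrt{1-2c} = (1-a)/r$ (for $a\in(0,1)$), while $\mathcal{C} = (r+a-1)/\sqrt{2a}$. A short computation shows
\begin{equation}
\frac{1}{\mathcal{C}^2} = \frac{a^2-a+1+r(1-a)}{a},
\end{equation}
after which one checks the algebraic identity $(r+a)(a^2-a+1+r(1-a))=a(r+1)$. Substituting into $M_{0,0}$ and $M_{1,1}$ yields the remarkably simple expressions $M_{0,0} = -(r+1)/(2r)$ and $M_{1,1} = -(r-1)/(2r)$, whose sum is $-1$. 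For the determinant, expanding $M_{0,0}M_{1,1}$ and $M_{0,1}M_{1,0}$ one finds that both equal
\begin{equation}
\alpha^2\cdot \frac{-\bigl[(a^2-a+1)-(1-a)r\bigr]}{a(1-a)^2\mathcal{C}^2},
\end{equation}
with $\alpha = \frac{a\mathrm{i}}{2}\sqrt{1-2c}$, so that $\det(M)=0$.

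The only real obstacle is the bookkeeping in the last step: one must keep track of the factors $\mathrm{i}$, the powers of $\mathcal{C}$, and the off-diagonal minus sign in $\mathtt{g}_{1,0}$ carefully enough to see that $M_{0,1}M_{1,0}$ and $M_{0,0}M_{1,1}$ coincide. All of this is routine, driven by the identity $(a^2-a+1)^2 - (1-a)^2(1+a^2) = a^2$ which underlies the simplification of $1/\mathcal{C}^2$, and no further structural input is needed.
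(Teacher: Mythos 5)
Your proposal is correct and follows essentially the same route as the paper: both identify $\sum_{\overline{\eps}}Q(\overline{\eps})$ as $\mathrm{tr}(M^s)$ for the same explicit $2\times 2$ matrix and conclude from its eigenvalues $0$ and $-1$ (the paper simply writes the matrix out and reads off the eigenvalues, while you verify $\mathrm{tr}(M)=-1$ and $\det(M)=0$). One small transcription slip: the intermediate identity should read $(r+a)\bigl(a^2-a+1+r(1-a)\bigr)=r+1$, not $a(r+1)$ (the prefactor $a$ cancels against the $a$ in the denominator of $1/\mathcal{C}^2$), and with that correction your stated entries $M_{0,0}=-(r+1)/(2r)$ and $M_{1,1}=-(r-1)/(2r)$ are exactly right.
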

Thus, using the estimate in Lemma~\ref{Srbound}, we see that, provided $|w_{p,q}| \leq R$ with $R$ sufficiently small, we can take the limit in~\eqref{Ustar3m} and get
\begin{equation}
\begin{split}
&\lim_{m \to \infty} U^*_3(m) =\sum_{s=1}^{\infty} \sum_{r=1}^s \frac{(-1)^{r+1}}{r} 
\sum_{\substack{\ell_1+\dots+\ell_r=s \\ \ell_1,\dots,\ell_r\geq 1}}\frac{1}{\ell_1!\dots \ell_r!}  \sum_{\overline{p} \in [L_2]^r} \sum_{\overline{q} \in [L_1]^r}  \prod_{i=1}^r  w_{p_i,q_i}^{\ell_i}  \\
& \times  \int_{\mathbb{R}^r} d^r \overline{t} \prod_{i=1}^r \mathbbm{I}_{A_{p_i}} (t_i) \mathcal{A} \left( \beta_{q_i} ,t_i;\beta_{q_{i+1}},t_{i+1} \right)  \\
&= \log \det \left( \mathbbm{I} +(e^{\Psi}-1 ) \right)_{L^2(\{\beta_1,\dots,\beta_{L_1}\} \times \mathbb{R})},
\end{split}
\end{equation}
where
$\Psi(x)=\sum_{p=1}^{L_2} \sum_{q=1}^{L_1} w_{p,q} \mathbbm{I}_{\{\beta_q\} \times A_p} (x)$ as defined in~\eqref{phi}
for  $x \in \{\beta_1,\dots,\beta_q \} \times \mathbb{R}$.  This completes the proof of the theorem.

\section{Proofs of Lemmas~\ref{U0} and~\ref{Tepsilon}} \label{sec:U0proof}

In this section, we will give the proof of Lemma~\ref{U0} followed by the proof of Lemma~\ref{Tepsilon}.  These were both stated without proof in Section~\ref{sec:mainthmproof}.

Before giving the proof of Lemma~\ref{U0}, we recall notation and give
some  preliminaries. As  in Section~\ref{sec:mainthmproof},  we assume
that the notation is cyclic, that  is $z_{r+1}=z_1$ in all products of
size   $r$.  Note   that  since   $\mathtt{K}_{m,0}$  is   related  to
$\tilde{\mathcal{K}}_{m,0}= a  \mathrm{i} \mathbb{K}^{-1}_{1,1}$  by a
conjugation, see~\eqref{Ktilde},~\eqref{Ktilde2}  and~\eqref{KmKm}, we
have
\begin{equation}
\prod_{i=1}^r \mathtt{K}_{m,0} (z_i,z_{i+1}) = \prod_{i=1}^r a \mathrm{i} \mathbb{K}^{-1}_{1,1} (x(z_{i+1}),y(z_i)) \label{Kidentity}.
\end{equation}
Let $t=t(z)$, $t'=t(z')$, $\eps=\eps(z)$ and $\eps'=\eps(z')$, where $z,z' \in \mathcal{L}_m(q,k)$.  From~\eqref{gasphaseeqn}, we see that 
\begin{equation}
\mathbb{K}_{1,1}^{-1}(x(z'),y(z)) = - \frac{\mathrm{i}^{1+h(\eps_1,\eps_2)}}{(2\pi \mathrm{i})^2} \int_{\Gamma_1} \frac{d u_1}{u_1} \int_{\Gamma_1} \frac{du_2}{u_2} \frac{a^{\eps} u_2^{1-h(\eps,\eps')} +a^{1-\eps} u_1 u_2^{h(\eps,\eps')} }{\tilde{c}(u_1,u_2) u_1^{\frac{x_1(z')-y_1(z)+1}{2}} u_2^{\frac{x_2(z')-y_2(z)+1}{2}}}.
\end{equation} 
Now, we have $x_2(z')-y_2(z)=2(t'-t)-1+2\eps$ and $x_1(z')-y_1(z)=2(t'-t)+1-2\eps'$ by~\eqref{tz} and~\eqref{xyz}. Define
\begin{equation}
\mathcal{G}_{\eps,\eps'}(t)= \frac{a \mathrm{i}^{h(\eps,\eps')}}{(2\pi \mathrm{i})^2} \int_{\Gamma_1} \frac{du_1}{u_1}  \int_{\Gamma_1} \frac{du_2}{u_2}  \frac{a^{\eps}u_1^{-1+\eps+\eps'}  u_2^{1-h(\eps,\eps')} +a^{1-\eps} u_1^{\eps+\eps'}  u_2^{h(\eps,\eps')} }{\tilde{c}(u_1,u_2) (u_1 u_2)^{t +\eps}}.
 \label{Lepsilon}
\end{equation}
It follows that 
\begin{equation}
a \mathrm{i} \mathbb{K}_{1,1}^{-1} (x(z'),y(z)) = \mathcal{G}_{\eps,\eps'}(t'-t) 
\end{equation}
and consequently 
\begin{equation}
\prod_{i=1}^r \mathtt{K}_{m,0} (z_i,z_{i+1}) = \prod_{i=1}^r  \mathcal{G}_{\eps_i,\eps_{i+1}}(t_{i+1}-t_i) \label{KLidentity}
\end{equation}
if $z_i \in \mathcal{L}_m(q,k)$, $\eps_i = \eps(z_i)$ and $t_i=t(z_i)$ for $1 \leq i \leq r$.  By making the change of variables $u_1=u$, $u_2=\omega/u$ in~\eqref{Lepsilon}, we obtain
\begin{equation}
\mathcal{G}_{\eps,\eps'}(t) = \frac{a \mathrm{i}^{h(\eps,\eps')}}{2 \pi \mathrm{i}} \int_{\Gamma_1} \frac{f_{\eps,\eps'}(\omega)}{\omega^t} \frac{d\omega }{\omega} \label{Lepsilon2}
\end{equation}
where 
\begin{equation} \label{fepseps'}
f_{\eps,\eps'}(\omega) = \frac{1}{2\pi \mathrm{i}} \int_{\Gamma_1} \frac{du}{u} \frac{a^{\eps } u^{-2(1-\eps)(1-\eps')} \omega^{1-\eps-h(\eps,\eps')} +a^{1-\eps} u^{2\eps \eps'} \omega^{h(\eps,\eps')-\eps}}{\tilde{c}(u,\omega/u)}.
\end{equation}
We have the following lemma
\begin{lemma} \label{lem:feps}
Let $f_{\eps,\eps'}(\omega)$ be defined in~\eqref{fepseps'}. Then, we have the relations,
\begin{equation}
f_{0,0}(\omega)=f_{1,1}(\omega)
\end{equation}
and 
\begin{equation}
a f_{0,0}(\omega)-a^2( f_{0,0} (\omega)^2 +f_{0,1}(\omega)f_{1,0}(\omega)) = 0.
\end{equation}
\end{lemma}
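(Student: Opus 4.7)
The plan is to prove both identities directly from the integral representation \eqref{fepseps'}. Part (i) follows from a single change of variables exploiting a symmetry of $\tilde{c}$. Part (ii) reduces, via a second substitution, to a one-dimensional residue calculation that finishes by an algebraic cancellation.

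\textbf{Part (i).} In the integrand defining $f_{0,0}(\omega)$ I will substitute $u \mapsto \omega/v$. Since $|\omega|=1$ the contour $\Gamma_1$ is mapped to itself with reversed orientation, so $du/u \mapsto -dv/v$ (the two minus signs cancel), $\omega/u^2 \mapsto v^2/\omega$, and $\tilde{c}(\omega/v,v) = \tilde{c}(v,\omega/v)$ by the symmetry $\tilde{c}(u_1,u_2) = \tilde{c}(u_2,u_1)$ visible from \eqref{ctilde}. The resulting integrand is exactly the one defining $f_{1,1}(\omega)$ (whose numerator is $a + u^2/\omega$), which proves $f_{0,0} = f_{1,1}$.

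\textbf{Part (ii), reduction.} The algebraic key is that, with $\zeta := u^2/\omega$ and
\begin{equation*}
A(z) \,:=\, 1 + a^2 + a(z + z^{-1}) \,=\, \frac{(z+a)(1+az)}{z},
\end{equation*}
a short computation gives $\tilde{c}(u,\omega/u) = A(\omega) + A(\zeta)$. As $u$ traverses $\Gamma_1$ once, $\zeta$ wraps $\Gamma_1$ twice while $du/u = d\zeta/(2\zeta)$, so both factors of two cancel. The $u$-numerators in \eqref{fepseps'} for $f_{0,1}$ and $f_{1,0}$ are independent of $u$, so this substitution yields
\begin{equation*}
f_{0,1}(\omega) = (1 + a\omega)\,g(\omega), \qquad f_{1,0}(\omega) = (1 + a\omega^{-1})\,g(\omega),
\end{equation*}
with $g(\omega) := \frac{1}{2\pi\mathrm{i}}\int_{\Gamma_1} \frac{d\zeta}{\zeta(A(\omega) + A(\zeta))}$. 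Since $(1+a\omega)(1+a\omega^{-1}) = A(\omega)$ one obtains $f_{0,1}f_{1,0} = A(\omega)\, g(\omega)^2$. The analogous reduction for $f_{0,0}$ gives
\begin{equation*}
f_{0,0}(\omega) = \frac{1}{2\pi\mathrm{i}}\int_{\Gamma_1} \frac{(1 + a\zeta)\, d\zeta}{\zeta^2 (A(\omega) + A(\zeta))}.
\end{equation*}

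\textbf{Residues and algebraic finish.} The $\zeta \leftrightarrow 1/\zeta$ symmetry of $A$ makes the two roots of $A(\omega)+A(\zeta)=0$ reciprocal; for $0<a<1$ both roots are real and exactly one, $\zeta_1 = \zeta_1(\omega)$, lies inside $|\zeta|=1$. Using $A'(\zeta) = a(\zeta^2-1)/\zeta^2$ at the simple pole and a two-term Laurent expansion at the double pole $\zeta=0$ (where $\zeta A(\zeta) \to a$), I compute
\begin{equation*}
g(\omega) = \frac{\zeta_1}{a(\zeta_1^2-1)}, \qquad f_{0,0}(\omega) = \frac{1}{a} + \frac{1+a\zeta_1}{a(\zeta_1^2-1)} = \frac{\zeta_1(\zeta_1+a)}{a(\zeta_1^2-1)}.
\end{equation*}
Finally, substituting $A(\omega) = -A(\zeta_1) = -(\zeta_1+a)(1+a\zeta_1)/\zeta_1$ into $f_{0,1}f_{1,0} = A(\omega)g(\omega)^2$, the quantity $a^2(f_{0,0}^2 + f_{0,1}f_{1,0})$ collapses to $af_{0,0}$ via the one-line identity $\zeta_1(\zeta_1+a) - (1+a\zeta_1) = \zeta_1^2 - 1$, which removes a factor of $\zeta_1^2-1$ from the denominator of $f_{0,0}^2 + f_{0,1}f_{1,0}$. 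The only technically delicate steps are tracking the orientation under $u \mapsto \omega/v$ in part (i) and the double pole at $\zeta=0$ in part (ii); once these are handled the identity reduces to pure algebra.
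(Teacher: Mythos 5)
Your argument is correct and is essentially the paper's own: after the substitution that turns $\tilde c(u,\omega/u)$ into a quadratic in one variable (the paper uses $u\mapsto\sqrt u$, giving roots with $r_1(\omega)r_2(\omega)=\omega^2$; you use $\zeta=u^2/\omega$, giving reciprocal roots), each $f_{\eps,\eps'}$ is evaluated by residues at the root inside $\Gamma_1$, plus the origin for $f_{0,0}$, and both identities reduce to algebra in the roots --- your orientation-reversing substitution proving $f_{0,0}=f_{1,1}$ is just a shortcut past the explicit formulas the paper writes down. One small wording slip: the pole of your $f_{0,0}$-integrand at $\zeta=0$ is simple, not double, since $\zeta\bigl(A(\omega)+A(\zeta)\bigr)\to a$ as $\zeta\to 0$; the residue $1/a$ you use is nevertheless correct, so nothing in the computation is affected.
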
 
\begin{proof}
From~\eqref{ctilde}, we have
\begin{equation}  
\begin{split}
\tilde{c} (\sqrt{u},\omega/\sqrt{u}) & =\frac{a}{u \omega} (u^2 + (1+2(a+1/a)\omega+\omega^2)u+\omega^2) \\
&=\frac{a}{u\omega} (u-r_1(\omega))(u-r_2(\omega)).  
\end{split}
\end{equation} 
The term in the parenthesis on the right side of the first line of the above equation is a quadratic in $u$ and the second line gives the factorization into two roots,  $r_1(\omega)$ and $r_2(\omega)$.   We have that $r_1(\omega)r_2(\omega)=\omega^2$ and so for $\omega \in \mathbb{T}$, we choose $|r_1(\omega)|<1$ and $|r_2(\omega)|>1$. 

Making the change of variables $u \mapsto \sqrt{u}$ for $f_{\eps,\eps'}(\omega)$, defined in~\eqref{fepseps'}, gives
\begin{equation}
\begin{split}
f_{\eps,\eps'} (z) &= \frac{1}{2 \pi \mathrm{i}} \int_{\Gamma_1} \frac{du}{u}
\frac{a^{\eps} \omega^{1-\eps- h(\eps,\eps')} u^{-(1-\eps)(1-\eps')}+a^{1-\eps} u^{\eps \eps'} \omega^{h(\eps,\eps')-\eps}
}{\tilde{c}(\sqrt{u},\omega/\sqrt{u})}\\
&= \frac{1}{2 \pi \mathrm{i}} \int_{\Gamma_1} du \,
\frac{a^{\eps-1} \omega^{2-h(\eps,\eps')-\eps} u^{-(1-\eps)(1-\eps')}+a^{-\eps} u^{\eps \eps'} \omega^{1+h(\eps,\eps')-\eps}}{(u-r_1(\omega))(u-r_2(\omega))}.
\end{split}
\end{equation} 
 In the above integrand for $(\eps,\eps')\not=(0,0)$, then $(1-\eps)(1-\eps')=0$ which means that there is only residue at $u=r_1(z)$. This is easily computed and gives
\begin{equation}
\frac{a^{\eps-1 }\omega^{2-h(\eps,\eps')-\eps} +a^{-\eps} r_1(\omega)^{\eps \eps'} \omega^{1+h(\eps,\eps') -\eps}}{r_1(\omega)-r_2(\omega)}.
\end{equation}
For $(\eps,\eps')=(0,0)$, there are residues at $u=r_1(\omega)$ and $u=0$ which give
\begin{equation}
\begin{split}
\frac{a^{-1} \omega^{2} r_1(\omega)^{-1}+ \omega}{r_1(\omega)-r_2(\omega)}+\frac{a^{-1} \omega^{2}}{r_1(\omega)r_2(\omega)}&= \frac{r_1(\omega)r_2(\omega)(a^{-1} \omega^{2} r_1(\omega)^{-1}+ \omega)+a^{-1} \omega^{2}(r_1(\omega)-r_2(\omega))}{r_1(\omega)r_2(\omega)(r_1(\omega)-r_2(\omega))}\\
&=\frac{a^{-1}r_1(\omega) +\omega}{r_1(\omega)-r_2(\omega)}
\end{split}
\end{equation}
where we have used $r_1(\omega)r_2(\omega)=\omega^2$. We have arrived at
\begin{equation} \label{fepseps'form}
f_{\eps,\eps'} (\omega)=\frac{a^{\eps-1} r_1(\omega)^{(1-\eps)(1- \eps')} \omega^{2-2(1-\eps)(1-\eps')-h(\eps,\eps')-\eps} +a^{-\eps} \omega^{1+h(\eps,\eps')-\eps}r_1(\omega)^{\eps \eps'} }{r_1(\omega) - r_2(\omega)} 
\end{equation} 
Using the above equation, the first equation in Lemma~\ref{lem:feps} immediately follows.  For the second in equation in Lemma~\ref{lem:feps}, using~\eqref{fepseps'form} we have
\begin{equation}
\begin{split}
&a f_{0,0}(\omega) - a^2\left( f_{0,0} (\omega)^2 +{f_{0,1}(\omega) f_{1,0}(\omega)} \right)\\
&= a \frac{a^{-1} r_1(\omega)+\omega}{r_1(\omega)-r_2(\omega)} -\frac{a^2}{(r_1(\omega)-r_2(\omega))^2} \left((a^{-1} r_1(\omega)+\omega)^2+ (a^{-1} \omega +\omega^2)(1+a^{-1} \omega)\right)\\
&=-\frac{2 a^2 \omega^2+a r_1(\omega) \omega+a r_2(\omega) \omega+a \omega^3+a \omega+r_1(\omega) r_2(\omega)+\omega^2}{(r_1(\omega)-r_2(\omega))^2}\\
&=-\frac{\omega \left(2 a^2 \omega+a \left(r_1(\omega)+r_2(\omega)+\omega^2+1\right)+2 \omega\right)}{(r_1(\omega)-r_2(\omega))^2}=0
\end{split}
\end{equation}
where we have used $r_1(\omega)r_2(\omega)=\omega^2$ and $r_1(\omega)+r_2(\omega)=-(1+2(a+1/a)\omega+\omega^2)$ as required.

\end{proof}

The next lemma expresses the exponential decay of correlation in a pure gas phase.
\begin{lemma} \label{Lepsilonbound}
There are constants $C,c_1>0$ so that
\begin{equation}
| \mathcal{G}_{\eps,\eps'}(t) | \leq C e^{-c_1|t|}
\end{equation}
for all $t \in \mathbb{Z}$ and $\eps,\eps' \in \{0,1\}$. 
\end{lemma}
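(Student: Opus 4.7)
The plan is to derive the exponential decay by contour deformation of the single-variable integral representation \eqref{Lepsilon2}. The integrand $f_{\eps,\eps'}(\omega)/\omega^{t+1}$ inherits its singularities from $r_1(\omega)$ through the explicit formula \eqref{fepseps'form}, and $r_1(\omega)$ is the small root (in modulus on $\Gamma_1$) of the quadratic $u^2+(1+2(a+1/a)\omega+\omega^2)u+\omega^2=0$ in $u$. Its branch points are the zeros of the discriminant $D(\omega)=(1+2(a+1/a)\omega+\omega^2)^2-4\omega^2$. Factoring $D(\omega)$ as $(\omega^2+2(a+1/a-1)\omega+1)(\omega^2+2(a+1/a+1)\omega+1)$ and using that $a\in(0,1)$, I find four real negative roots: two in $(-1,0)$, say $-1<b_1\leq b_2<0$, and two in $(-\infty,-1)$, say $b_3\leq b_4<-1$. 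I will take the branch cuts of $\sqrt{D(\omega)}$ as the two segments $[b_1,b_2]$ and $[b_3,b_4]$ on the negative real axis, which yields an analytic choice of $r_1(\omega)$ on $\mathbb{C}\setminus([b_1,b_2]\cup[b_3,b_4])$. Since the denominator $r_1(\omega)-r_2(\omega)=\pm\sqrt{D(\omega)}$ only vanishes at the four branch points, $f_{\eps,\eps'}$ is analytic in the open annulus $|b_1|<|\omega|<|b_4|$, and in particular in an open annulus containing the unit circle.

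Once this analyticity is in hand, the decay is immediate. For $t\geq 0$, choose any $\rho_+$ with $1<\rho_+<|b_4|$; the circle $\Gamma_{\rho_+}$ avoids both cuts (they lie on the negative real axis, the inner one inside the disk of radius $\rho_+$ and the outer one outside it), so I may deform $\Gamma_1$ to $\Gamma_{\rho_+}$ in \eqref{Lepsilon2} without crossing any singularity. Bounding $|f_{\eps,\eps'}|$ by its maximum on $\Gamma_{\rho_+}$ yields
\begin{equation}
|\mathcal{G}_{\eps,\eps'}(t)|\leq C\,\rho_+^{-t}=C\,e^{-(\log\rho_+)\,t}.
\end{equation}
For $t\leq -1$, I instead choose $|b_1|<\rho_-<1$ and deform inward to $\Gamma_{\rho_-}$, which likewise avoids both cuts; the resulting bound is
\begin{equation}
|\mathcal{G}_{\eps,\eps'}(t)|\leq C\,\rho_-^{-t}=C\,\rho_-^{|t|}=C\,e^{-(\log(1/\rho_-))\,|t|}.
\end{equation}
Taking $c_1=\min(\log\rho_+,\log(1/\rho_-))>0$ and enlarging $C$ to cover the finitely many small values of $|t|$ (including $t=0$) produces the desired uniform estimate in $t$; the claim is uniform in $(\eps,\eps')\in\{0,1\}^2$ because there are only four such pairs.

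The only non-routine step is the explicit analysis of the branch points of $\sqrt{D(\omega)}$ and the resulting annulus of analyticity. The key observation — that all four zeros of $D$ are real, negative, and strictly separated from the unit circle — gives positive room to deform on both sides of $\Gamma_1$ and thereby produces exponential decay with a rate $c_1$ depending only on $a$. Note that the proof does not use the algebraic identities from Lemma~\ref{lem:feps}; those are needed elsewhere but not here, since we only need $|f_{\eps,\eps'}|$ to be bounded on compact subsets of the annulus, which is automatic once analyticity is established.
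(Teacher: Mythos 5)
Your proof is correct and takes essentially the same route as the paper: both establish that $f_{\eps,\eps'}$ is analytic in an open annulus around the unit circle and then deform the contour in \eqref{Lepsilon2} to $\Gamma_r$ with $r>1$ for $t>0$ and $r<1$ for $t<0$, bounding the integrand on the deformed circle. The only difference is that you justify the analyticity explicitly by locating the four real branch points of $\sqrt{D(\omega)}$ and checking they avoid the unit circle, whereas the paper reads this off directly from the formula \eqref{fepseps'form} obtained in the proof of Lemma~\ref{lem:feps}.
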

\begin{proof}
We see from the proof of the previous lemma that $f_{\eps,\eps'}(\omega)$ is an analytic function in the neighborhood of the unit circle.  Let $t>0$ and take $r>1$, but close to $1$ so that $f_{\eps,\eps'}(\omega)$ is analytic in $\{\omega: 1\leq |\omega| \leq r\}$.  We see from~\eqref{Lepsilon2} and Cauchy's theorem that
\begin{equation}
|\mathcal{G}_{\eps,\eps'}(t)| =\frac{a}{2\pi} \left| \int_{\Gamma_r} \frac{f_{\eps,\eps'}(\omega)}{\omega^t} \frac{d\omega}{\omega} \right| \leq \frac{C}{r^t}.
\end{equation}
If $t<0$, we take $r<1$ instead.
\end{proof}

We are now ready to prove Lemma~\ref{U0}. 
\begin{proof}[Proof of Lemma~\ref{U0}] 

Let $(\overline{\delta} ,\overline{k},\overline{p},\overline{q}) \in D_{r,0}$ so that $\delta_i =0$, $p_i=p$, $q_i=q$, $k_i=k$, $1 \leq i\leq r$.  Thus,
\begin{equation} 
T_0 (m,r,\overline{\ell}) =\sum_{\overline{\eps} \in \{0,1\}^r} \prod_{i=1}^r (-1)^{\ell_i \eps_i} \sum_{k=1}^M \sum_{p=1}^{L_2} \sum_{q=1}^{L_1} w_{p,q}^{\ell_1+\dots+\ell_r} \sum_{\overline{z} \in(\mathcal{L}_m)^r} \prod_{i=1}^r \mathbbm{I}_{p,q,k}^{\eps_i} (z_i) a \mathrm{i} \mathbb{K}_{1,1}^{-1} (x(z_{i+1}) , y(z_i))
\end{equation}
by~\eqref{Tjmrell} and~\eqref{Kidentity}.  Recalling the definition of $A_{p,m}$ in~\eqref{Apm} and using~\eqref{KLidentity}, we have that
\begin{equation}
 \sum_{\overline{z} \in(\mathcal{L}_m)^r} \prod_{i=1}^r \mathbbm{I}_{p,q,k}^{\eps_i} (z_i) a \mathrm{i} \mathbb{K}_{1,1}^{-1} (x(z_{i+1}) , y(z_i)) = \sum_{\overline{t} \in \mathbb{Z}^r}  \prod_{i=1}^r \mathbbm{I}_{A_{p,m}} (t_i)  \mathcal{G}_{\eps_i,\eps_{i+1}} (t_{i+1}-t_i). 
\end{equation}
With the above equations and~\eqref{Ujm}, we obtain
\begin{equation} \label{Uzero}
\begin{split}
U_0(m)&=M \sum_{p=1}^{L_2} \sum_{q=1}^{L_1} \Bigg( \sum_{s=1}^{\infty} \frac{w_{p,q}^s}{M^s} \sum_{r=1}^s \frac{(-1)^{r+1}}{r}
\\
 &\times \sum_{\substack{\ell_1+\dots+\ell_r=s \\ \ell_1,\dots, \ell_r \geq 1}}  \frac{1}{\ell_1! \dots \ell_r!}
\sum_{\overline{\eps} \in \{0,1\}^r} \prod_{i=1}^r (-1)^{\ell_i \eps_i} \sum_{\overline{t} \in \mathbb{Z}^r} \prod_{i=1}^r \mathbbm{I}_{A_p,m} (t_i) \mathcal{G}_{\eps_i,\eps_{i+1}} (t_{i+1}-t_i) \Bigg)
\end{split}
\end{equation}
The result of the lemma now follows from~\eqref{Uzero} and the next claim, since we get the estimate
\begin{equation}
|U_0(m) | \leq \frac{C}{M}.
\end{equation}
\begin{claim}\label{claim1}
There is a constant $C$ and an $R>0$ so that
\begin{equation}
\begin{split} \label{Mtwoest}
\Bigg| \sum_{s=1}^{\infty} \frac{w^s}{M^s}\sum_{r=1}^s \frac{(-1)^{r+1}}{r}
  \sum_{\substack{\ell_1+\dots+\ell_r=s \\ \ell_1,\dots, \ell_r \geq 1}}  
\frac{1}{\ell_1! \dots \ell_r!}
\sum_{\overline{\eps} \in \{0,1\}^r} 
\sum_{\overline{t} \in \mathbb{Z}^r} \prod_{i=1}^r(-1)^{\ell_i \eps_i}\mathbbm{I}_{A_p,m} (t_i) \mathcal{G}_{\eps_i,\eps_{i+1}} (t_{i+1}-t_i) \Bigg| \leq \frac{C}{M^2} 
\end{split}
\end{equation}
for $|w| \leq R$ and $1 \leq p \leq L_2$.
\end{claim}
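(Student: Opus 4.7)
The sum in~\eqref{Mtwoest} is, by the standard Fredholm/trace expansion, exactly
\[
X := \log\det{}_{L^2(A_{p,m}\times\{0,1\})}\bigl(I + (e^{\phi/M}-1)G\bigr),
\]
where $G$ is the operator with kernel $G((t,\eps),(t',\eps')) = \mathcal{G}_{\eps,\eps'}(t'-t)$ and $\phi(t,\eps) = w(-1)^\eps\mathbbm{I}_{A_{p,m}}(t)$. My plan is to extract a dominant ``volume'' term proportional to $|A_{p,m}|$ from each order of the expansion, show this term vanishes by Lemma~\ref{lem:feps}, and bound the remaining ``boundary'' contribution by the exponential decay in Lemma~\ref{Lepsilonbound}.

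The key algebraic input is the following Fourier symbol identity. Let $D := \mathrm{diag}(e^{w/M}-1,\,e^{-w/M}-1)$, constant in $t$, and let $\hat G(\omega)$ denote the $2\times 2$ matrix symbol of $G$, with entries $\hat G_{\eps,\eps'}(\omega) = a\mathrm{i}^{h(\eps,\eps')}f_{\eps,\eps'}(\omega)$. Using $g_{00}=g_{11}$ from Lemma~\ref{lem:feps} together with the elementary identity $(e^{w/M}-1)(e^{-w/M}-1) = -(e^{w/M}+e^{-w/M}-2)$, a direct expansion of the $2\times 2$ determinant gives
\[
\det\bigl(I + D\hat G(\omega)\bigr) = 1 + \bigl(e^{w/M}+e^{-w/M}-2\bigr)\bigl[af_{00}(\omega) - a^2\bigl(f_{00}(\omega)^2 + f_{01}(\omega)f_{10}(\omega)\bigr)\bigr],
\]
and the bracket vanishes identically in $\omega$ by the second assertion of Lemma~\ref{lem:feps}. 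Hence $\det(I + D\hat G(\omega)) \equiv 1$, so that the Fourier integral $\tfrac{1}{2\pi}\int_{-\pi}^{\pi}\log\det(I + D\hat G(\omega))\,d\omega$ vanishes at every order of its formal expansion in $w/M$.

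I would then expand $X = \sum_{s\ge 1}c_s(w/M)^s$ and perform a bulk-boundary decomposition of each cyclic trace contributing to $c_s$. Anchoring one variable $t_1\in A_{p,m}$ and changing to differences $\tau_i = t_{i+1}-t_i$, the exponential decay from Lemma~\ref{Lepsilonbound} implies that, when $t_1$ is well inside $A_{p,m}$, the remaining indicator constraints $\mathbbm{I}_{A_{p,m}}(t_i)$ can be dropped up to an exponentially small error in the distance to $\partial A_{p,m}$. Summing the resulting translation-invariant cyclic sums via Parseval, weighted by the combinatorial prefactors $(-1)^{r+1}/r$ and $1/(\ell_1!\cdots \ell_r!)$, reconstitutes exactly $|A_{p,m}|$ times the $s$-th order coefficient of $\tfrac{1}{2\pi}\int\log\det(I+D\hat G(\omega))\,d\omega$, which vanishes by the previous step. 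The boundary remainder, where some $t_i$ lies within $O(\log m)$ of $\partial A_{p,m}$, is bounded by $K^s$ uniformly in $m$ thanks again to Lemma~\ref{Lepsilonbound} combined with the factorial denominators, so $|c_s|\le K^s$ uniformly in $m$.

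For $s=1$ only a single diagonal trace appears and no boundary correction arises: $c_1 = |A_{p,m}|\,[\mathcal{G}_{00}(0)-\mathcal{G}_{11}(0)] = 0$, once more by $f_{00}=f_{11}$. Consequently $X = \sum_{s\ge 2}c_s(w/M)^s$ is geometrically convergent for $|w|\le R$ with $R$ small enough, and bounded by $C(R/M)^2\le C'/M^2$, which is the claim. The main obstacle lies in the third paragraph: one must combine the $r$- and $\overline\ell$-sums in~\eqref{Mtwoest} with Parseval in precisely the right way so that the bulk pieces reassemble, at every order $s$ simultaneously, into the formal expansion of $\log\det(I+D\hat G(\omega))$ to which the algebraic identity of the second paragraph can be applied.
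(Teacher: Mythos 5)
Your proposal is correct and follows essentially the same route as the paper: the same Fourier-symbol reduction, the same $2\times 2$ determinant identity $\det(I+D\hat G(\omega))\equiv 1$ obtained from both relations in Lemma~\ref{lem:feps}, and the same bulk--boundary estimate via the exponential decay of Lemma~\ref{Lepsilonbound}, with the surviving series starting at $s=2$ and hence of size $C/M^2$. The reassembly step you flag as the main obstacle is exactly what the paper does, and it is immediate once one observes that, at fixed $\omega$, the $\overline{\eps}$-sum with the signs $(-1)^{\ell_i\eps_i}$ is $\mathrm{tr}(\eta^{\ell_1}F_\omega\cdots\eta^{\ell_r}F_\omega)$ for $2\times 2$ matrices, so the combinatorial sum is literally the cumulant expansion of $\log\det\bigl(\mathbbm{I}+(e^{\frac{w}{M}\eta}-1)F_\omega\bigr)$.
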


\begin{proof}[Proof of Claim~\ref{claim1}]
From~\eqref{Lepsilon2}, we see that 
\begin{equation} \label{Fourier}
\mathcal{G}_{\eps,\eps'}(t) = a \mathrm{i}^{h(\eps,\eps')} \widehat{f}_{\eps,\eps'}(t)
\end{equation}
for $t \in \mathbb{Z}$ where $\widehat{f}_{\eps,\eps'}(t)$ is the $t^{\mathrm{th}}$ Fourier coefficient of $f_{\eps,\eps'}$.  Thus, we have 
\begin{equation}
\begin{split}
\sum_{\overline{t} \in \mathbb{Z}^r} \prod_{i=1}^r\mathbbm{I}_{A_p,m} (t_i) \mathcal{G}_{\eps_i,\eps_{i+1}} (t_{i+1}-t_i)
=\prod_{i=1}^r a \mathrm{i}^{h(\eps_i,\eps_{i+1})} \sum_{\overline{t} \in \mathbb{Z}^r} \prod_{i=1}^r\mathbbm{I}_{A_p,m} (t_i) \widehat{f}_{\eps_i,\eps_{i+1}} (t_{i+1}-t_i).
\end{split}
\end{equation}
Using properties of convolutions of Fourier coefficients, we have
\begin{equation}
\begin{split}
\sum_{t_2,\dots,t_r \in \mathbb{Z}} \prod_{i=1}^r \widehat{f}_{\eps_i,\eps_{i+1}} (t_{i+1}-t_i)& = 
\sum_{t_r \in \mathbb{Z}} (\widehat{f_{\eps_1,\eps_2} \dots  f_{\eps_{r-1},\eps_r}} )(t_r-t_1) \widehat{f}_{\eps_r,\eps_1}(t_1-t_r)  \\
&=\sum_{t_r \in \mathbb{Z}} (\widehat{f_{\eps_1,\eps_2} \dots  f_{\eps_{r-1},\eps_r}} )(t_r) \widehat{f}_{\eps_r,\eps_1}(-t_r)  \\
&=( \widehat{f_{\eps_1,\eps_2} \dots  f_{\eps_{r-1},\eps_r}} )(0)= \frac{1}{2\pi \mathrm{i}} \int_{\Gamma_1} \frac{d\omega}{\omega} \prod_{i=1}^r f_{\eps_i,\eps_{i+1}}(\omega)
\end{split}
\end{equation}
for $r \geq 2$.   Thus, for $r \geq 2$ we have
\begin{equation}
\begin{split} \label{Traceest}
&\left| \sum_{\overline{t} \in\mathbb{Z}^r} \prod_{i=1}^r \mathbbm{I}_{A_{p,m}} (t_i) \widehat{f}_{\eps_i,\eps_{i +1} } (t_{i+1}-t_i ) - \frac{|A_{p,m}|}{2\pi \mathrm{i} } \int_{\Gamma_1} \frac{d\omega}{\omega}  \prod_{i=1}^r f_{\eps_i,\eps_{i+1}} (\omega) \right|  \\
&=\left| \sum_{\overline{t} \in\mathbb{Z}^r} \mathbbm{I}_{A_{p,m}} (t_1) \left( \prod_{i=2}^r \mathbbm{I}_{A_{p,m}} (t_i)-1 \right)\prod_{i=1}^r  \widehat{f}_{\eps_i,\eps_{i +1} } (t_{i+1}-t_i )\right|  \\
&\leq  \sum_{\overline{t} \in\mathbb{Z}^r} \mathbbm{I}_{A_{p,m}} (t_1) \left( \sum_{j=2}^r \mathbbm{I}_{A^c_{p,m}} (t_j) \right) \prod_{i=1}^r Ce^{-c_1 |t_{i+1}-t_i|} 
\end{split}
\end{equation}
by Lemma~\ref{Lepsilonbound} and Eq.~\eqref{Fourier}. 

Introduce new coordinates $s_1=t_1$, $s_{i} = t_{i}-t_{i-1}$, $2 \leq i \leq r$.  The inverse is 
\begin{equation}
t_j =\sum_{i=1}^j s_i
\end{equation}
so we get a bijection from $\mathbb{Z}^r$ to $\mathbb{Z}^r$. We see that the right side in~\eqref{Traceest} is less than or equal to
\begin{equation}
\begin{split}
& C^r\sum_{\overline{s} \in\mathbb{Z}^r} \mathbbm{I}_{A_{p,m}} (s_1) \left( \sum_{j=2}^r \mathbbm{I}_{A^c_{p,m}} (s_1 +\dots +s_j ) \right) \prod_{i=1}^r Ce^{-c_1 \sum_{i=2}^r|s_i| -c_1 |s_2+\dots+s_r|}  \\
&\leq C^r \sum_{j=2}^r \sum_{\sigma, {s_1} \in\mathbb{Z}} \sum_{\substack{s_2,\dots, s_r \in \mathbb{Z} \\ s_2+\dots+s_j=\sigma}} \mathbbm{I}_{A_{p,m}} (s_1)  \mathbbm{I}_{A^c_{p,m}} (s_1 +\sigma)  e^{-\frac{c_1}{2}|\sigma| -\frac{c_1}{2} \sum_{i=2}^r |s_i|}  \\
& \leq C^r \sum_{j=2}^r \sum_{\sigma, {s_1} \in\mathbb{Z}} \mathbbm{I}_{A_{p,m}} (s_1)\mathbbm{I}_{A^c_{p,m}} (s_1 +\sigma) e^{-\frac{c_1}{2} |\sigma|} \left( \sum_{s_2,\dots,s_r \in \mathbb{Z}} e^{-\frac{c_1}{2} (|s_2|+\dots+|s_r|)} \right) \\
& \leq C^r \sum_{\sigma, {s_1} \in\mathbb{Z}} \mathbbm{I}_{A_{p,m}} (s_1)\mathbbm{I}_{A^c_{p,m}} (s_1 +\sigma) e^{-\frac{c_1}{2} |\sigma|} \leq C^r.
\end{split}
\end{equation}
Thus, we find
\begin{equation} \label{Traceest2}
\left| \sum_{\overline{t} \in\mathbb{Z}^r} \prod_{i=1}^r \mathbbm{I}_{A_{p,m}} (t_i) \widehat{f}_{\eps_i,\eps_{i +1} } (t_{i+1}-t_i ) - \frac{|A_{p,m}|}{2\pi \mathrm{i} } \int_{\Gamma_1} \frac{d\omega}{\omega}  \prod_{i=1}^r f_{\eps_i,\eps_{i+1}} (\omega) \right|  \leq C^r.
\end{equation}
Write, 
\begin{equation}
\begin{split}
\Sigma_1 &= \sum_{s=1}^\infty \frac{w^s}{M^s} \frac{1}{s!} \sum_{\eps_1 \in \{0,1\}} \sum_{t_1 \in \mathbb{Z}} (-1)^{s \eps_1} \mathbbm{I}_{A_{p,m}} (t_1) a \mathrm{i}^{h(\eps_1,\eps_1)}  \widehat{f}_{\eps_1,\eps_1}(0) \\
& =
\sum_{s=1}^\infty \frac{w^s}{M^s} \frac{1}{s!} \sum_{\eps_1 \in \{0,1\}}(-1)^{s \eps_1}  \frac{|A_{p,m}|}{2 \pi \mathrm{i}} \int_{\Gamma_1} \frac{d \omega}{\omega} a \mathrm{i}^{h(\eps_1,\eps_1)} f_{\eps_1,\eps_1}(\omega),
\end{split}
\end{equation}
and
\begin{equation}
\Sigma_2 = \sum_{s=2}^\infty \frac{w^s}{M^s} \sum_{r=2}^{s} \frac{(-1)^{r+1}}{r} \sum_{\substack{\ell_1+\dots +\ell_r=s \\ \ell_1,\dots,\ell_r \geq 1}} \frac{1}{\ell_1! \dots \ell_r!} \sum_{\overline{\eps} \in \{0,1\}^r } \sum_{\overline{t} \in \mathbb{Z}^r} 
\prod_{i=1}^r (-1)^{\ell_i \eps_i} \mathbbm{I}_{A_{p,m}} (t_i) a \mathrm{i}^{h(\eps_i,\eps_{i+1})} \hat{f}_{\eps_i,\eps_{i+1}}(t_{i+1} -t_i),
\end{equation}
so that the left side of~\eqref{Mtwoest} is $|\Sigma_1+\Sigma_2|$.  Now, using~\eqref{Traceest2}, we see that
\begin{equation}
\begin{split}
&\left| \Sigma_2 - \sum_{s=2}^\infty \frac{w^s}{M^s} \sum_{r=2}^{s} \frac{(-1)^{r+1}}{r} \sum_{\substack{\ell_1+\dots +\ell_r=s \\ \ell_1,\dots,\ell_r \geq 1}} \frac{1}{\ell_1! \dots \ell_r!} \sum_{\overline{\eps} \in \{0,1\}^r } \frac{|A_{p,m}|}{2\pi \mathrm{i}} \int_{\Gamma_1}\frac{d\omega}{\omega} \prod_{i=1}^r (-1)^{\ell_i \eps_i} a \mathrm{i}^{h(\eps_i,\eps_{i+1})} f_{\eps_i,\eps_{i+1}} (\omega)  \right| \\
& \leq \sum_{s=2}^\infty \frac{|w|^s}{M^s} \sum_{r=2}^{s}  \frac{1}{r} \sum_{\substack{\ell_1+\dots +\ell_r=s \\ \ell_1,\dots,\ell_r \geq 1}} \frac{C^r}{\ell_1! \dots \ell_r!} \leq \sum_{s=2}^{\infty} \frac{R^s C^s}{M^s} \leq \frac{C}{M^2}
\end{split}
\end{equation}
if $R$ is sufficiently small.  Thus,
\begin{equation} \label{Sigmasest}
\begin{split}
&| \Sigma_1+\Sigma_2| \leq\\
&  \left| \sum_{s=1}^{\infty} \frac{w^s}{M^s} \sum_{r=1}^s \frac{(-1)^{r+1}}{r}  \sum_{\substack{\ell_1+\dots +\ell_r=s \\ \ell_1,\dots,\ell_r \geq 1}} \frac{|A_{p,m}|}{\ell_1! \dots \ell_r!}
\sum_{\overline{\eps} \in \{0,1\}^r } \frac{1}{2\pi \mathrm{i}} \int_{\Gamma_1} \frac{d\omega}{\omega} \prod_{i=1}^r  (-1)^{\ell_i \eps_i} a \mathrm{i}^{h(\eps_i,\eps_{i+1})} f_{\eps_i,\eps_{i+1}} (\omega)  \right| +\frac{C}{M^2}.
\end{split}
\end{equation}
Let $F_{\omega}=(F_{\omega}(\eps_1,\eps_2) )_{0 \leq \eps_1,\eps_2 \leq 1}$ be the two by two matrix with elements $F_{\omega}(\eps_1,\eps_2) ) = a \mathrm{i}^{h(\eps_1,\eps_2)} f_{\eps_1,\eps_2}(\omega)$ for $0\leq \eps_1,\eps_2 \leq 1$, and let $\eta(\eps_1)=(-1)^{\eps_1}$.  Then, the expression between the absolute value signs in the right side of~\eqref{Sigmasest} can be written as
\begin{equation} \label{lastcumulantexpand}
\frac{|A_{p,m}|}{2 \pi \mathrm{i}} \int_{\Gamma_1} \frac{d\omega}{\omega} \sum_{s=1}^\infty \frac{w^s}{M^s} \sum_{r=1}^s \frac{(-1)^{r+1}}{r} \sum_{\substack{\ell_1+\dots +\ell_r=s \\ \ell_1,\dots,\ell_r \geq 1}} \frac{1}{\ell_1! \dots \ell_r!} \mathrm{tr} (\eta^{\ell_1} F_\omega \dots \eta^{\ell_r} F_\omega ).
\end{equation}
Here, we view $F_\omega$ as an operator with kernel $F_\omega$ and on functions $\{0,1\} \to \mathbb{C}$, that is, the trace is for a product of two $2 \times 2$ matrices.   The expression in the integrand above is a cumulant expansion of $\log \det ( \mathbbm{I} + (e^{\frac{\omega}{M} \eta}-1)F_\omega )$.  This means that~\eqref{lastcumulantexpand} equals
\begin{equation}
\frac{|A_{p,m}|}{2\pi \mathrm{i}} \int_{\Gamma_1} \frac{d\omega}{\omega} \log \det \left( \mathbbm{I} + (e^{\frac{\omega}{M} \eta}-1)F_\omega \right)
\end{equation}
provided that $R$ is small enough.   
The above determinant can be written explicitly and is given by
\begin{equation}
\begin{split}
&\det\left(
 \left( \begin{array}{cc}
1 & 0 \\
0 & 1 \end{array} \right) 
+
\left( \begin{array}{cc}
(e^{\frac{w}{M}}-1) a f_{0,0}(\omega) & (e^{\frac{w}{M}} -1) a\mathrm{i} f_{0,1}(\omega) \\
(e^{-\frac{w}{M}}-1) a \mathrm{i}{f_{1,0}(\omega)} & (e^{-\frac{w}{M}} -1) a {f_{1,1}(\omega)} \end{array} \right) \right)\\&=
(1+(e^{\frac{w}{M}}-1)af_{0,0}(\omega) )(1+(e^{-\frac{w}{M}}-1)af_{1,1}(\omega)  + (e^{\frac{w}{M}}-1)(e^{-\frac{w}{M}}-1)a^2 f_{0,1}(\omega) f_{1,0}(\omega)\\
&=1+(e^{\frac{w}{M}}-1)af_{0,0}(\omega)+(e^{-\frac{w}{M}}-1)af_{1,1}(\omega) +a^2(2-e^{\frac{w}{M}}-e^{-\frac{w}{M}})f_{0,0}(\omega)f_{1,1}(\omega) \\&+
a^2(2-e^{\frac{w}{M}}-e^{-\frac{w}{M}})f_{0,1}(\omega) f_{1,0}(\omega)\\
&=1+(e^{\frac{w}{M}}-1)af_{0,0}(\omega)+(e^{-\frac{w}{M}}-1)af_{0,0}(\omega) +a^2(2-e^{\frac{w}{M}}-e^{-\frac{w}{M}})f_{0,0}(\omega)^2 \\&+
a^2(2-e^{\frac{w}{M}}-e^{-\frac{w}{M}})f_{0,1}(\omega) f_{1,0}(\omega)\\
	&=1+(e^{\frac{w}{M}}-1)af_{0,0}(\omega)+(e^{-\frac{w}{M}}-1)af_{0,0}(\omega) +a(2-e^{\frac{w}{M}}-e^{-\frac{w}{M}})f_{0,0}(\omega), 
\end{split}
\end{equation}
where the third equality follows from the first relation in Lemma~\ref{lem:feps} and the fourth equality follows from the second relation in Lemma~\ref{lem:feps}. We conclude that
\begin{equation}
\det \left( \mathbbm{I}+ (e^{\frac{w}{M} \eta}-1 ) F_\omega\right) =1
\end{equation}
and so we have shown that $|\Sigma_1+\Sigma_2| \leq C/M^2$.  This proves the claim.
\end{proof}
The proof of the claim concludes the proof of Lemma~\ref{U0}.

\end{proof}

We now give the proof of Lemma~\ref{Tepsilon}.

\begin{proof}[Proof of Lemma~\ref{Tepsilon}]

We have that from~\eqref{Pepsilonell} and~\eqref{PtoQ} 
\begin{equation}
Q(\overline{\eps})=P(\overline{\eps},\overline{1}) = \prod_{i=1}^s \frac{a \mathrm{i}}{2} \sqrt{1-2c} \mathtt{g}_{\eps_1,\eps_{i+1}} \mathcal{C}^{\eps_i+\eps_{i+1}-2}.
\end{equation}
From this we see that the left side of (\ref{Tepsilon}) is  the trace of the $s^{\mathrm{th}}$ power of a two by two matrix where the $(\eps_1+1,\eps_2+1)^{\mathrm{th}}$ entry is
\begin{equation}
 a \mathrm{i} \frac{\sqrt{1-2c}}{2}  \frac{\mathtt{g}_{\eps_1,\eps_2}}{\mathcal{C}^{2-\eps_1-\eps_2}}
\end{equation}
for $\eps_1,\eps_2 \in \{0,1\}$.  These entries are simplified using the expressions of $\mathtt{g}_{\eps_1,\eps_2}$ and $\mathcal{C}$ given above. Thus, the two by two matrix has  the explicit form
\begin{equation}
\left(
\begin{array}{cc}
 -\frac{1}{2} \left(1+\frac{1}{\sqrt{a^2+1}}\right) & \frac{a\mathrm{i}}{2 \sqrt{a^2+1}} \\
 -\frac{a\mathrm{i}}{2 \sqrt{a^2+1}} & -\frac{1}{2}+\frac{1}{2 \sqrt{a^2+1}} \\
\end{array}
\right)
\end{equation}
which has eigenvalues $0$ and $-1$, as required.

\end{proof}

\section{Proof of Proposition~\ref{Asymptotics}} \label{sec:proofasymptotics}

In this section, we give the proof of Proposition~\ref{Asymptotics}.  In order to give this proof, we rely on various results from~\cite{CJ:16} which are recalled below.

Let $\alpha_x,\alpha_y,\beta_x,\beta_y \in \mathbb{R}$, $k_x,k_y \in \mathbb{Z}$ and $f_x,f_y \in \mathbb{Z}^2$.  Set
\begin{equation}
\begin{split} \label{xyscaling}
x&=(\rho_m +2[\alpha_x \lambda_1 (2m)^{1/3}])e_1 -(2[\beta_x \lambda_2 (2m)^{2/3}+k_x \lambda_2 (\log m)^2])e_2 +f_x\\
y&=(\rho_m +2[\alpha_y \lambda_1 (2m)^{1/3}])e_1 -(2[\beta_y \lambda_2 (2m)^{2/3}+k_y \lambda_2 (\log m)^2])e_2 +f_y.
\end{split}
\end{equation}
From~\cite[Theorem 2.7]{CJ:16} and its proof, we have
\begin{thma}[\cite{CJ:16}]
\label{Airyasymptotics}
Assume that $x \in \mathtt{W}_{\eps_x}$ and $y \in \mathtt{B}_{\eps_y}$ are given by~\eqref{xyscaling} with $\eps_x,\eps_y \in\{0,1\}$.  Furthermore, assume that $|\alpha_x|,|\alpha_y|,|\beta_x|,|\beta_y|,|f_x|,|f_y|\leq C$ for some constant $C>0$ and that $|k_x|,|k_y|\leq M$.  Then, as $m \to \infty$
\begin{equation}
\begin{split}\label{As1}
\mathbb{K}_{\mathrm{A}}(x,y)&= \mathrm{i}^{y_1-x_1+1} \mathcal{C}^{\frac{-2-x_1+x_2+y_1-y_2}{2}} c_0 \mathtt{g}_{\eps_x,\eps_y} e^{\alpha_y \beta_y -  \alpha_x \beta_x -\frac{2}{3} (\beta_x^3-\beta_y^3)}
\\ &\times 
(2m)^{-\frac{1}{3}} (\tilde{\mathcal{A} }(\beta_x , \alpha_x+\beta_x^2; \beta_y,\alpha_y+\beta_y^2)+o(1)).
\end{split}
\end{equation}
Also, as $m\to \infty$,
\begin{equation}
\begin{split}\label{As2}
\mathbb{K}_{1,1}^{-1}(x,y)&= \mathrm{i}^{y_1-x_1+1} \mathcal{C}^{\frac{-2-x_1+x_2+y_1-y_2}{2}} c_0 \mathtt{g}_{\eps_x,\eps_y} e^{\alpha_y \beta_y -  \alpha_x \beta_x -\frac{2}{3} (\beta_x^3-\beta_y^3)}\\ &\times (2m)^{-\frac{1}{3}}  ( \phi_{\beta_x,\beta_y} ( \alpha_x+\beta_x^2 ;\alpha_y+\beta_y^2) +o(1)).
\end{split}
\end{equation}

\end{thma}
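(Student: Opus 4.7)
The plan is to derive both (\ref{As1}) and (\ref{As2}) by matching the scaling (\ref{xyscaling}) to the one in~\cite[Theorem~2.7]{CJ:16} and then running the saddle-point analysis established there; what remains is a careful translation of variables plus verification that the estimates are uniform in the additional parameters $k_x,k_y,f_x,f_y$ appearing in our setup. The starting point is the explicit double-contour representation~\eqref{gasphaseeqn} for $\mathbb{K}_{1,1}^{-1}$, together with the analogous formula for $\mathbb{K}_{\mathrm{A}}$ from~\cite[Theorem~2.3]{CJ:16}; once we substitute~\eqref{xyscaling}, the exponents of $u_1,u_2$ in both integrands become $m^{2/3}$-large linear combinations of $\alpha_{x,y},\beta_{x,y}$, with the $k(\log m)^2$ and $f$ shifts entering as perturbations of order $o(m^{2/3})$.

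The next step is to locate the critical points. The parameter $\xi=-\tfrac{1}{2}\sqrt{1-2c}$ was tuned in~\cite{CJ:16} precisely so that, after passing to the change of variable $\omega=u_1 u_2$ (as in Lemma~\ref{lem:feps}) and reading off the action from $\tilde{c}(u_1,u_2)$, the two saddles in $u_1$ coalesce at $\omega_c=-1$, equivalently at $u_1=u_2=\mathrm{i}$. This coalescence of two saddles is the universal mechanism producing Airy behavior. Locally near the saddle one introduces the cubic change of variables $u_j=\mathrm{i}\bigl(1+c_0\zeta_j(2m)^{-1/3}\bigr)$; the scaling parameters $c_0,\lambda_1,\lambda_2$ defined in~\eqref{eq:scalingparameters} are engineered so that the resulting local exponent becomes $\tfrac{1}{3}\zeta^3-(\alpha+\beta^2)\zeta$, exactly matching the integrand in~\eqref{eq:Airymod}, with $\beta$ the "time" variable and $\alpha+\beta^2$ the "space" variable.

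Steepest descent on $\mathbb{K}_{\mathrm{A}}$ then yields, to leading order,
\[
c_0\,(2m)^{-1/3}\,\mathtt{g}_{\eps_x,\eps_y}\,\mathcal{C}^{(-2-x_1+x_2+y_1-y_2)/2}\,\mathrm{i}^{y_1-x_1+1}\,e^{\alpha_y\beta_y-\alpha_x\beta_x-\frac{2}{3}(\beta_x^3-\beta_y^3)}\,\tilde{\mathcal{A}}(\beta_x,\alpha_x+\beta_x^2;\beta_y,\alpha_y+\beta_y^2),
\]
which is~\eqref{As1}. The constants $\mathtt{g}_{\eps_x,\eps_y}$ arise from evaluating the parity-dependent numerator of the integrand at the coalesced saddle; the powers of $\mathrm{i}$ and $\mathcal{C}$ come from the non-exponential prefactor combined with the weights $u_1^{-(x_1-y_1+1)/2}u_2^{-(x_2-y_2+1)/2}$ at $u_1=u_2=\mathrm{i}$, using $\mathcal{C}=|G(\mathrm{i})|$. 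For~\eqref{As2} the global structure of the integral is different: the $u_1$-integral in~\eqref{gasphaseeqn} can be evaluated by residues as in the proof of Lemma~\ref{lem:feps}, leaving a single $\omega$-integral. Running steepest descent on this residual integral through $\omega_c=-1$ produces a Gaussian rather than an Airy limit, with variance proportional to $|\beta_x-\beta_y|$ and indicator $\mathbbm{I}_{\beta_x<\beta_y}$ (coming from which side the contour can be deformed through the saddle), reproducing exactly the kernel $\phi_{\beta_x,\beta_y}(\alpha_x+\beta_x^2,\alpha_y+\beta_y^2)$ of~\eqref{eq:Airyphi} together with the same overall prefactor as above.

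The main obstacle is the prefactor bookkeeping: tracking the powers of $\mathrm{i}$ and $\mathcal{C}$ through the cubic change of variables is delicate because both depend on the parities $\eps_x,\eps_y$ and on the integer parts in~\eqref{xyscaling} via the half-integer exponent $\tfrac{1}{2}(-2-x_1+x_2+y_1-y_2)$; this is the source of the factor $\mathtt{g}_{\eps_x,\eps_y}$ and of the somewhat awkward exponent of $\mathcal{C}$ in~\eqref{As1}–\eqref{As2}, and is exactly the combinatorial content of the proof in~\cite{CJ:16}. A secondary point is uniformity in $|k_x|,|k_y|\leq M=M(m)\to\infty$: the $k$-contribution shifts $\beta$ by $O((\log m)^2 m^{-2/3})=o(1)$ and enters the action linearly, so uniformity follows from the continuity of the standard steepest-descent error bounds in a neighborhood of the saddle, which is already implicit in the argument of~\cite[Theorem~2.7]{CJ:16} and merely needs to be recorded explicitly in our setup. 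The $f_x,f_y$ shifts by bounded integers likewise contribute only $O(1)$ factors that can be absorbed into the prefactor verification.
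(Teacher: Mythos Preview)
Your approach is essentially the same as the paper's: both treat the statement as a mild variant of \cite[Theorem~2.7]{CJ:16} and reduce to the saddle-point analysis carried out there. The paper's own justification is in fact much shorter than yours --- it is just the Remark following the theorem, which says: quote \cite[Theorem~2.7]{CJ:16} verbatim, then observe that the only differences are (i) the positional shift of $x,y$ by at most $|k_{x,y}\lambda_2(\log m)^2|$, and (ii) a reversal of time orientation $\beta_x\mapsto -\beta_x$, $\beta_y\mapsto -\beta_y$. For (i), the paper notes that this shift enters the proof in \cite{CJ:16} only through the Taylor expansion of the ratio $H_{x_1+1,x_2}(\omega_1)/H_{y_1,y_2+1}(\omega_2)$ in the local variables of \cite[Eq.~(3.22)]{CJ:16}, and hence affects only the exponent of $\mathcal{C}$ and the $o(1)$ error term. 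Your outline recovers this same content but re-narrates the full steepest-descent machinery that is already in \cite{CJ:16}; that is fine but unnecessary.

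One small omission: you do not mention the time reversal $\beta\mapsto -\beta$, which the paper flags explicitly as a difference from the cited statement. This is harmless at the level of the argument (it is a relabeling), but if you are presenting this as a reduction to \cite[Theorem~2.7]{CJ:16} you should record it, since the signs in the exponential prefactor $e^{\alpha_y\beta_y-\alpha_x\beta_x-\frac{2}{3}(\beta_x^3-\beta_y^3)}$ and in the arguments of $\tilde{\mathcal{A}}$ and $\phi$ depend on the orientation convention.
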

\begin{remark}
The difference between the above version of the theorem and the statement given in~\cite[Theorem 2.7]{CJ:16}, is that there is a positional change of the vertices $x$ and $y$ by at most $|k_x\lambda_2(\log m)^2|$ and $|k_y\lambda_2(\log m)^2|$ and the reverse time orientation, which simply consists of the change $\beta_x\mapsto -\beta_x$ and $\beta_y\mapsto -\beta_y$.  By comparing the statement of~\cite[Theorem 2.7]{CJ:16} and Theorem~\ref{Airyasymptotics}, the positional change affects the exponent of $\mathcal{C}$ and the error term, where we remind the reader that $|G(\mathrm{i})|$ in~\cite{CJ:16} is equal to $\mathcal{C}$ in this paper.

More explicitly, this positional change  only alters the Taylor series
computation                of                 the                ratio
$H_{x_1+1,x_2}(\omega_1)/  H_{y_1,y_2+1}(\omega_2)$  using  the  local
change      of      variables~\cite[Eq.     (3.22)]{CJ:16},      where
$H_{x_1,x_2}(\omega)$ is defined in~\cite{CJ:16} and $x=(x_1,x_2)$ and
$y=(y_1,y_2)$ are  as defined in~\eqref{xyscaling}. Catering  for this
alteration immediately gives Theorem~\ref{Airyasymptotics}.
\end{remark} 

As given in~\cite[(4.20)]{CJ:16}, define 
\begin{equation}\label{eq:Ekl}
E_{k,l} = \frac{1}{(2\pi \mathrm{i})^2} \int_{\Gamma_1} \frac{du_1}{u_1}\int_{\Gamma_1} \frac{du_2}{u_2} \frac{1}{\tilde{c}(u_1,u_2) u_1^{k} u_2^{l}}.
\end{equation}
Then, see~\cite[Eq. (4.22)]{CJ:16}, for $x \in \mathtt{W}_{\eps_x}$, $y \in \mathtt{B}_{\eps_y}$,
\begin{equation}\label{KinverseE}
\mathbb{K}^{-1}_{1,1}(x,y)= -\mathrm{i}^{1+h(\eps_x,\eps_y)} ( a^{\eps_y } E_{\mathtt{k}_1,\mathtt{l}_1}+ a^{1-\eps_y } E_{\mathtt{k}_2,\mathtt{l}_2} )
\end{equation}
where
\begin{equation}\label{kl}
\mathtt{k}_1=\frac{x_2-y_2-1}{2}+h(\eps_x,\eps_y), \hspace{1mm}
\mathtt{k}_2=\frac{x_2-y_2+1}{2}-h(\eps_x,\eps_y),\hspace{1mm}
\mathtt{l}_1=\frac{y_1-x_1-1}{2},\hspace{1mm}
\mathtt{l}_2=\frac{y_1-x_1+1}{2}.
\end{equation}
From~\cite[Lemma 4.6 and Lemma 4.7]{CJ:16},  we get the following asymptotic formulas and estimates
\begin{lemma}[\cite{CJ:16}]\label{Ekasymptotics}
Let $A_m,B_m, m\geq 1$ be given and set $b_m=\max(|A_m|,|B_m|)$, and let $a_m=A_m$ if $b_m=|B_m|$, and let $a_m=B_m$ if $b_m=|A_m|$.
\begin{enumerate}
\item Assume that $b_m \to \infty$ as $m \to \infty$ and $|a_m| \leq b_m^{{7}/{12}}$ for large $m$.  Then, there exists a constant $d_1 >0$ so that
\begin{equation}\label{Easymptotics}
E_{B_m+A_m,B_m-A_m}=\frac{(-1)^{a_m+b_m}\mathcal{C}^{2b_m}\left(e^{-\frac{\sqrt{1-2c}}{2c}\frac{a_m^2}{b_m}}\left(1+O\left(b_m^{-{1}/{4}}\right)\right) +O\left(e^{-d_1 b_m^{{1}/{6}}}\right)\right)}{2(1+a^2)(1-2c)^{1/4}\sqrt{2\pi cb_m}}
\end{equation}
as $m\to\infty$.
\item Assume that $b_m >0$, $m\geq 1$.  There exists constants $C,d_1,d_2 >0$ so that
\begin{equation} \label{Ebound}
|E_{B_m+A_m,B_m-A_m} | \leq \frac{C}{\sqrt{b_m}} \mathcal{C}^{2 b_m} \left( e^{-d_1 \frac{a_m^2}{b_m}}+e^{-d_2 b_m} \right)
\end{equation}
for all $m\geq 1$.
\end{enumerate}

\end{lemma}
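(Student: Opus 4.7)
I would prove this by classical saddle-point analysis of the double contour integral defining $E_{k,l}$. First I would reduce to a single integral by evaluating the inner $u_2$-integral via residues. Viewed as a quadratic in $u_2$ after multiplying by $u_2$, the equation $\tilde{c}(u_1,u_2)=0$ reads $a(u_1+u_1^{-1})u_2^2 + 2(1+a^2)u_2 + a(u_1+u_1^{-1}) = 0$. Since the constant and leading coefficients coincide, the two roots $r_{\pm}(u_1)$ satisfy $r_+(u_1) r_-(u_1) = 1$, and for $u_1 \in \Gamma_1$ away from $\pm \mathrm{i}$ one of them, say $r_-(u_1)$, lies strictly inside the unit disk. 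Collapsing the $u_2$-contour onto $u_2 = r_-(u_1)$ (with the residue at $u_2=0$ handled separately when the sign of $l$ requires it) one obtains
\begin{equation}
E_{B_m+A_m, B_m-A_m} = \frac{1}{2\pi \mathrm{i}} \int_{\Gamma_1} \frac{1}{a(u_1+u_1^{-1})\bigl(r_+(u_1)-r_-(u_1)\bigr)} \cdot \frac{r_-(u_1)^{A_m-B_m}}{u_1^{A_m+B_m+1}} \, du_1,
\end{equation}
up to sign conventions on contour orientations.

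Next I would isolate the exponential structure. Writing the power factor as $\exp\bigl(B_m \Phi(u_1) + A_m \Psi(u_1)\bigr)$ with $\Phi(u_1) = -\log u_1 - \log r_-(u_1)$ and $\Psi(u_1) = \log r_-(u_1) - \log u_1$, the saddle equation $\Phi'(u_1) = 0$ becomes, after using the implicit formula for $r_-$, a polynomial equation in $u_1$ whose two relevant roots $u_1^{\ast} = \pm \mathrm{i}\, c_{\ast}$ lie on $\Gamma_1$ (with $c_{\ast}$ determined by $c$ via the parametrisation underlying \eqref{eq:scalingparameters}). A direct computation shows $|e^{\Phi(u_1^{\ast})}| = \mathcal{C}^2$, which produces the prefactor $\mathcal{C}^{2b_m}$, while $\Phi''(u_1^{\ast})$ evaluates to a constant matching the exponent $-\tfrac{\sqrt{1-2c}}{2c}$ in the Gaussian correction. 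The phases contributed by the two conjugate saddle points combine into the overall sign $(-1)^{a_m+b_m}$.

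For part (1), I would deform $\Gamma_1$ to the steepest descent contour through both saddles and apply a standard Laplace expansion. The hypothesis $|a_m| \leq b_m^{7/12}$ is precisely what is needed for the shift of the effective saddle induced by $A_m \Psi$ to remain within a neighbourhood of scale $O(b_m^{-1/2+\eps})$ on which the cubic remainder of $\Phi$ is controlled, producing the multiplicative error $O(b_m^{-1/4})$; the contribution of the contour outside this neighbourhood is bounded by $\mathcal{C}^{2b_m} O(e^{-d_1 b_m^{1/6}})$, giving the second error term. For part (2), the same steepest descent contour yields a direct modulus estimate after absorbing the $A_m \Psi$ term by a small perturbation of the contour, producing the decay $e^{-d_1 a_m^2/b_m} + e^{-d_2 b_m}$ uniformly in $b_m > 0$; when $|a_m|$ is of the same order as $b_m$ the effective saddle moves away from $\pm \mathrm{i} c_{\ast}$, but the modulus bound persists because $|e^{\Phi + (a_m/b_m)\Psi}|$ is still minimised along the shifted contour. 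The main obstacle is the explicit algebraic bookkeeping: identifying the saddles $\pm \mathrm{i} c_{\ast}$, evaluating $r_-$ and its first two derivatives there, and verifying that the resulting constants match $\mathcal{C}$ and $c=a/(1+a^2)$ as in \eqref{G} and \eqref{eq:parameterc}; once these identifications are secured, the remainder is routine Laplace asymptotics.
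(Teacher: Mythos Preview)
The paper does not give its own proof of this lemma: it is quoted verbatim as Lemmas~4.6 and~4.7 of~\cite{CJ:16}, with no argument supplied here. So there is nothing in the present paper to compare your sketch against beyond the citation.

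That said, your strategy---collapse the $u_2$-contour onto the root $r_-(u_1)$ of $\tilde c(u_1,\cdot)=0$ inside the unit disk, then run a steepest-descent analysis on the resulting single integral with phase $B_m\Phi+A_m\Psi$---is the standard and correct route, and is in fact the approach taken in~\cite{CJ:16}. A couple of small inaccuracies to watch if you carry it out in full: the critical points of $\Phi$ are $u_1^\ast=\pm\mathrm{i}$ (on $\Gamma_1$ itself), not $\pm\mathrm{i}c_\ast$ for some other modulus; it is at $u_1=\pm\mathrm{i}$ that $r_-(u_1)$ hits the unit circle and the product $u_1 r_-(u_1)$ has modulus $\mathcal{C}^2$, which is what produces the prefactor $\mathcal{C}^{2b_m}$. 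Also, your residue formula has the sign of $r_+-r_-$ flipped relative to what the integration actually gives. These are bookkeeping issues rather than gaps in the method; once corrected, the Laplace expansion proceeds exactly as you describe, with the $7/12$ exponent governing the window in which the $A_m$-perturbation stays subordinate to the quadratic part of $\Phi$ and yielding the stated $O(b_m^{-1/4})$ and $O(e^{-d_1 b_m^{1/6}})$ error terms.
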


Motivated by~\eqref{KinverseE} and~\eqref{Easymptotics}, we define
\begin{equation}
A_{m,i}=\frac{\mathtt{k}_i-\mathtt{l}_i}{2} \hspace{5mm}\mbox{and}  \hspace{5mm}B_{m,i}=\frac{\mathtt{k}_i+\mathtt{l}_i}{2}
\end{equation}
for $i \in \{1,2\}$.  It follows from~\eqref{kl} that
\begin{equation}\label{AmBmxy}
\begin{split}
2 A_{m,i}&=\frac{x_1(z')+x_2(z')-(y_1(z)+y_2(z))}{2} -(-1)^{i} h(\eps(z),\eps(z')) \\
2 B_{m,i}&=\frac{x_2(z')-x_1(z')+(y_1(z)-y_2(z))}{2} +(-1)^{i} (1-h(\eps(z),\eps(z'))). \\
\end{split}
\end{equation}
If we have $z \in \mathcal{L}_m(q,k)$, $z' \in \mathcal{L}_m(q',k')$, $t=t(z)$, $t'=t(z')$, $\eps=\eps(z)$ and $\eps'=\eps(z')$, then using~\eqref{tz} and~\eqref{xyz}
\begin{equation}\label{AmBmt}
\begin{split}
2 A_{m,i}&=2(t'-\tau_m(q')) -2(t-\tau_m(q))+2(\eps-\eps')-(-1)^ih(\eps,\eps')\\
2 B_{m,i}&= \beta_m(q,k)-\beta_m(q',k')+\eps+\eps' -1 +(-1)^i (1-h(\eps,\eps'))
\end{split}
\end{equation}
We are now ready for the proof of Proposition~\ref{Asymptotics}.

\begin{proof}[Proof of Proposition~\ref{Asymptotics}]
To prove part (1) in the statement of the proposition, we apply Theorem~\ref{Airyasymptotics}. By comparing~\eqref{tz} and~\eqref{xyscaling}, $y=y(z)$ we see that
\begin{equation}
\alpha_y =\frac{t-\tau_m(q)}{\lambda_1(2m)^{1/3}}, \hspace{5mm}\mbox{and} \hspace{5mm} \beta_y=\beta_q,
\end{equation}
if $z \in \mathcal{L}_m(q,k), t=t(z)$, where we have disregarded integer parts.  Thus, we have
\begin{equation} \label{g1formula}
\alpha_y \beta_y +\frac{2}{3} \beta_y^3 = \frac{t}{\lambda_1 (2m )^{1/3}} \beta_q -\frac{1}{3} \beta_q^3 =\gamma_1(z)
\end{equation}
by~\eqref{gi}.  Using~\eqref{Ktilde},~\eqref{Kmdelta} and~\eqref{g1formula}, we see that part (1) in the statement of the proposition follows from~\eqref{As1}.   Similarly, part (2) in the statement of the proposition follows from~\eqref{As2}.

We now consider part (3) in the statement of the  proposition, that is $q=q'$, $k>k'$. From~\eqref{AmBmt} and the definition of $\beta_m(q,k)$ we see that
\begin{equation}
B_{m,i}=(k-k') \lambda_2 (\log m)^2 +\frac{1}{2} (\eps+\eps' +(-1)^i (1-h(\eps_1,\eps_2)))
\end{equation} 
so $B_{m,i}>0$ if $m$ is sufficiently large.  Also,
\begin{equation}
A_{m,i}=t'-t+\eps-\eps',
\end{equation}
since $t_m(q')=t_m(q)$.  Assume now that $|t'-t| \leq c_2 ((k-k')(\log m)^2)^{7/12}$.  Then, $b_{m,i}=|B_{m,i}|$ and
\begin{equation}
|a_{m,i}| = |A_{m,i}| \leq b_{m,i}^{7/12},
\end{equation}
for large $m$ if $c_2 <1$.  By~\eqref{Easymptotics} 
\begin{equation}
E_{\mathtt{k}_i,\mathtt{l}_i}= \frac{(-1)^{k_i} \mathcal{C}^{b_{m,i}} }{2(1+a^2) (1-2c)^{1/4} \sqrt{2 \pi c b_{m,i}}} \left( e^{-\frac{\sqrt{1-2c}}{2c} \frac{A_{m,i}^2}{B_{m,i}}} (1+O(b_{m,i}^{-1/4})) +O(e^{-d_1 b_{m,i}^{1/6}}) \right).
\end{equation}
Note that
\begin{equation}
-\frac{\sqrt{1-2c}}{2c} \frac{A_{m,i}^2}{B_{m,i}}= -\frac{\lambda_1^2 \sqrt{1-2c}}{2c \lambda_2(k-k')} \left( \frac{t'}{\lambda_1\log m}- \frac{t}{\lambda_1\log m} \right)^2+o(1)
\end{equation}
and that
\begin{equation} \label{g1est}
|\gamma_1(z')-\gamma_1(z)| = \left| \frac{t'-t}{\lambda_1 (2m)^{1/3}} \beta_q \right| \leq C 
\end{equation}
since $|t'-t|\leq C m^{1/3}$.
We can now use~\eqref{Ktilde},~\eqref{Ktilde2},~\eqref{KinverseE} and proceed as in the proof of~\cite[Proposition 3.4]{CJ:16} and this will give part (3)(a) in the proposition.

We turn now to part (3)(b) in the proposition. Consider~\eqref{Ktilde2} and note that
\begin{equation} 
\mathcal{C}^{\gamma_2(z')-\gamma_2(z)+2-2\eps'} =\mathcal{C}^{\frac{1}{2}(x_1(z')-x_2(z')+y_2(z)-y_1(z)+2)} =\mathcal{C}^{-2 B_{m,i} +1-(-1)^i (1-h(\eps,\eps'))}
\end{equation}
by~\eqref{xyg2} and~\eqref{AmBmxy}.  We can now use~\eqref{Ebound} to get
\begin{equation}
|E_{\mathtt{k}_i,\mathtt{l}_i}| \leq \frac{C}{\sqrt{b_{m,i}}} \mathcal{C}^{2 b_{m,i}} (e^{-d_1 \frac{a_{m,i}^2}{b_{m,i}}} + e^{-d_2 b_{m,i}} ).
\end{equation}
If $(c_2 ((k-k')) (\log m)^2)^{7/12} \leq |t'-t| \leq \lambda_2 (k-k')(\log m)^2$, then $b_{m,i}=B_{m,i}$.  The estimate~\eqref{g1est} holds and combining these facts, we obtain the bound in (3)(b) in the statement of the proposition.  

If $|t'-t| \geq \lambda_2 (k-k')(\log m)^2$, then
\begin{equation}
\begin{split}
b_{m,i}& = |A_{m,i}| =|t'-t| +O(1) \\
a_{m,i}& = B_{m,i} = (k-k')\lambda_2 (\log m)^2 +O(1).
\end{split}
\end{equation}
It follows, since $|b_{m,2}-b_{m,1}|$ and $|a_{m,2}-a_{m,1}|$ are bounded, that 
\begin{equation}\label{Kmzerobound}
|\mathcal{K}_{m,0} (z,z')| \leq C \mathcal{C}^{2(b_{m,1}-B_{m,1})} e^{c_1 \frac{-a_{m,1}^2}{b_{m,1}}}.
\end{equation}
If $\lambda_2 (k-k')(\log m)^2 \leq |t'-t| \leq 2 \lambda_2 (k-k')(\log m)^2$, we can use $\mathcal{C} <1$ and $b_{m,1}-B_{m,1} \geq 0$, to get
\begin{equation}
|\mathcal{K}_{m,0} (z,z')| \leq  e^{-c_1 (k-k')(\log m)^2}.
\end{equation}
If $|t'-t|>2 \lambda_2 (k-k')(\log m)^2$, we use $\mathcal{C}<1$ to get
\begin{equation}
|\mathcal{K}_{m,0}(z,z')| \leq C \mathcal{C}^{2(b_{m,1} -B_{m,1})} \leq C\mathcal{C}^{2(k-k')\lambda_2 (\log m)^2} \leq Ce^{-c_1(k-k')(\log m)^2}
\end{equation}
with an appropriate $c_1>0$.   In either case, we have shown (3)(c) in the statement of the proposition.

Consider now the case (4) in the statement of the proposition.  In this case, $B_{m,i}<0$ and we see that the factor 
\begin{equation}
\mathcal{C}^{2(b_{m,i}-B_{m,i})} \label{Gifactor}
\end{equation}
in \eqref{Kmzerobound} will give us the decay we need in order to prove the bound in statement (4) of the proposition. 

Finally, we consider statement (5) in the proposition, that is $q=q'$ and $k=k'$.  Then, we have
\begin{equation}
B_{m,i} = \eps +\eps' -1 +(-1)^i (1-h(\eps,\eps')),
\end{equation}
and
\begin{equation}
A_{m,i} =t' -t +\eps-\eps'.
\end{equation}
Thus,  if $|t' -t|$ is sufficiently large, then $b_{m,i}=|A_{m,i}|$ and $a_{m,i}=B_{m,i}$.  Since $|B_{m,i}| \leq 2$, 
\begin{equation}
2 (b_{m,i}-B_{m,i}) \geq 2 (|t'-t|-2)
\end{equation}
and again the factor in~\eqref{Gifactor} gives the desired bound.

\end{proof}
\bibliographystyle{plain}

\bibliography{ref}

\end{document}